\numberwithin{equation}{section}
\newtheorem{cor}[equation]{Corollary}
\newtheorem{lem}[equation]{Lemma}
\newtheorem{thm}[equation]{Theorem}
\theoremstyle{definition}
\newtheorem{dfn}[equation]{Definition}
\newtheorem{rem}[equation]{Remark}
\def\R{\mathbb R}
\def\Z{\mathbb Z}
\def\ve{\varepsilon}
\def\vf{\varphi}
\DeclareMathOperator{\arsinh}{arsinh}
\DeclareMathOperator{\artanh}{artanh}
\DeclareMathOperator{\ess}{ess}
\DeclareMathOperator{\inrad}{inrad}
\DeclareMathOperator{\supp}{supp}
\DeclareMathOperator{\sys}{sys}
\DeclareMathOperator{\Sys}{Sys}
\begin{document}


\title{Small eigenvalues of surfaces - old and new}
\author{Werner Ballmann}
\address
{WB: Max Planck Institute for Mathematics,
Vivatsgasse 7, 53111 Bonn
and Hausdorff Center for Mathematics,
Endenicher Allee 60, 53115 Bonn}
\email{hwbllmnn\@@mpim-bonn.mpg.de}
\author{Henrik Matthiesen}
\address
{HM: Max Planck Institute for Mathematics,
Vivatsgasse 7, 53111 Bonn,
and Hausdorff Center for Mathematics,
Endenicher Allee 60, 53115 Bonn}
\email{hematt\@@mpim-bonn.mpg.de}
\author{Sugata Mondal}
\address
{SM: Indiana University,
Rawles Hall, 831 E 3rd Street,
Bloomington, Indiana}
\email{sumondal\@@iu.edu}

\thanks{\emph{Acknowledgments.}
We would like to thank Bram Petri and Federica Fanoni for pointing out the results of \cite{Pa} to us.
We are grateful to the Max Planck Institute for Mathematics, the Hausdorff Center for Mathematics, and Indiana University at Bloomington for their support and hospitality.}

\date{\today}

\subjclass{58J50, 35P15, 53C99}
\keywords{Laplace operator, small eigenvalue, analytic systole}

\begin{abstract}
We discuss our recent work on small eigenvalues of surfaces.
As an introduction, we present and extend some of the by now classical work of Buser and Randol
and explain novel ideas from articles of S\'evennec, Otal, and Otal-Rosas which are of importance in our line of thought.
\end{abstract}

\maketitle

\tableofcontents

\section{Introduction}
\label{intro}
Every conformal class of a connected surface
S contains a complete Riemannian metric with constant curvature $K$.
This metric is unique up to scale.
We say that it is \emph{hyperbolic} if $K=-1$. 
In particular, any conformal class on the orientable closed surface $S_\gamma$ of genus $\gamma\ge2$ contains a unique hyperbolic metric.
For this reason, hyperbolic metrics on $S_\gamma$ play a specific role.

Traditionally, an eigenvalue of a hyperbolic metric on $S_\gamma$ is said to be \emph{small} if it is below $1/4$.
This terminology may have occurred for the first time in \cite[page 386]{Hu}.
The importance of the number $1/4$ stems from the fact that it is the bottom of the spectrum of the hyperbolic plane.

In \cite[1972]{McK}, McKean stated erroneously that hyperbolic metrics on $S_\gamma$ do not carry non-trivial small eigenvalues.
This was corrected by Randol in \cite[1974]{Ra} who showed the existence of arbitrarily many small eigenvalues.

\begin{thm}[Randol]\label{iran}
For each hyperbolic metric on $S_\gamma$ and natural number $n\ge1$,
there is a finite Riemannian covering $\tilde S\to S_\gamma$ such that $\tilde S$ has at least $n$ eigenvalues in $[0,1/4)$.
\end{thm}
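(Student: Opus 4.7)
The plan is to build a finite cyclic covering dual to a non-separating simple closed geodesic and use the associated harmonic function on the cover to produce many low-energy test functions. Choose a non-separating simple closed geodesic $c$ on $S_\gamma$ (one exists since $\gamma\ge2$), and let $\omega$ be the harmonic one-form on $S_\gamma$ representing the Poincar\'e dual of $[c]$; its periods are integers. Intersection with $c$ defines a homomorphism $\pi_1(S_\gamma)\to\Z$, whose reduction modulo $d$ gives a $d$-fold cyclic covering $\pi\colon\tilde S_d\to S_\gamma$ with deck group $\Z/d$ generated by some $T$. All periods of $\pi^*\omega$ on $\tilde S_d$ lie in $d\Z$, so $\pi^*\omega=dh$ for a smooth function $h\colon\tilde S_d\to\R/d\Z$ satisfying $h\circ T=h+1$.

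For $k=1,\dots,d-1$ consider the test functions $f_k=\exp(2\pi ikh/d)$. The relation $f_k\circ T=e^{2\pi ik/d}f_k$ puts distinct $f_k$ in pairwise distinct isotypic components of the deck action, so they are $L^2$-orthogonal. Since $|dh|^2=|\pi^*\omega|^2$ is deck-invariant, the same isotypic computation combined with $\sum_{j=0}^{d-1}e^{2\pi imj/d}=0$ for $0<m<d$ yields
\[
\int_{\tilde S_d}e^{2\pi i(k-l)h/d}\,|dh|^2=0\quad\text{for }k\neq l,
\]
which upgrades the orthogonality to the Dirichlet form. Together with $\int_{\tilde S_d}|dh|^2=d\,\|\omega\|_2^2$ and $\int_{\tilde S_d}|f_k|^2=d\,\area(S_\gamma)$ this gives the Rayleigh quotient
\[
R(f_k)=\frac{\int|df_k|^2}{\int|f_k|^2}=\frac{4\pi^2 k^2\,\|\omega\|_2^2}{d^2\,\area(S_\gamma)}.
\]

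Given $n\ge 1$, set $K=\lceil(n-1)/2\rceil$ and choose $d$ large enough that $R(f_K)<1/4$; this is possible since $R(f_K)$ decays like $(K/d)^2$. Taking real and imaginary parts of $f_1,\dots,f_K$ and adjoining the constant then produces $2K+1\ge n$ real test functions that are mutually orthogonal in both $L^2$ and the Dirichlet form and on which the Rayleigh quotient stays strictly below $1/4$. The min-max principle therefore guarantees at least $n$ eigenvalues of $\tilde S_d$ in $[0,1/4)$. The one substantive step is the displayed orthogonality identity, which rests only on the deck-invariance of $|dh|^2$ and the vanishing of non-trivial sums of roots of unity; the existence of $c$ (and hence of a non-zero dual harmonic form $\omega$) is the sole structural ingredient, and the remainder is a direct Rayleigh quotient estimate.
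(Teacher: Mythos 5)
Your proof is correct and follows a genuinely different route from the paper's. In the proof of \cref{ran}, the cyclic cover attached to a non-separating simple closed geodesic $c$ is built by direct surgery: cut $S_\gamma$ along $c$, chain $(k+2)n$ copies of the cut surface together, and close up the loop. The test functions there are piecewise-linear bump functions with pairwise disjoint supports, each equal to $1$ on the middle $k$ blocks of a chain and decaying linearly across the collar of $c$ at the two ends; disjoint supports make orthogonality immediate, and the collar theorem (\cref{coll}) supplies the collar width and hence the Rayleigh quotient bound. You instead realize the cyclic cover as the one associated to the mod-$d$ reduction of the intersection homomorphism dual to $[c]$, integrate the pullback of the harmonic $1$-form $\omega$ Poincar\'e dual to $[c]$ to a primitive $h\colon\tilde S_d\to\R/d\Z$, and take the smooth oscillatory test functions $\exp(2\pi ikh/d)$. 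These overlap, so orthogonality in $L^2$ and in the Dirichlet form has to be extracted from the $\Z/d$-isotypic decomposition under the deck action, which you do correctly, yielding the closed form $R(f_k)=4\pi^2k^2\|\omega\|_2^2/(d^2\area(S_\gamma))$. Notably your argument uses neither hyperbolicity nor the specific number $1/4$: it gives, for any Riemannian metric on a closed surface of positive genus, arbitrarily many eigenvalues below any prescribed threshold in a suitable cyclic cover. The paper's version, in exchange, produces a bound explicit in the geodesic length $\ell$ and the area, which is what feeds into the quantitative estimate on the covering degree in \cref{rancor}. Both routes are elementary and bypass Randol's original use of the Selberg trace formula.
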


The proof of Randol uses Selberg's trace formula. 
Later, Buser observed that geometric methods lead to an elementary construction of hyperbolic metrics on $S_\gamma$ with many arbitrarily small eigenvalues \cite[1977]{Bu1}.

\begin{thm}[Buser]\label{ibus}
For any $\gamma\ge2$ and $\ve>0$, there are hyperbolic metrics on $S_\gamma$ with $2\gamma-2$ eigenvalues in $[0,\ve)$.
\end{thm}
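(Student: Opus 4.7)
The plan is to combine a pants decomposition of $S_\gamma$ with the collar lemma and the min-max characterization of eigenvalues to construct $2\gamma-2$ linearly independent test functions whose Rayleigh quotients can be made arbitrarily small.

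First I would fix a topological pants decomposition of $S_\gamma$: a family of $3\gamma-3$ disjoint simple closed curves cutting $S_\gamma$ into $2\gamma-2$ three-holed spheres $P_1,\dots,P_{2\gamma-2}$. Using Fenchel--Nielsen coordinates, I choose a hyperbolic metric on $S_\gamma$ in which every cuff geodesic has the same length $\ell$, where $\ell>0$ is a small parameter to be determined. The collar lemma then provides, around each cuff geodesic $c$ of length $\ell$, an embedded tubular neighborhood of half-width $w(\ell)=\operatorname{arsinh}(1/\sinh(\ell/2))\sim\log(1/\ell)$, parametrized by Fermi coordinates $(t,s)$ with $t\in[-w(\ell),w(\ell)]$ and $s\in\R/\ell\Z$, and with metric $dt^2+\cosh^2(t)\,ds^2$.

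Next, for each pair of pants $P_i$ I build a test function $f_i$ as follows: set $f_i\equiv 1$ on the part of $P_i$ lying outside the half-collars $\{0\le t\le w_0\}$ of its three cuffs (with $w_0\le w(\ell)$ a fixed small constant), set $f_i\equiv 0$ outside $P_i$ and inside the opposite half-collars, and inside the half-collars on the $P_i$ side let $f_i$ depend only on the Fermi coordinate $t$, interpolating between $1$ at $t=w_0$ and $0$ at $t=0$. The optimal radial profile satisfies $(\cosh t\,f')'=0$, giving $f(t)=\bigl(2\arctan\sinh t\bigr)/\bigl(2\arctan\sinh w_0\bigr)$ and Dirichlet energy per cuff equal to $\ell/(2\arctan\sinh w_0)$. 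Because the three half-collars attached to $P_i$ have area $\ell\sinh w_0$ each, which is $O(\ell)$, the $L^2$-norm $\int f_i^2$ is bounded below by $\operatorname{area}(P_i)-O(\ell)=2\pi-O(\ell)$, so the Rayleigh quotient satisfies $R(f_i)\le C\ell$ for some constant depending only on $w_0$.

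Since the functions $f_1,\dots,f_{2\gamma-2}$ have pairwise disjoint supports, they are linearly independent and, for any linear combination $f=\sum c_i f_i$, one has $\int|\nabla f|^2=\sum c_i^2\int|\nabla f_i|^2$ and $\int f^2=\sum c_i^2\int f_i^2$, so $R(f)\le\max_i R(f_i)\le C\ell$. Thus the span $V$ is a $(2\gamma-2)$-dimensional subspace of $H^1(S_\gamma)$ on which the Rayleigh quotient is at most $C\ell$. The min-max principle
\[
\lambda_{k}=\inf_{\dim V=k+1}\,\sup_{0\ne f\in V}\frac{\int|\nabla f|^2}{\int f^2}
\]
then yields $\lambda_{2\gamma-3}\le C\ell$, so choosing $\ell$ so small that $C\ell<\ve$ produces the desired hyperbolic metric with $\lambda_0,\dots,\lambda_{2\gamma-3}$ all in $[0,\ve)$.

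The main obstacle is the balance, inside the collars, between the energy cost of the transition and the area available: picking the profile and the half-collar width $w_0$ so that the energy tends to $0$ while the thick part retains a fixed fraction of the pants' area. The collar lemma is precisely what decouples these two requirements, which is why the argument works for \emph{all} $3\gamma-3$ cuff curves simultaneously and gives the sharp count $2\gamma-2$.
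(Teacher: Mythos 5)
Your proposal is correct and follows essentially the same route as the paper's proof of (the more general) \cref{bus}: a pants decomposition, test functions equal to $1$ on the thick part of each pair of pants and decaying to $0$ across collars around the cuffs, pairwise disjoint supports, and the variational (min-max) characterization of eigenvalues, with the Rayleigh quotient of order $\ell$. The only cosmetic difference is that the paper uses the simpler linear decay profile over a fixed width $1$ (giving Rayleigh quotient $\le \sinh(1)\sum\ell/(2\pi-\sinh(1)\sum\ell)$), whereas you solve the Euler--Lagrange equation $(\cosh t\,f')'=0$ for the optimal radial profile; both give $R(f_i)=O(\ell)$ and the argument is otherwise identical.
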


The construction of Buser relies on a pairs of pants decomposition of $S_\gamma$, where the boundary geodesics of the hyperbolic metrics on the pairs of pants are sufficiently short.
Schoen--Wolpert--Yau \cite[1980]{SWY} generalized Buser's result and estimated low eigenvalues in terms of decompositions of $S_\gamma$,
where eigenvalues are counted according to their multiplicity.

\begin{thm}[Schoen--Wolpert--Yau]\label{iswy}
For any Riemannian metric on $S_\gamma$ with curvature $-1\le K\le-k<0$, the $i$-th eigenvalue satisfies
\begin{align*}
  \alpha_1k^{3/2}\ell_i\le\lambda_i\le \alpha_2\ell_i \quad\text{for}\quad 0<i<2\gamma-2 \quad\text{and}\quad \alpha_1k\le\lambda_{2\gamma-2}\le \alpha_2,
\end{align*}
where $\alpha_1,\alpha_2>0$ depend only on $\gamma$ and $\ell_i$ is the minimal possible sum of the lengths of simple closed geodesics in $S_\gamma$ which cut $S_\gamma$ into $i+1$ pieces.
\end{thm}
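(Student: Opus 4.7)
The plan is to prove the upper and lower bounds separately, following Buser's test-function philosophy for the upper bound and a Cheeger-type isoperimetric argument refined by the collar structure for the lower bound.

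For the upper bound $\lambda_i \le \alpha_2 \ell_i$, I would fix a decomposition of $S_\gamma$ into $i+1$ pieces $P_0, \ldots, P_i$ by disjoint simple closed geodesics of total length $\ell_i$ and build $i+1$ Lipschitz test functions $f_j$, each equal to $1$ on the bulk of $P_j$, vanishing on the complement, and interpolated linearly across a tubular collar of fixed width around the cutting geodesics. Then $\int |\nabla f_j|^2 \le C \ell_i$, while $\|f_j\|_{L^2}^2$ is bounded below in terms of $\gamma$ (using Gauss--Bonnet to control $\area(S_\gamma)$). The $i$ non-constant linear combinations of the $f_j$ span an $i$-dimensional subspace orthogonal to the constants, and the min--max principle gives the claimed estimate. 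The bound $\lambda_{2\gamma-2} \le \alpha_2$ follows from the same construction applied to a Bers pants decomposition of $S_\gamma$, whose total length is universally bounded in terms of $\gamma$, together with one auxiliary test function providing the additional dimension.

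The lower bound is considerably more subtle. A direct Cheeger-type argument via a multi-way isoperimetric constant $h_i = \inf \max_j |\partial A_j|/|A_j|$ yields $\lambda_i \ge c h_i^2$, which combined with $h_i \le C \ell_i$ produces only $\lambda_i \ge c' \ell_i^2$; this is too weak when $\ell_i$ is small. The sharper linear bound, with curvature factor $k^{3/2}$, requires a localized argument around each cutting geodesic. Specifically, the collar lemma in curvature $K \le -k$ provides, around any short simple closed geodesic of length $\ell$, a tubular collar of width at least a constant times $k^{-1/2} \log(1/\ell)$; a one-dimensional Poincar\'e-type inequality across such a collar, together with the Gauss--Bonnet area bound $\area(S_\gamma) \le 4\pi(\gamma-1)/k$, yields the $k^{3/2}$ factor after summing over all cutting geodesics and converting the Rayleigh-quotient information into a lower bound on $\lambda_i$. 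The isolated estimate $\lambda_{2\gamma-2} \ge \alpha_1 k$ comes from the topological obstruction that $S_\gamma$ cannot be cut by simple closed geodesics into more than $2\gamma - 2$ essential pieces, combined with a Neumann eigenvalue lower bound on each pair of pants that is linear in $k$.

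The main obstacle, I expect, is producing the linear-in-$\ell_i$ lower bound: Cheeger's inequality alone gives only a quadratic estimate, and extracting the sharp linear dependence with the correct curvature exponent requires a delicate combination of Poincar\'e-type estimates on each collar with the global geometric constraints imposed by Gauss--Bonnet.
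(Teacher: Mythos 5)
The paper does not prove \cref{iswy}: it is stated in the introduction as background and is cited directly from Schoen--Wolpert--Yau \cite{SWY}, so there is no in-paper proof to compare your attempt against. With that caveat, a few comments on the proposal itself.

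Your upper-bound sketch is the standard one and is essentially right in spirit, but it has a gap where you assert that $\|f_j\|_{L^2}^2$ is ``bounded below in terms of $\gamma$ (using Gauss--Bonnet).'' Gauss--Bonnet bounds the total area of each piece $P_j$ from below (each $P_j$ has $\chi(P_j)\le -1$ and $K\ge -1$, so $|P_j|\ge 2\pi$), but your test function $f_j$ is supported on the bulk of $P_j$ away from the collars, and when boundary geodesics are very short the collars become very wide. One must check that the collar area stays uniformly bounded (for $K\ge -1$ the collar of a geodesic of length $\ell$ has area comparable to $\ell/\sinh(\ell/2)$, which is bounded as $\ell\to 0$), so that the bulk still has area bounded below. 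Without this, the claimed denominator control fails. Also, since the $f_j$ have essentially disjoint supports it is cleaner to apply min--max directly to the $(i+1)$-dimensional span of $f_0,\dots,f_i$ rather than to ``the $i$ non-constant linear combinations orthogonal to constants''; the latter phrasing obscures the estimate.

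For the lower bound you correctly identify that a raw higher-order Cheeger inequality only gives $\lambda_i\gtrsim \ell_i^2$ and that a linear estimate requires exploiting the collar structure. But the central step -- passing from a small $i$-th eigenvalue to a decomposition of the surface by short geodesics, i.e.\ producing the comparison $\lambda_i\gtrsim k^{3/2}\ell_i$ -- is precisely the hard content of \cite{SWY}, and in your proposal it is asserted rather than argued. ``A one-dimensional Poincar\'e-type inequality across each collar, summed and converted into a lower bound on $\lambda_i$'' does not by itself yield a bound on the $i$-th eigenvalue; one needs a mechanism (in \cite{SWY} this is a careful min--max/co-area analysis using the level sets of a hypothetical eigenfunction together with the collar geometry) for extracting from an $i$-dimensional family of test functions a genuine geodesic decomposition of $S_\gamma$ into $i+1$ pieces of small total boundary. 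Similarly, the isolated bound $\lambda_{2\gamma-2}\ge \alpha_1 k$: the topological fact that one cannot cut $S_\gamma$ by disjoint simple closed geodesics into more than $2\gamma-2$ pieces is correct (disjoint simple closed geodesics are pairwise non-homotopic, so at most $3\gamma-3$ of them), but the claim that this, ``combined with a Neumann eigenvalue lower bound on each pair of pants that is linear in $k$,'' finishes the argument is again the nontrivial step; you would need to explain how spectral information on the individual pieces is glued to a bound for $\lambda_{2\gamma-2}$ on all of $S_\gamma$, which is not automatic.

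In short, the outline is sensible and the scaling heuristic giving the $k^{3/2}$ exponent is correct, but as written the proposal leaves the genuinely difficult implication (small eigenvalue $\Rightarrow$ short decomposing geodesics, with the correct curvature exponent) unproved. If you want a proof you can actually carry out in detail, \cite{SWY} and the extension in \cite{DPRS} are the references to follow.
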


Dodziuk--Pignataro--Randol-Sullivan \cite[1987]{DPRS} extended \cref{iswy} to complete hyperbolic metrics on the orientable surfaces $S_{\gamma,p}$ of genus $\gamma$ with $p$ punctures (in the sense of \cref{secsu} below).

Whereas the left inequalities in \cref{iswy} show the necessity of short simple closed geodesics for the existence of small eigenvalues,
the inequality for $\lambda_{2\gamma-2}$ on the right indicates that this eigenvalue plays a different role.
Indeed, Schmutz showed that $\lambda_2\ge1/4$ for any hyperbolic metric on $S_2$ \cite[1991]{S2}.
Furthermore, Buser showed in \cite[1992]{Bu2} that $\lambda_{2\gamma-2}>\alpha>0$ for any hyperbolic metric on $S_\gamma$, where $\alpha$ does not depend on $\gamma$.

Inspired by the above results and presumably also by their previous estimates $\lambda_{4\gamma-2}>1/4$ (Buser \cite[1977]{Bu1}) and $\lambda_{4\gamma-3}>1/4$ (Schmutz \cite[1990]{S1}),
Buser and Schmutz conjectured that $\lambda_{2\gamma-2}\ge1/4$ for any hyperbolic metric on $S_\gamma$.
The development so far is what we refer to as old in our title,
and our presentation of it is trimmed towards our needs.
The new development starts with the work of Otal and Rosas, who proved the following strengthened version of the Buser-Schmutz conjecture in \cite[2009]{OR},
using ideas from S\'evennec \cite[2002]{Se} and Otal \cite[2008]{Ot}.

\begin{thm}[Otal-Rosas]\label{ior}
For any real analytic Riemannian metric on $S_\gamma$ with negative curvature, we have $\lambda_{2\gamma-2}>\lambda_0(\tilde S)$, where $\lambda_0(\tilde S)$ denotes the bottom of the spectrum of the universal covering surface $\tilde S$ of $S$, endowed with the lifted metric. 
\end{thm}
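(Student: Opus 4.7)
The plan is to argue by contradiction: assume $\lambda_{2\gamma-2}(S)\le\lambda_0(\tilde S)$ and reach a contradiction by an Euler-characteristic count applied to the nodal domains of functions in the span of the first $2\gamma-1$ eigenfunctions. Let $V\subset L^2(S)$ denote that span, so $\dim V=2\gamma-1$ and every $f\in V$ satisfies the Rayleigh-quotient bound $R(f)=\int_S|\nabla f|^2/\int_S f^2\le\lambda_0(\tilde S)$. Real analyticity of the metric ensures that the nodal set $Z(f)$ of any nonzero $f\in V$ is semi-analytic, so the collection of nodal domains $\{\Omega_i(f)\}$ is a finite decomposition of $S$ with piecewise smooth boundary and well-defined topological type.

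The central analytic ingredient is a \emph{Dirichlet lemma} of Sévennec--Otal type: for every open connected $\Omega\subsetneq S$ that is topologically trivial in $S$ — meaning that the inclusion $\Omega\hookrightarrow S$ is $\pi_1$-trivial (a topological disk) or factors through a contractible subset (a trivial annulus) — the first Dirichlet eigenvalue satisfies $\lambda_1^D(\Omega)>\lambda_0(\tilde S)$. The non-strict version $\lambda_1^D(\Omega)\ge\lambda_0(\tilde S)$ follows by lifting $\Omega$ isometrically to the universal cover $\tilde S$ and applying domain monotonicity against the bottom of the spectrum on $\tilde S$. Upgrading to strict inequality requires ruling out Dirichlet eigenfunctions realizing $\lambda_0(\tilde S)$ on relatively compact pieces of $\tilde S$, which is where the analyticity assumption enters, through unique-continuation for the ground state.

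From the lemma to the contradiction. For any $f\in V\setminus\{0\}$, the identity $\int_S|\nabla f|^2=\sum_i\int_{\Omega_i}|\nabla f|^2\ge\sum_i\lambda_1^D(\Omega_i)\int_{\Omega_i}f^2$ combined with $R(f)\le\lambda_0(\tilde S)$ forces $\min_i\lambda_1^D(\Omega_i)\le\lambda_0(\tilde S)$, so by the Dirichlet lemma at least one nodal domain of $f$ must be topologically essential, contributing $-\chi(\Omega_i)\ge 1$ (or at least a non-trivial loop to the $\pi_1$-content of the decomposition). One then promotes this pointwise-in-$f$ statement to a simultaneous statement about \emph{every} nodal domain of a suitably chosen $f\in V$, and matches it against the additivity bound $\sum_i(-\chi(\Omega_i))\le -\chi(S)=2\gamma-2$ together with a dimension count that extracts from the $(2\gamma-1)$-dimensional space $V$ a function whose nodal topology over-saturates this budget.

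\emph{Main obstacle.} The upgrade from "some nodal domain of $f$ is essential" to "enough nodal domains are essential to break the $\chi$-budget" is the crux. Courant's nodal-domain theorem bounds the number of nodal domains from above, in the wrong direction, so one needs either a topological-degree / Lusternik--Schnirelmann argument on the projective space $\mathbb{P}(V)\cong\mathbb{RP}^{2\gamma-2}$, in the spirit of Sévennec's non-vanishing theorems for vector bundles, or an inductive decomposition (as in Otal--Rosas) cutting $S$ along nodal arcs of a carefully chosen test function and re-applying the dichotomy to the resulting pieces. Establishing the strict inequality in the Sévennec--Otal Dirichlet lemma, rather than the easy non-strict one, is the other main technical burden; this is where the analyticity hypothesis on the metric is genuinely used, both to give unique continuation at the ground state and to ensure that the nodal sets entering the combinatorial argument are sufficiently well behaved.
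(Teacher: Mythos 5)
Your outline correctly identifies the broad skeleton — the span $E$ of eigenfunctions with eigenvalue $\le\lambda_0(\tilde S)$, the Rayleigh-quotient argument that not all nodal domains of $\vf\in E$ can have first Dirichlet eigenvalue $>\lambda_0(\tilde S)$, and the need to close the argument with a Borsuk--Ulam-type theorem on the projective space $\mathbb{P}(E)$. This is indeed the structure of Otal--Rosas, which the paper sketches in \S\ref{subsecor} via S\'evennec's Lemma \ref{borsuk-ulam}. You also correctly flag that going from ``some nodal domain is essential'' to a genuine bound on $\dim E$ is the crux.

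However, there are several genuine gaps. First, your ``Dirichlet lemma'' only treats domains whose inclusion into $S$ is $\pi_1$-trivial (disks and compressible annuli), which do lift to bounded domains in $\tilde S$. But a nodal domain may be an \emph{incompressible} annulus, which has $\chi=0$ and does not lift to $\tilde S$; it lifts only to a cyclic intermediate cover $\hat S$, and the required equality $\lambda_0(\hat S)=\lambda_0(\tilde S)$ is Brooks' theorem on amenable covers. Without this, your conclusion ``$-\chi(\Omega_i)\ge 1$'' is simply false, and the claim cannot feed the Euler characteristic count. Second, and more seriously, you work directly with the decomposition of $S$ by the full nodal set $Z_\vf$, but for $\vf$ a linear combination of eigenfunctions (not a single eigenfunction) this set is not incompressible and may contain contractible loops; the incompressibility that drives the bound $\chi(S)\le\chi(Y_\vf)$ is only available after passing to the derived nodal set $G_\vf$ (deleting components of $Z_\vf$ contained in disks), forming the union $Y_\vf$ of negative-Euler-characteristic components of $S\setminus G_\vf$, decomposing by sign, and then absorbing complementary annuli to define $X^\pm_\vf$. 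All of this preprocessing is absent from your proposal, yet it is exactly what makes the stratification
\begin{equation*}
  \mathcal{S}_i=\{\vf\in S^d\mid\chi(X^+_\vf)+\chi(X^-_\vf)=-i\},\qquad 1\le i\le -\chi(S),
\end{equation*}
well-behaved under small perturbations of $\vf$, which is what the triviality of the double cover over each stratum (and hence the bound $\dim E\le -\chi(S)=2\gamma-2$) hinges on. Third, your speculation about an ``inductive decomposition'' as what Otal--Rosas do is inaccurate; they, following S\'evennec, apply the cohomological Borsuk--Ulam lemma to the $\mathbb{Z}/2$-invariant stratification just described. Finally, a small point of emphasis: for a \emph{bounded} domain $\Omega$ lifting into $\tilde S$, the strict inequality $\lambda_0(\Omega)>\lambda_0(\tilde S)$ follows already from strict domain monotonicity together with simplicity and positivity of the ground state; it does not require unique continuation or analyticity. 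Analyticity enters elsewhere: to guarantee (via \L{}ojasiewicz) that $Z_\vf$ is a locally finite graph so that the combinatorial and isotopy arguments on $G_\vf$, $Y_\vf$, $X^\pm_\vf$ make sense.
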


In his Bachelor thesis \cite[2013]{Mat}, the second named author showed that the assumption on the curvature is superfluous.
In his PhD thesis, the third named author showed that, for hyperbolic metrics on $S_\gamma$, the inequality in \cref{ior} can be sharpened to $\lambda_{2\gamma-2}>\lambda_0(\tilde S)+\delta$, where $\delta>0$ is a constant depending on $\gamma$ and the systole of the metric; see  \cite[2014]{Mo}.

Obviously, the assumption on the real analyticity of the Riemannian metric in \cref{ior} is unpleasant.
At the expense of the strictness of the  inequality, the assumption can be removed, by the density of the space of real analytic Riemannian metrics inside the space of smooth ones.
In \cite[Question 2]{OR}, Otal and Rosas speculate about the possibility of removing the assumption, keeping the strictness of the inequality.

The starting point of our joint work is this last question on the real analyticity of the Riemannian metric.
We could show that the assumption can indeed be removed in the case of closed surfaces with negative Euler characteristic \cite[2016]{BMM1}.
Later we could show the inequality even in the general case of complete Riemannian metrics on surfaces $S$ of finite type (in the sense of \cref{secsu}) \cite[2017]{BMM2}. 
In addition, our inequality $\lambda_{2\gamma-2}>\Lambda(S)$ in these papers improves the inequality of Otal and Rosas in \cite{OR} and also of the third named author in \cite{Mo}.
The new invariant $\Lambda(S)$, the \emph{analytic systole} of the Riemannian metric of the surface, satisfies $\Lambda(S)\ge\lambda_0(\tilde S)$.
Furthermore, for a large class of surfaces, including compact surfaces of negative Euler characteristic, the inequality is strict.
This is the main result of our paper \cite{BMM3}.
A mayor part of this article, Sections \ref{secns} and \ref{secas}, is devoted to explaining the main ideas behind the new developments and to formulate our main results. 

After fixing some notation and discussing some preliminaries in Sections \ref{secsu} and \ref{secan}, we present extensions of \cref{iran} and \cref{ibus} in \cref{secbr}.
In particular, we obtain a quantitative generalization of \cref{iran} by elementary geometric arguments.
In \cref{secns}, we discuss the ideas of S\'evennec, Otal, and Otal-Rosas which were important in our work in \cite{BMM1,BMM2}.

The rather long \cref{secas} discusses our results from \cite{BMM1}-\cite{BMM3}.
First, in \cref{secas_small}, we describe the arguments needed to extend the ideas from \cite{OR} to get the improved bound on the number of small eigenvalues.
We then proceed in \cref{secas_quant} and \cref{secas_qual} to the bounds for the analytic systole obtained in \cite{BMM3}.
Since the arguments for the qualitative bounds of $\Lambda(S)$ are quite involved, we restrict the discussion to compact surfaces, which reduces the technicalities substantially.
 
\section{Preliminaries on surfaces}
\label{secsu}
We say that a surface $S$ is of \emph{finite type} if its boundary $\partial S$ is compact (possibly empty) and its Euler characteristic $\chi(S)>-\infty$.
A surface is of finite type if it is diffeomorphic to a closed surface with a finite number of pairwise disjoint points and open disks removed, so called \emph{punctures} and \emph{holes}, respectively.
A connected surface $S$ of finite type can be uniquely written as a closed orientable surface $S_\gamma$ of genus $\gamma\ge0$ with $p\ge0$ punctures, $q\ge0$ holes, and $0\le r\le2$ embedded M\"obius bands.
Extending our notation, we write $S_{\gamma, p}$, $S_{\gamma, p, q}$, and $S_{\gamma, p, q, r}$ for $S_\gamma$ with $p$ punctures, with $p$ punctures and $q$ holes, and with $p$ punctures, $q$ holes and $r$ embedded M\"obius bands, respectively.
We call $\gamma$ the \emph{genus} of $S_{\gamma,p,q,r}$.
We have \[\chi(S_{\gamma,p,q,r})=2-2\gamma-p-q-r.\]

\begin{rem}\label{pp}
A surface $S$ of finite type with $\chi(S)<0$ admits decompositions into pairs of pants, that is, into building blocks $P$ of the following type:
\begin{enumerate}[label=\arabic*)]
\item\label{3h} a sphere with three holes;
\item\label{2h} a sphere with two holes and one puncture;
\item\label{1h} a sphere with one hole and two punctures; 
\item\label{0h} a sphere with three punctures;
\item\label{1c} a sphere with two holes and an embedded M\"obius band.
\end{enumerate}
Each of these building blocks of $S$ has Euler characteristic $-1$ and circles as boundary components.
Hence $S$ is built of $-\chi(S)$ such blocks, where we need at most $2$ of the kind \ref{1c}
and where a block $P$ of type \ref{0h} occurs if and only if $S=P$. 
\end{rem} 

A Riemannian metric on a surface is called a \emph{hyperbolic metric} if it has constant curvature $-1$.
A surface together with a hyperbolic metric will be called a \emph{hyperbolic surface}.
A connected surface $S$ of finite type admits complete hyperbolic metrics of finite area with closed geodesics as boundary circles if and only if $\chi(S)<0$.
That is, excluded are sphere, projective plane, torus, Klein bottle, disk, and annulus.

\begin{rem}
With respect to any complete hyperbolic metric with closed geodesics as boundary circles, (neighborhoods of) the ends $U$ of a surface $S$ of finite type with $\chi(S)<0$ are of one of the following two types:
\begin{enumerate}[label=\arabic*)]
\item\label{cusp} Cusps: $U=C_\ell=[0,\infty)\times\R/\ell\Z$ with metric $dr^2+e^{-2r}ds^2$.
\item\label{funnel} Funnels: $U=F_\ell=[0,\infty)\times\R/\ell\Z$ with metric $dr^2+\cosh(r)^2ds^2$.
\end{enumerate}
The geodesics $\{s=\rm{const}\}$ on cusps and funnels will be called \emph{outgoing},
the geodesic $\{r=0\}$ of a funnel will be called the \emph{base geodesic} of the funnel.
\end{rem}

On building blocks $P$ of type \ref{pp}(\ref{3h}, \ref{pp}(\ref{2h}, and \ref{pp}(\ref{1h}, the family of hyperbolic metrics on $P$, with finite area and with closed geodesics as boundary circles, may be parametrized by the lengths of the boundary circles, respectively.
There is exactly one complete hyperbolic metric of finite area on a building block of type \ref{pp}(\ref{0h}.
On building blocks of type \ref{pp}(\ref{1c}, there is a three-parameter family of hyperbolic metrics with closed geodesics as boundary circles, where the lengths of the two boundary circles and the soul of the M\"obius band can be chosen as parameters in $(0,\infty)$.

Finite area is equivalent to the requirement that all the ends of $P$ are cusps.
However, it is also possible to have an arbitrary subset of the ends of $P$ to consist of funnels instead, where the lengths of the bases of the funnels may serve as additional parameters for the family of complete hyperbolic metrics.

\begin{thm}[Collar theorem]\label{coll}
For any complete hyperbolic surface $S$ of finite type and any two-sided simple closed geodesic $c$ in $S$ of length $\ell$,
the neighborhood of width $\rho=\arsinh(1/\sinh(\ell/2))$ about $c$ in $S$ is isometric to $(-\rho,\rho)\times\R/\ell\Z$ with Riemannian metric $dr^2+\cosh(r)^2ds^2$.
\end{thm}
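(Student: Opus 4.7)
The plan is to realise the claimed isometry via Fermi (tubular) coordinates based on $c$, and to reduce the embedding assertion to a sharp distance estimate in the universal cover that is enforced by the simplicity of $c$.

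\emph{Normal form of the metric.} Since $c$ is two-sided, choose a smooth unit normal field $\nu$ along $c$ and define
\[ \Phi \colon \R \times \R/\ell\Z \longrightarrow S,\qquad \Phi(r,s) = \exp_{c(s)}\bigl(r\,\nu(s)\bigr). \]
A Jacobi field computation in constant curvature $-1$---the normal Jacobi fields along $r\mapsto\Phi(r,s)$ with initial data $J(0)=\dot c(s)$, $J'(0)=0$ satisfy $J''-J=0$, hence $|J(r)|=\cosh r$---shows that at every regular point $\Phi$ pulls the hyperbolic metric back to $dr^2 + \cosh(r)^2\,ds^2$. Lifting $c$ to a geodesic $\tilde c\subset\H^2$, the corresponding Fermi map $\tilde\Phi\colon\R^2\to\H^2$ is a global diffeomorphism, since $\H^2$ has no conjugate points and the orthogonal geodesics to $\tilde c$ sweep out $\H^2$ bijectively. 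Thus $\Phi$ is everywhere a local isometry, and the theorem reduces to the \emph{injectivity} of $\Phi$ on $(-\rho,\rho)\times\R/\ell\Z$.

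\emph{Reduction to tube disjointness.} Let $\tau\in\pi_1(S)\subset\operatorname{Isom}(\H^2)$ denote the hyperbolic translation of length $\ell$ along $\tilde c$. Two-sidedness of $c$ implies $\operatorname{Stab}_{\pi_1(S)}(\tilde c)=\langle\tau\rangle$, and on the Fermi chart this stabiliser acts by $(r,s)\mapsto(r,s+\ell)$. Standard covering-space theory then reduces injectivity of $\Phi|_{(-\rho,\rho)\times\R/\ell\Z}$ to the assertion that, for every deck transformation $g\in\pi_1(S)\setminus\langle\tau\rangle$, the open tubes $N_\rho(\tilde c)$ and $g\cdot N_\rho(\tilde c)=N_\rho(g\tilde c)$ are disjoint in $\H^2$, equivalently $d_{\H^2}(\tilde c,g\tilde c)\ge 2\rho$. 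Simplicity of $c$ guarantees that $\tilde c$ and $g\tilde c$ are disjoint geodesics for any such $g$ (a transverse crossing of two lifts would descend to a self-intersection of $c$), so there is a unique common perpendicular between them.

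\emph{The collar inequality and main obstacle.} It remains to prove $d_{\H^2}(\tilde c,g\tilde c)\ge 2\rho$, equivalently $\sinh(\ell/2)\sinh(w/2)\ge 1$ with $w:=d_{\H^2}(\tilde c,g\tilde c)$. Let $\sigma$ be the common perpendicular, with feet $p\in\tilde c$ and $q\in g\tilde c$, and let $\tau'=g\tau g^{-1}$ be the translation of length $\ell$ along $g\tilde c$. Joining $\sigma$, the arc of $g\tilde c$ from $q$ to $\tau'(q)$, the perpendicular dropped from $\tau'(q)$ onto $\tilde c$, and the corresponding intercepted arc of $\tilde c$ produces a Lambert quadrilateral in $\H^2$ (three right angles, one acute angle, the latter forced by the Gauss--Bonnet angle deficit). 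The classical hyperbolic trigonometry of such a quadrilateral---or, equivalently, the right-angled hexagon identity on the two-generator Fuchsian quotient $\H^2/\langle\tau,\tau'\rangle$---yields the desired inequality, with equality precisely at $w=2\rho$. The main obstacle is exactly this trigonometric step: one must verify that the constructed polygon is genuinely embedded and non-degenerate (here two-sidedness and simplicity of $c$ are both essential to rule out self-crossings of the sides), and then invoke the correct classical identity to extract the sharp bound, which is precisely what singles out the value $\sinh\rho\cdot\sinh(\ell/2)=1$, i.e.\ $\rho=\arsinh\bigl(1/\sinh(\ell/2)\bigr)$.
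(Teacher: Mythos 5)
Your proof takes a genuinely different route from the paper's, which starts a pair of pants decomposition of $S$ with $c$ (or with its two preimages in the orientation cover, when $S$ is non-orientable) as the first cut and then quotes Buser's treatment of collars and their disjointness in hyperbolic pairs of pants. Working directly in $\H^2$ is a legitimate alternative, and your Fermi normal form, the identification $\operatorname{Stab}_{\pi_1(S)}(\tilde c)=\langle\tau\rangle$ (which is exactly where two-sidedness enters, ruling out a glide reflection), and the reduction of injectivity to $d_{\H^2}(\tilde c,g\tilde c)\ge 2\rho$ are all sound.

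The trigonometric step, however, has a genuine gap. The Lambert quadrilateral you build from $\sigma$, an $\ell$-arc of $g\tilde c$, the perpendicular dropped from $\tau'(q)$ to $\tilde c$, and the intercepted arc of $\tilde c$ exists for \emph{every} pair of disjoint geodesics at \emph{every} distance $w$ and for every $\ell$; by itself it encodes no constraint linking $w$ to $\ell$. Its identities, e.g.\ $\cos\phi=\sinh a\,\sinh b$ for the two sides at the right angle opposite the acute vertex, always involve the two undetermined sides (the dropped perpendicular and the intercepted arc of $\tilde c$), so nothing pins $\sinh(w/2)\sinh(\ell/2)$ from below. Moreover, the local disjointness you invoke, $\tau g\tilde c\cap g\tilde c=\emptyset$ (equivalently $\tau'\tilde c\cap\tilde c=\emptyset$), only gives $\cosh w\ge\coth(\ell/2)$, which is strictly weaker than $\cosh w\ge\cosh(2\rho)=1+2/\sinh^2(\ell/2)$. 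The sharp bound really comes from the pair of pants for $\langle\tau,\tau'\rangle$: its right-angled hexagon with alternating sides $\ell/2,\,w,\,\ell/2,\,\dots$ gives $\cosh(\ell_3/2)=\sinh^2(\ell/2)\cosh w-\cosh^2(\ell/2)\ge1$, equivalently, after splitting the hexagon by its $\Z/2$-symmetry into two right-angled pentagons, $\sinh(w/2)\sinh(\ell/2)=\cosh(\ell_3/4)\ge1$. This is not ``equivalent'' to your single quadrilateral; it uses that $\tilde c$ and $g\tilde c$ descend to two disjoint simple boundary geodesics of a pair of pants in $\H^2/\langle\tau,\tau'\rangle$, which requires the simplicity of $c$ against all translates $h\tilde c$ with $h\in\langle\tau,\tau'\rangle$, not merely against $\tau g\tilde c$. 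To complete your argument you should replace the quadrilateral step by this hexagon/pentagon computation and justify the pair-of-pants structure of the quotient (discreteness and freeness are inherited from $\pi_1(S)$; simplicity of $c$ gives the embedded boundary geodesics), or, as the paper does, simply cut along $c$ to produce pairs of pants and cite Buser's Propositions 3.1.8 and 4.4.4.
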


\begin{proof}
For any complete hyperbolic metric on a building block $P$ of type \ref{pp}(\ref{3h}, \ref{pp}(\ref{2h}, and \ref{pp}(\ref{1h}, respectively, with finite area and with closed geodesics as boundary circles, the collar of width $\rho$ about a boundary circle $c$ of length $\ell$ is isometric to $[0,\rho)\times\R/\ell\Z$ with Riemannian metric $dr^2+\cosh(r)^2ds^2$ and is disjoint from the corresponding collar about any other boundary circle of $P$, by \cite[Propositions 3.1.8 and 4.4.4]{Bu2}.

Suppose first that $S$ is orientable and let $c$ be as in \cref{coll}.
Then we may start a pair of pants decomposition with $c$ as first cut and conclude the assertion from what we just said.

Suppose now that $S$ is not orientable.
Since $c$ is two-sided, the preimage of $c$ under the orientation covering $S_o\to S$ consists of two simple closed geodesics $c_1$ and $c_2$ of length $\ell$.
We may start a pair of pants decomposition of $S_o$ with $c_1$ and $c_2$ as first cuts.
By what we said in the beginning of the proof, the neighborhoods of radius $\rho$ about these are disjoint.
\end{proof}

For a complete and connected Riemannian surface $S$ of finite type without boundary, denote by $\sys(S)$ the \emph{systole of $S$} and by $\sys^*(S)$ the length of a shortest two-sided non-separating simple closed geodesic of $S$.
If $S$ admits complete hyperbolic metrics, denote by $\Sys(S)$ the maximal possible systole among complete hyperbolic metrics on $S$ and by $\Sys^*(S)$ the maximal possible length of a shortest two-sided non-separating simple closed geodesic among hyperbolic metrics on $S$.

Note that on $S_\gamma=S_{\gamma,0}$ and $S_{\gamma,1}$, a simple closed curve is separating if and only if it is homologically trivial.
Hence the following result is a consequence of Parlier's \cite[Theorem 1.1]{Pa}.

\begin{thm}[Parlier]\label{parl}
Among all complete hyperbolic metrics of finite area on a closed orientable surface $S$ with at most one puncture, $\Sys(S)$ and $\Sys^*(S)$ are achieved.
Furthermore, a complete hyperbolic metric achieving $\Sys(S)$ also achieves $\Sys^*(S)$.\end{thm}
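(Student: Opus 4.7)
The plan is to prove existence of maximizers via compactness in moduli space, and then deduce the ``furthermore'' clause by showing that every shortest simple closed geodesic of a $\Sys(S)$-realizing metric is non-separating. The assumption $p\le 1$ enters through the observation preceding the theorem: on $S_\gamma$ and $S_{\gamma,1}$, a simple closed curve is separating if and only if it is homologically trivial, so $\sys(g)=\sys^*(g)$ precisely when some shortest simple closed geodesic is homologically non-trivial.

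For the upper bound, Gauss--Bonnet gives total area $-2\pi\chi(S)$ for a complete hyperbolic metric of finite area on $S$, whereas any embedded metric disk of radius $r<\sys(g)/2$ has area $2\pi(\cosh r-1)$. Letting $r\nearrow\sys(g)/2$ yields $\cosh(\sys(g)/2)\le 1-\chi(S)$, hence $\Sys(S)<\infty$. For existence, let $[g_n]\in\mathcal M(S)$ be a sequence with $\sys(g_n)\to\Sys(S)$. Eventually $\sys(g_n)\ge\Sys(S)/2>0$, so by Mumford's compactness theorem the classes $[g_n]$ lie in a compact subset of $\mathcal M(S)$, and continuity of $\sys$ produces a limit $g_\star$ with $\sys(g_\star)=\Sys(S)$.

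The heart of the proof is to show that every systolic geodesic in $g_\star$ is non-separating. Suppose for contradiction that a shortest simple closed geodesic $\alpha$ of $g_\star$ separates $S$ into pieces $S_1$ and $S_2$. I would use Fenchel--Nielsen coordinates adapted to a pants decomposition containing $\alpha$ to deform $g_\star$ by enlarging $\ell(\alpha)$ while fixing all other cuff lengths. Combining the collar estimate from \cref{coll} with standard formulas for derivatives of length functions in Fenchel--Nielsen coordinates, one checks that for sufficiently small such deformations $\ell(\alpha)$ strictly grows while no other simple closed geodesic drops below $\sys(g_\star)$, producing a metric with strictly larger systole and contradicting maximality. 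The assumption $p\le 1$ is used to guarantee that both sides of $\alpha$ have enough topological complexity for such a deformation to exist; this is essentially the content of Parlier's argument in \cite{Pa}. Consequently $\sys(g_\star)=\sys^*(g_\star)=\Sys(S)$.

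It remains to show that $g_\star$ also realizes $\Sys^*(S)$, equivalently, that $\Sys(S)=\Sys^*(S)$. The trivial direction $\Sys(S)\le\Sys^*(S)$ holds since $\sys\le\sys^*$ pointwise in $\mathcal M(S)$. For the reverse, if some $g$ satisfied $\sys^*(g)>\Sys(S)$, then every shortest simple closed geodesic of $g$ would be separating and shorter than $\sys^*(g)$; iterating the Fenchel--Nielsen deformation of the previous paragraph on these short separating curves produces a metric with systole exceeding $\Sys(S)$, contradicting the definition of $\Sys(S)$. The main obstacle is clearly the perturbation step: explicitly constructing a deformation that strictly raises the systole whenever the shortest closed geodesic is separating, which relies on delicate hyperbolic trigonometry in the collars adjacent to $\alpha$ and on the topological rigidity provided by $p\le 1$.
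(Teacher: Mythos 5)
The paper does not give an independent proof of this theorem; it simply observes that it is an immediate corollary of Parlier's \cite[Theorem~1.1]{Pa}, combined with the elementary remark preceding the statement that, on a closed orientable surface with at most one puncture, a simple closed curve is separating if and only if it is homologically trivial, so that $\sys^*$ coincides with Parlier's homology systole. Your proposal instead tries to re-derive Parlier's theorem from scratch, which is considerably more ambitious. The existence part (area bound via Gauss--Bonnet, Mumford compactness, continuity of the systole) is fine. But the step you yourself call ``the heart of the proof'' --- that a systole-maximizing metric has no separating systolic geodesic, via a Fenchel--Nielsen deformation that enlarges $\ell(\alpha)$ --- is only asserted (``one checks that $\ldots$'') and not actually carried out. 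This is precisely the non-trivial content of Parlier's argument. In particular, at a maximizer there are typically several systolic geodesics (indeed they fill the surface), possibly a mixture of separating and non-separating ones, and increasing $\ell(\alpha)$ in a pants decomposition containing $\alpha$ will in general \emph{shorten} other systolic geodesics transversal to the pants curves; you would have to exhibit a single deformation direction along which the lengths of \emph{all} systolic geodesics strictly increase simultaneously, and nothing in the proposal addresses this.

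The closing paragraph has a second gap of a similar nature: ``iterating the Fenchel--Nielsen deformation'' at an arbitrary metric $g$ with $\sys^*(g)>\Sys(S)$ gives no control --- the local increments could shrink to zero, new systolic curves (possibly non-separating) can appear after each step, and there is no reason the process produces a metric with systole strictly above $\Sys(S)$. A cleaner route, closer to what Parlier actually does, is to establish existence and the non-separation property directly for a $\sys^*$-maximizer $g^*$: once one knows $\sys(g^*)=\sys^*(g^*)$, one gets $\Sys(S)\ge\sys(g^*)=\Sys^*(S)\ge\Sys(S)$, hence $\Sys(S)=\Sys^*(S)$ and both suprema are attained at $g^*$; the ``furthermore'' clause for an arbitrary $\Sys$-maximizer then follows from the pointwise inequality $\sys\le\sys^*$ exactly as you note. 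As written, however, the deformation step is a genuine gap and the proposal does not constitute a proof.
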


\section{Preliminaries on spectral theory}
\label{secan}
Let $M$ be a Riemannian manifold, possibly not complete and possibly with non-empty boundary $\partial M$.
Denote by $C^k(M)$ the space of $C^k$-functions on $M$,
by $C^k_{c}(M)\subseteq C^k(M)$ the space of $C^k$-functions on $M$ with compact support,
and by $C^k_{cc}(M)\subseteq C^k_c(M)$ the space of $C^k$-functions on $M$ with compact support in the interior $\mathring M=M\setminus\partial M$ of $M$, respectively.
Denote by $L^2(M)$ the space of (equivalence classes of) square-integrable measurable functions on $M$.
For integers $k\ge0$, let $H^k(M)$ be the Sobolev space of functions in $L^2(M)$ which have, for all $0\le j\le k$, square-integrable $j$-th derivative $\nabla^{j}f$ in the sense of distributions, that is, tested against functions from $C^\infty_{cc}(M)$.
Let $H^k_0(M)$ be the closure of $C^\infty_{cc}(M)$ in $H^k(M)$.

Denote by $\Delta$ the Laplace operator of $M$ and by $\nu$ the outward normal field of $M$ along $\partial M$.
For the following result, see for example \cite[page 85]{T2}.

\begin{thm}
If $M$ is complete, then the Laplacian $\Delta$ with domains
\begin{align*}
  \mathcal{D}_0 = \{\vf\in C^\infty_c(M) \mid \vf|_{\partial M}=0\}
  \quad\text{and}\quad
  \mathcal{D}_N = \{\vf\in C^\infty_c(M) \mid \nabla_\nu\vf=0\}
\end{align*}
is essentially self-adjoint in $L^2(M)$.
\end{thm}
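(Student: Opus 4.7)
The plan is to verify the two classical conditions for essential self-adjointness of a symmetric operator: (i) symmetry on the given dense domain, and (ii) triviality of both deficiency subspaces $\ker(\Delta^*\mp i)$. Symmetry is immediate from Green's formula: for $\vf,\psi$ in either $\mathcal{D}_0$ or $\mathcal{D}_N$, integration by parts on a compact neighborhood of $\supp\vf\cup\supp\psi$ yields
\begin{equation*}
  \int_M(\Delta\vf)\bar\psi\,dV-\int_M\vf\,\overline{\Delta\psi}\,dV
  =\int_{\partial M}\bigl((\nabla_\nu\vf)\bar\psi-\vf\,\overline{\nabla_\nu\psi}\bigr)\,dA,
\end{equation*}
and the right-hand side vanishes because $\vf$ and $\psi$ satisfy the same homogeneous boundary condition. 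It therefore suffices to show that any $u\in L^2(M)$ with $\Delta^*u=-iu$ is zero; the case $+i$ is symmetric.

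Given such a $u$, interior elliptic regularity applied to the distributional identity $\Delta u=-iu$ on $\mathring M$ yields $u\in C^\infty(\mathring M)$, while the pairing defining $\Delta^*$ on $\mathcal{D}_0$ (respectively $\mathcal{D}_N$) together with boundary elliptic regularity provides enough regularity up to $\partial M$ to guarantee $u|_{\partial M}=0$ (respectively $\nabla_\nu u=0$) in an integration-by-parts sense. Completeness now enters through a cutoff construction: fix $x_0\in M$, let $r(x)=d(x,x_0)$, choose $\eta\colon\R\to[0,1]$ smooth with $\eta\equiv 1$ on $(-\infty,1]$ and $\eta\equiv 0$ on $[2,\infty)$, and set $\chi_n=\eta(r/n)$, smoothed by a small mollification if necessary. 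Because $M$ is complete, closed metric balls about $x_0$ are compact, so $\chi_n\in C^\infty_c(M)$; since $r$ is $1$-Lipschitz, $|\nabla\chi_n|\le C/n$ almost everywhere.

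Testing $\Delta u=-iu$ against $\chi_n^2\bar u$ and applying Green's formula (the boundary contribution vanishes by the boundary condition on $u$) produces
\begin{equation*}
  \int_M\chi_n^2|\nabla u|^2\,dV+2\int_M\chi_n\bar u\,\la\nabla\chi_n,\nabla u\ra\,dV
  =i\int_M\chi_n^2|u|^2\,dV.
\end{equation*}
Writing $X=\|\chi_n\nabla u\|_{L^2}$ and $Y=\|u\,|\nabla\chi_n|\|_{L^2}$, the real part of this identity combined with Cauchy--Schwarz gives $X^2\le 2XY$, hence $X\le 2Y$, while the imaginary part gives $\int_M\chi_n^2|u|^2\le 2XY\le 4Y^2\le 4C^2n^{-2}\|u\|_{L^2}^2$. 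Letting $n\to\infty$ and invoking monotone convergence forces $u\equiv 0$.

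The main technical obstacle is the boundary regularity of $u$ needed to justify the integration by parts in the last display. The Dirichlet case is the gentler one, since the duality defining $\Delta^*$ rather directly gives the vanishing trace of $u$. The Neumann case requires upgrading $u$ to $H^2_{\loc}$ up to $\partial M$ with vanishing normal trace, which can be handled by standard elliptic boundary regularity (reflection across $\partial M$ in boundary charts, or tangential difference quotients). Completeness itself contributes in two decisive ways: it ensures that $\supp\chi_n$ is compact, and it underlies the crucial gradient bound $|\nabla\chi_n|\le C/n$ via the global Lipschitz property of the distance function; without it, the estimate at the end of the cutoff argument would fail.
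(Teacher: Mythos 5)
Your proof is correct and follows the standard Gaffney--Strichartz cutoff argument via the von Neumann deficiency criterion, which is essentially the approach taken in the reference \cite[page 85]{T2} that the paper cites in lieu of a proof. The one delicate point you rightly flag is the boundary elliptic regularity needed to conclude that a deficiency vector $u\in\ker(\Delta^*\mp i)$ inherits the Dirichlet (resp.\ Neumann) condition and has enough regularity up to $\partial M$ to justify the integration by parts with vanishing boundary term; once that is supplied, the cutoff estimate $\|\chi_n u\|_{L^2}^2\le 4C^2 n^{-2}\|u\|_{L^2}^2$ forces $u=0$ exactly as you argue.
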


The corresponding closures of $\Delta$ will be called the \emph{Dirichlet} and \emph{Neumann extension} of $\Delta$, respectively.
In the same vein, we will speak of Dirichlet and Neumann spectrum or eigenvalues of $M$.
Note that these notions coincide with the usual ones if the boundary of $M$ is empty.

For a non-vanishing $\vf\in C^\infty_c(M)$,
\begin{align}\label{ray}
  R(\vf) = \frac{\int_M|\nabla\vf|^2}{\int_M\vf^2}
\end{align}
is called the \emph{Rayleigh quotient} of $\vf$.
The real numbers
\begin{align}\label{bot}
  \lambda_0(M) = \inf_{\substack{\vf\in C^\infty_{cc}(M)\\ \vf\ne0}} R(\vf)
  \quad\text{and}\quad
  \lambda_N(M) = \inf_{\substack{\vf\in C^\infty_c(M)\\ \vf\ne0}} R(\vf)
\end{align}
are equal to the minimum of the Dirichlet and Neumann spectrum of $M$ and are, therefore, called the \emph{bottom of the Dirichlet} and \emph{Neumann spectrum of $M$}, respectively.
If $\partial M=\emptyset$, then $\lambda_0(M)=\lambda_N(M)$.
If $M$ is closed, that is, compact and connected without boundary, $\lambda_0(M)=0$.
If $M$ is compact and connected with non-empty boundary, $\lambda_0(M)$ is the smallest Dirichlet eigenvalue of $M$ and $\lambda_N(M)=0$. 
If $M$ is connected and $\tilde{M}\to M$ is a normal Riemannian covering with amenable group of covering transformations, then $\lambda_0(\tilde M)=\lambda_0(M)$; see \cite{Br2} and, for the generality of the statement here and a more elementary argument, see also \cite{BMP}.

The \emph{essential spectrum} $\sigma_{\ess}(A)$ of a self-adjoint operator $A$ on a Hilbert space $H$ consists of all $\lambda\in\R$ such that $A-\lambda$ is not a Fredholm operator.
The essential spectrum of $A$ is a closed subset of the spectrum $\sigma(A)$ of $A$.
The complement $\sigma_d(A)=\sigma(A)\setminus\sigma_{\ess}(A)$, the \emph{discrete spectrum of $A$}, is a discrete subset of $\R$ and consists of eigenvalues of $A$ of finite multiplicity.

If $M$ is a complete Riemannian manifold with compact boundary (possibly empty),
then the  essential Dirichlet and Neumann spectra of $M$ coincide and their infimum is given by
\begin{equation}\label{ess}
  \lambda_{\ess}(M) = \sup_K\inf\{R(\vf)\mid \vf\in C^\infty_c(M\setminus K), \vf\ne0\},
\end{equation}
where $K$ runs over all compact subsets of $M$;
compare with \cite[Theorem 14.4]{HS}, where \eqref{ess} is shown for Schr\"odinger operators on Euclidean spaces.
By \eqref{ess}, if $M$ is compact, then $\lambda_{\ess}(M)=\infty$, that is, the essential Dirichlet and Neumann spectra of $M$ are empty.

Since a basis of neighborhoods of any end of a surface of finite type may be chosen to consist of annuli, we have
\begin{align}\label{ess0}
  \lambda_{\ess}(S)\ge\lambda_0(\tilde S)
\end{align}
for any complete Riemannian surface $S$ of finite type,
where $\tilde S\to S$ denotes the universal covering of $S$ and $\tilde S$ is endowed with the lifted Riemannian metric; see \cite[Proposition 3.9]{BMM2}.

\begin{rem}
For any complete hyperbolic surface of finite type, we get that $\lambda_{\ess}(S)\ge1/4$.
On the other hand, any surface $S$ of infinite type admits complete hyperbolic metrics with corresponding $\lambda_{\ess}(S)=0$.
\end{rem}

\section{Theorems \ref{iran} and \ref{ibus} revisited}
\label{secbr}
Mainly relying on the original argument of Buser, we discuss the following extended version of Buser's \cref{ibus}.

\begin{thm}\label{bus}
Let $S$ be a connected surface of finite type with Euler characteristic $\chi(S)<0$, and let $\ve\in(0,1/4]$.
Then $S$ carries complete hyperbolic metrics, with closed geodesics as boundary circles if $\partial S\ne\emptyset$, with $-\chi(S)$ Dirichlet eigenvalues in $[0,\ve)$.
\end{thm}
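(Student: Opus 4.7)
The plan is to adapt Buser's original pairs-of-pants construction. By \cref{pp}, decompose $S$ into $-\chi(S)$ pairs of pants $P_1, \ldots, P_{-\chi(S)}$. Using the parametrisation of hyperbolic pairs of pants by their boundary-circle lengths (together with the M\"obius-soul length in the case of a type-5 block) recorded in \cref{secsu}, choose a complete hyperbolic metric on $S$, with closed geodesics as boundary circles if $\partial S\ne\emptyset$, such that every closed-geodesic boundary component of every $P_i$ has the same length $\ell$, where $\ell>0$ will be fixed small later; cusp ends of $S$ remain as cusps.

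For each $i$, define a Lipschitz test function $\vf_i\colon S\to[0,1]$ by setting $\vf_i=0$ outside $P_i$, $\vf_i=1$ on $P_i$ outside the half-collars of its closed-geodesic boundaries (and on any cusp end of $P_i$), and $\vf_i(r,s)=r/\rho$ across each such half-collar $[0,\rho]\times\R/\ell\Z$ provided by \cref{coll}, with $r=0$ on the boundary geodesic and $\rho=\arsinh(1/\sinh(\ell/2))$. Since $\vf_i$ vanishes on $\partial P_i$ and on every component of $\partial S$ lying in $P_i$, the zero extension of $\vf_i$ to $S$ lies in $H^1_0(S)$. The functions $\vf_1,\ldots,\vf_{-\chi(S)}$ have pairwise disjoint supports up to a null set (they meet only along shared boundary geodesics, where both vanish) and are therefore $L^2$-orthogonal.

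Next, estimate the Rayleigh quotient. On each half-collar $|\nabla\vf_i|^2=1/\rho^2$, while the area equals $\ell\sinh(\rho)=\ell/\sinh(\ell/2)<2$, so $\int_S|\nabla\vf_i|^2=O(1/\rho^2)=O(1/(\log(1/\ell))^2)\to 0$ as $\ell\to 0$. On the other hand, each $P_i$ has area $2\pi$ by Gauss-Bonnet, while the total half-collar area inside $P_i$ is at most $6$ for $\ell$ small, so the bulk on which $\vf_i=1$ has area bounded below by a positive constant; hence $\int_S\vf_i^2\ge c_0>0$ uniformly in $\ell$ and $i$. Consequently $R(\vf_i)\to 0$ uniformly in $i$ as $\ell\to 0$. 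Fix $\ell>0$ small enough that $R(\vf_i)<\ve$ for every $i$. Because the supports are essentially disjoint, any $\vf=\sum c_i\vf_i$ satisfies $R(\vf)\le\max_iR(\vf_i)<\ve$. The min-max characterisation applied to the $(-\chi(S))$-dimensional subspace $\mathrm{span}(\vf_1,\ldots,\vf_{-\chi(S)})\subset H^1_0(S)$ then yields $-\chi(S)$ Dirichlet eigenvalues in $[0,\ve)$; the restriction $\ve\le 1/4$ ensures these values lie below $\lambda_{\ess}(S)\ge 1/4$ (cf.\ \eqref{ess0}) and thus are genuine discrete eigenvalues.

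The main technical point is the uniformity of the construction across the five types of pair of pants in \cref{pp}, especially those containing a cusp or an embedded M\"obius band. This is handled by \cref{coll}, which applies to every two-sided short boundary geodesic and confines the gradient energy of $\vf_i$ to the half-collars, together with the parametrisation results in \cref{secsu}, which permit all relevant closed-geodesic boundary components of all $P_i$ to be shrunk simultaneously while leaving the Möbius-soul and cusp parameters free.
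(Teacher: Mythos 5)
Your proof is correct and follows the paper's argument essentially step for step: a pants decomposition as in \cref{pp}, shrinking the boundary geodesics, one test function per building block supported on the collar-free part and decaying linearly across collars, Rayleigh quotients tending to zero as $\ell\to 0$, and finally the min-max principle together with the essential-spectrum bound $\lambda_{\ess}(S)\ge 1/4$ from \eqref{ess0}. The only real deviation is that you leave $\vf_i\equiv1$ on cusp ends and simply assert $\vf_i\in H^1_0(S)$, whereas the paper truncates along cusps to obtain compactly supported test functions; your shortcut is legitimate because cusps have finite area and $C^\infty_{cc}$ is dense in $H^1_0$ on the complete surface, but the explicit truncation keeps the argument elementary and would also be required if one wanted to replace some cusps by funnels, as \cref{ends} allows.
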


If $\partial S=\emptyset$, the Dirichlet eigenvalues are the usual eigenvalues of $S$.

For $\ve=1/4$, \cref{bus} also follows from the main result of \cite{DPRS}, at least in the case where $\partial S=\emptyset$.
Nevertheless, it seems appropriate to us to include the proof of \cref{bus}
since the argument is nice and short and the proof of \cref{ran} uses a variation of it.

\begin{proof}[Proof of \cref{bus}]
Choose a decomposition of $S$ into pairs of pants as in \cref{pp}.
For each building block $P$ of the decomposition, choose a hyperbolic metric on $P$ with closed geodesics as boundary circles
and with cusps and funnels around the punctures.
Independently of the chosen hyperbolic metric, the area of $P$ minus its funnels (if funnels occur) is $2\pi$, by the Gauss-Bonnet formula.
The conditions on the hyperbolic metrics on the different $P$ are that the lengths of the boundary circles are sufficiently small and fitting according to the decomposition of $S$.
For the funnels, the corresponding lengths of their base geodesics should also be sufficiently small.
The meaning of sufficiently small will become clear from the following construction of test functions.

On each building block $P$, we consider the function $\vf_P$ which is equal to $1$ on the set $Q$ of points of $P$ of distance at least $1$ from the boundary circles and funnels of $P$, vanishes on the boundary circles and funnels of $P$, and decays linearly from $1$ to $0$ along the normal geodesic segments in between.
We arrive at the first condition which is that the neighborhoods of width $1$ about the different boundary circles and base geodesics of funnels (if they occur) should be pairwise disjoint.
This is achieved by choosing them sufficiently short.
It is also understood that each such $\vf_P$ is extended by zero onto the rest of $S$.
Then the $-\chi(S)$ different $\vf_P$ are square integrable Lipschitz functions on $S$ which are pairwise $L^2$-orthogonal.

The area of each domain $N$ where a function $\vf_P$ decays is $\sinh(1)\ell$, where $\ell$ denotes the length of the corresponding closed boundary or base geodesic.
Moreover, the gradient of $\vf_P$ has norm $1$ on these domains $N$.
Therefore \[\int|\nabla\vf_P|^2=\sum|N|=\sinh(1)\sum\ell,\] where the sum is over the boundary components and funnels of $P$.
For the Rayleigh quotient of $\vf_P$, we obtain
\begin{align*}
  R(\vf_P) = \frac{\int|\nabla\vf_P|^2}{\int\vf_P^2} \le \frac{\sinh(1)\sum\ell}{2\pi-\sinh(1)\sum\ell}.
\end{align*}
Choosing the hyperbolic metric on $P$ such that the lengths $\ell$ are sufficiently small, the right hand side is less than $\ve$.

If the hyperbolic metric on $P$ has no cusps, the support of $\vf_P$ is compact.
If it has cusps, we modify $\vf_P$ along each cusp by having it decay linearly from $1$ to $0$ along an interval of length $1$ along the outgoing geodesics.
Then the Rayleigh quotient stays less than $\ve$ if this is done sufficiently far out.
We arrive at $-\chi(S)$ pairwise $L^2$-orthogonal Lipschitz functions with compact support which vanish along the boundary of $S$ and which have Rayleigh quotients less than $\ve$.
Now the essential spectrum of $S$ is contained in $[1/4,\infty)$, by \eqref{ess0}, and $\ve\in(0,1/4]$.
Hence the variational characterization of Dirichlet eigenvalues below the bottom of the essential spectrum implies that $S$ has at least $-\chi(S)$ Dirichlet eigenvalues less than $\ve$.
\end{proof}

\begin{rem}\label{ends}
In \cref{bus}, any two complimentary subsets $C$ and $H$ in the set of ends of $S$ may be chosen to consist of cusps and funnels, respectively. 
\end{rem}

\begin{rem}\label{laxp}
By the work of Lax and Phillips, an infinite hyperbolic hinge cannot carry a non-trivial square-integrable solution $\vf$ of the equation $\Delta\vf=\lambda\vf$ with $\lambda\ge1/4$; see \cite[Theorem 4.8]{LP}.
(Notice that \cite[Theorem 4.8]{LP} also applies in dimension two, see the last sentence in Section 4 of \cite{LP}.)
Hence in \cref{bus}, if $S$ has a funnel, then $S$ does not have eigenvalues in $[1/4,\infty)$.
\end{rem}

Next, we present an extension of Randol's \cref{iran} with an elementary geometric proof, partially motivated by Buser's argument.

\begin{thm}\label{ran}
Let $S$ be a complete and connected hyperbolic surface of finite area with closed geodesics as boundary circles (if $\partial S$ is not empty) and with a two-sided non-separating simple closed geodesic of length $\ell$ in the interior of $S$.
Let $n\ge1$, $\ve>0$, and
\begin{align*}
  k \ge \frac{\ell e^{\ell}}{2\sinh(\ell/2)\ve|S|}.
\end{align*}
If $S$ is not compact, assume also that $\ve\le1/4$.
Then $S$ has a cyclic hyperbolic covering of order $(k+2)n$ with at least $n$ Neumann eigenvalues in $[0,\ve)$.
\end{thm}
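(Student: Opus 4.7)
The plan is to build the cyclic hyperbolic covering $\tilde S\to S$ of order $m=(k+2)n$ by cutting $S$ along $c$ and gluing $m$ copies cyclically, and then to exhibit $n$ test functions on $\tilde S$ with pairwise disjoint supports whose Rayleigh quotients are all less than $\ve$. Cutting $S$ along the two-sided non-separating geodesic $c$ yields a connected hyperbolic surface $S'$ of area $|S|$, whose boundary consists of $\partial S$ together with two new geodesic circles $c_+,c_-$ of length $\ell$. Taking $m$ ordered copies of $S'$ and identifying $c_+$ of the $j$-th copy with $c_-$ of the $(j+1)$-st copy cyclically produces a hyperbolic surface $\tilde S$ with a free $\Z/m\Z$-action by cyclic shift and quotient $S$, that is, an $m$-fold cyclic Riemannian covering of $S$.

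Label the preimages of $c$ cyclically as $c_1,\dots,c_m$, and write $S'_j$ for the copy of $S'$ between $c_j$ and $c_{j+1}$. Partition these copies into $n$ \emph{active blocks} $A_i:=S'_{(i-1)(k+2)+1}\cup\dots\cup S'_{(i-1)(k+2)+k+1}$, each consisting of $k+1$ consecutive copies, separated by single \emph{separator copies} $B_i:=S'_{i(k+2)}$. By \cref{coll}, each $c_j$ admits an embedded collar in $\tilde S$ isometric to $(-\rho,\rho)\times\R/\ell\Z$ with metric $dr^2+\cosh(r)^2\,ds^2$, where $\rho=\arsinh(1/\sinh(\ell/2))$; because cutting $S$ along $c$ splits this collar into two disjoint half-collars of width $\rho$ around $c_+$ and $c_-$ inside $S'$, the two half-collars of the boundary geodesics of any copy of $S'$ in $\tilde S$ are disjoint. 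For each $i$, define a Lipschitz test function $\vf_i$ by setting $\vf_i(r,s):=(r+\rho)/(2\rho)$ on each of the two collars around the exterior boundaries of $A_i$, with the coordinate $r$ chosen to increase from the adjacent separator into $A_i$, $\vf_i\equiv 1$ on the remainder of $A_i$, and $\vf_i\equiv 0$ elsewhere. Since the supports of $\vf_i$ and $\vf_{i+1}$ inside any separator $B_i$ lie in the two disjoint half-collars of its boundary geodesics, the $\vf_i$ are pairwise $L^2$-orthogonal.

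The Rayleigh quotient is controlled by a short collar computation. The gradient of $\vf_i$ is supported on the two collars around the exterior boundaries of $A_i$, where $|\nabla\vf_i|^2=1/(4\rho^2)$ and the collar area equals $2\ell\sinh(\rho)=2\ell/\sinh(\ell/2)$, giving
\[ \int|\nabla\vf_i|^2 \,=\, \frac{\ell}{\rho^2\sinh(\ell/2)}. \]
The identity $\sinh(\rho)=1/\sinh(\ell/2)$ yields $\cosh(\rho)=\coth(\ell/2)$, and the elementary inequality $\rho\ge\tanh(\rho)=1/\cosh(\ell/2)$ then gives $1/\rho^2\le\cosh^2(\ell/2)\le e^\ell$, whence $\int|\nabla\vf_i|^2\le \ell e^\ell/\sinh(\ell/2)$. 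Since $\vf_i\equiv 1$ on the bulk of $A_i$, one has $\int\vf_i^2\ge(k+1)|S|-2\ell/\sinh(\ell/2)$, and the hypothesis on $k$ forces $R(\vf_i)<\ve$.

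Finally, invoke the min--max characterization of Neumann eigenvalues below the bottom of the essential spectrum. If $\tilde S$ is compact, the essential spectrum is empty and the $n$ pairwise $L^2$-orthogonal test functions of Rayleigh quotient $<\ve$ produce $n$ Neumann eigenvalues in $[0,\ve)$. Otherwise $S$, and hence $\tilde S$, inherits at least one cusp or funnel, and the hypothesis $\ve\le 1/4$ together with $\lambda_{\ess}(\tilde S)\ge\lambda_0(\H^2)=1/4$ from \eqref{ess0} yields the same conclusion. The main difficulty is not conceptual but computational: the cutoff must be chosen sharply enough to recover the precise threshold $k\ge \ell e^\ell/(2\sinh(\ell/2)\ve|S|)$ in the hypothesis; the argument itself is Buser's test-function construction from the proof of \cref{bus} adapted to a cyclic cover.
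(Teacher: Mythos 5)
Your construction --- cutting $S$ along $c$, forming the $m$-fold cyclic cover from $m=(k+2)n$ cyclically glued copies of the cut surface, and exhibiting $n$ pairwise $L^2$-orthogonal test functions that ramp across collars of the lifted geodesics --- is essentially the paper's, the only presentational difference being that the paper places its ramps on the half-collars $(-\rho,0]$ and $[0,\rho)$ so that the supports of the $\vf_i$ meet only in measure-zero circles rather than extending into the separator copies.

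The genuine gap, which you flag yourself, is the final numerical estimate: the inequality $\rho\ge\tanh\rho=1/\cosh(\ell/2)$ is too weak. It gives $1/\rho^2\le\cosh^2(\ell/2)\le e^\ell$ and hence $\int|\nabla\vf_i|^2\le\ell e^\ell/\sinh(\ell/2)$, while $\int\vf_i^2\ge(k+1)|S|-2\ell/\sinh(\ell/2)\ge k|S|$ since the embedded collar of $c$ in $S$ has area $2\ell\sinh\rho=2\ell/\sinh(\ell/2)\le|S|$. Thus the hypothesis $k|S|\ve\ge\ell e^\ell/\big(2\sinh(\ell/2)\big)$ only yields $R(\vf_i)\le 2\ve$, a factor of two short of the required threshold, and the additional $|S|$ in the denominator cannot rescue this for $k\ge1$. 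The paper closes this gap with the exact expansion
\begin{equation*}
  \rho=\arsinh\big(1/\sinh(\ell/2)\big)=\ln\frac{1+e^{-\ell/2}}{1-e^{-\ell/2}}=2\artanh\big(e^{-\ell/2}\big)=2\Big(e^{-\ell/2}+\tfrac13 e^{-3\ell/2}+\cdots\Big)>2e^{-\ell/2},
\end{equation*}
which improves your lower bound $1/\cosh(\ell/2)=2e^{-\ell/2}/(1+e^{-\ell})$ by the factor $1+e^{-\ell}$ and yields $1/\rho^2<e^\ell/4$. Plugged into your computation, $\int|\nabla\vf_i|^2=\ell\sinh\rho/\rho^2<\ell e^\ell/\big(4\sinh(\ell/2)\big)$, so $R(\vf_i)<\ve/2$ with room to spare. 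Your construction is therefore sound once the crude bound $\rho\ge\tanh\rho$ is replaced by $\rho=2\artanh(e^{-\ell/2})>2e^{-\ell/2}$.
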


If $\partial S=\emptyset$, the Neumann eigenvalues are the usual eigenvalues of $S$.

\begin{figure}
\includegraphics[width=11cm]{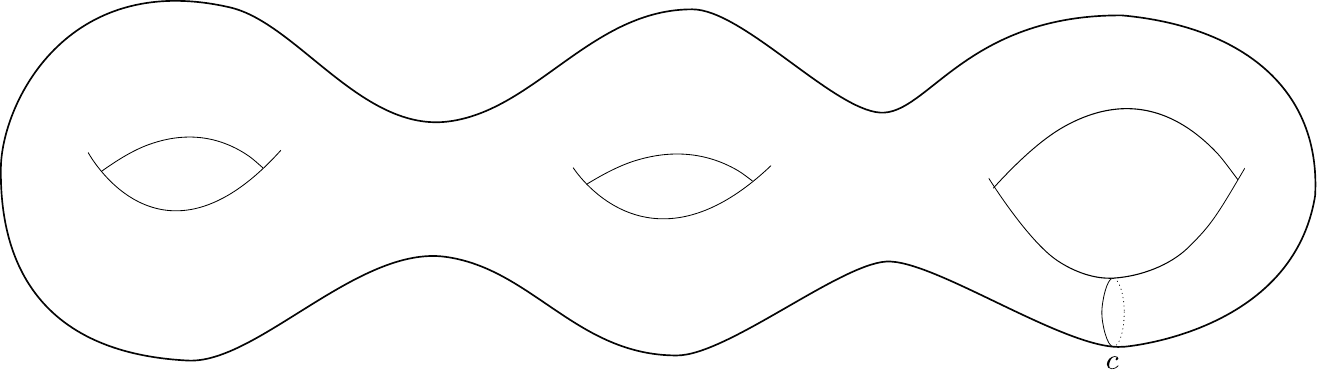}
\caption{Choosing the geodesic $c$}\label{figu_init}
\end{figure}

\begin{figure}
\includegraphics[width=11cm]{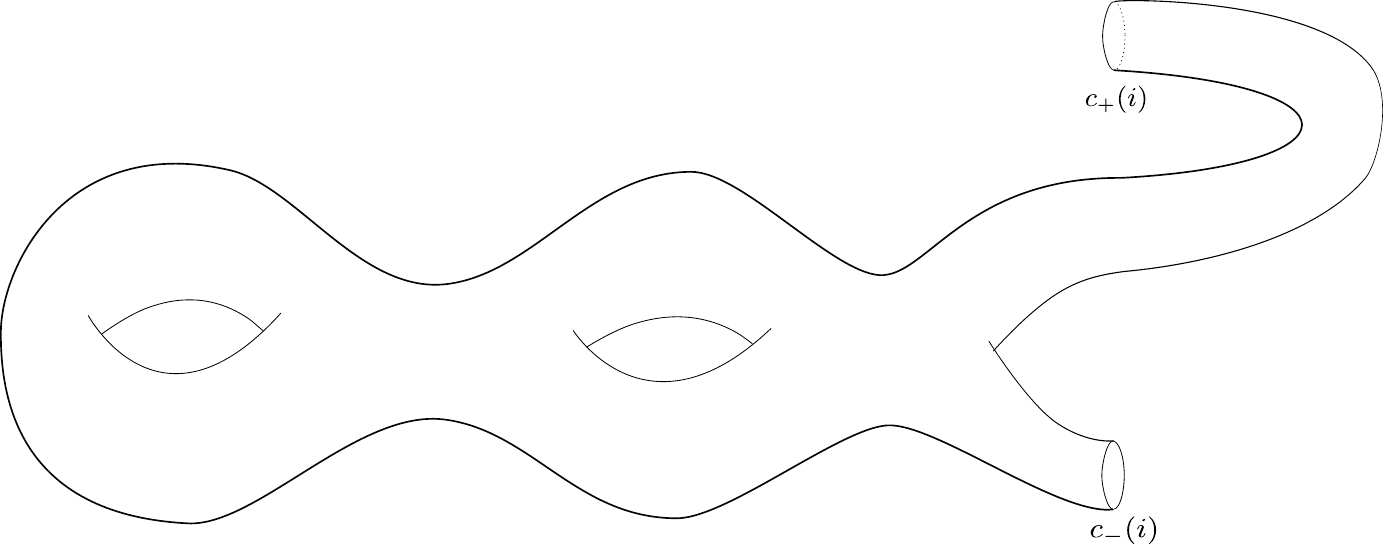}
\caption{A building block $T(i)$}\label{figu_build}
\end{figure}

\begin{figure}
\includegraphics[width=11cm]{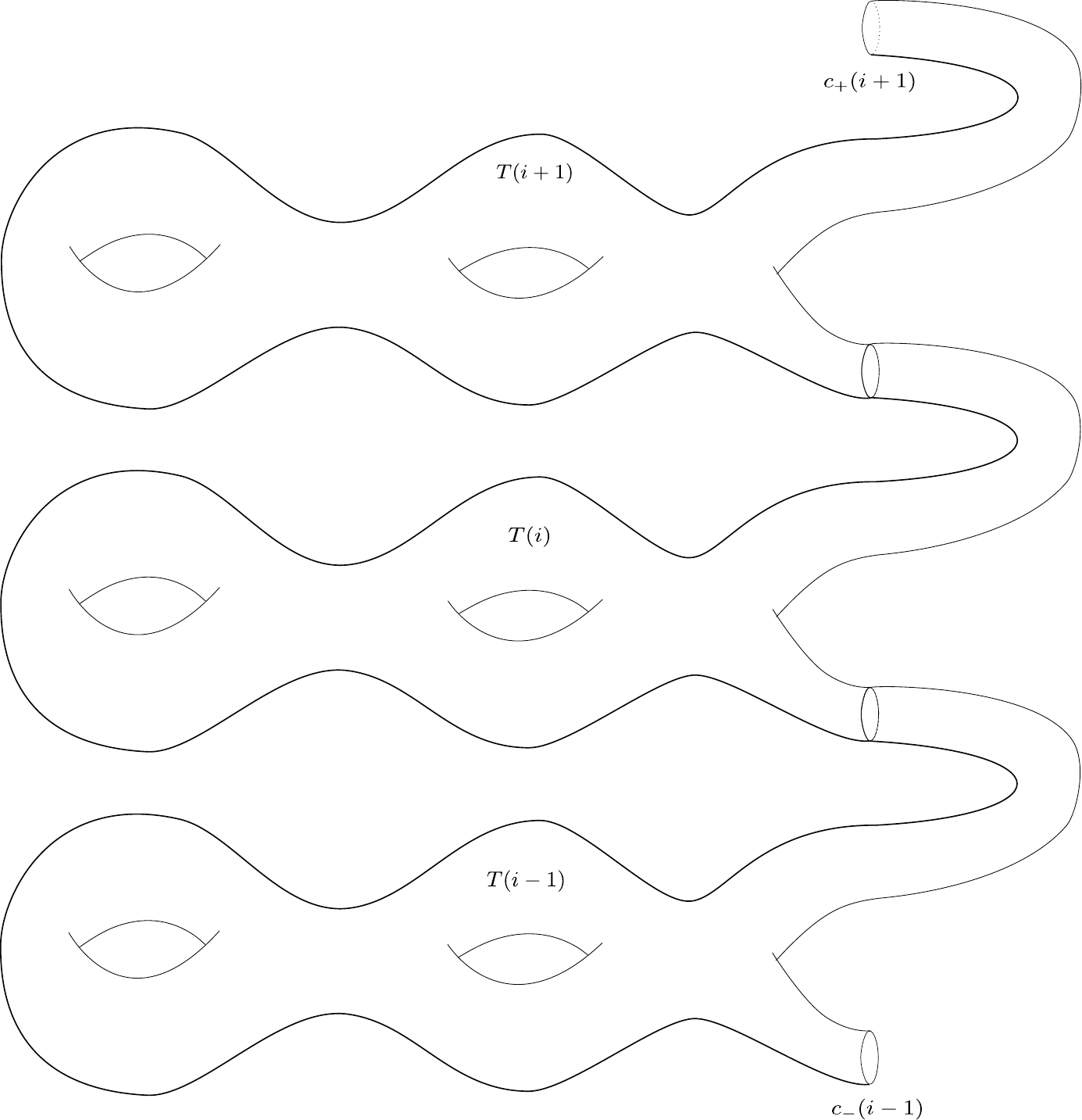}
\caption{A piece of the covering $\tilde S$}\label{figu_cov}
\end{figure}

\begin{proof}[Proof of \cref{ran}]
Let $c$ be a two-sided non-separating simple closed geodesic on $S$ of length $\ell=L(c)$ (see \cref{figu_init}).
Then $c$ has a tubular neighborhood $U$ which is isometric to $(-\rho,\rho)\times\R/\ell\Z$,
where $U$ is equipped with the Riemannian metric
\begin{align*}
  dr^2 + \cosh(r)^2ds^2
\end{align*}
and where $\rho>0$ is specified later. 
Cut $S$ along $c=\{r=0\}$ to obtain a connected surface $T$ with two boundary circles $c_-$ and $c_+$ and boundary collars \[C_-=(-\rho,0]\times\R/\ell\Z\quad\text{and}\quad C_+=[0,\rho)\times\R/\ell\Z\] containing them.
Let $\vf_-$ and $\vf_+$ be the Lipschitz functions on $T$ which are equal to $1$ on $T\setminus C_-$ and $T\setminus C_+$, vanish on $c_-$ and $c_+$, and are linear in $r\in(-\rho,0]$ and $r\in[0,\rho)$, respectively.
The Dirichlet integrals are
\begin{align*}
  \int_{T}|\nabla\vf_-|^2 = \int_{T}|\nabla\vf_+|^2 = \frac{\ell\sinh\rho}{\rho^2}
\end{align*}
since $|\nabla\vf_-|=1/\rho$ on $C_-$, $|\nabla\vf_+|=1/\rho$ on $C_+$, and $|C_-|=|C_+|=\ell\sinh\rho$.
Let $\ve>0$ and choose $k\ge1$ such that
\begin{align}\label{ksuf}
  k|S| = k|T| > \frac{2\ell\sinh\rho}{\rho^2\ve}.
\end{align}
Let $T(0),\dots,T(k+1)$ be $k+2$ copies of $T$ and attach $T(i)$ along $c_+(i)$ to $T(i+1)$ along $c_-(i+1)$,
for all $0\le i\le k$, to obtain a surface $R$ with two boundary circles and boundary collars $C_-(0)$ and $C_+(i+1)$ containing them (see Figures \ref{figu_build} and \ref{figu_cov}).
Let $\vf$ be the function on $R$ which is equal to $\vf_-$ on $T(0)$, to $\vf_+$ on $T(k+1)$, and to $1$ elsewhere.
Then the Rayleigh quotient of $\vf$ satisfies
\begin{align*}
  R(\vf) \le \frac{2\ell\sinh\rho}{\rho^2k|S|} < \ve.
\end{align*}
Now take $n$ copies $R_1,\dots,R_n$ of $R$, attach them naturally modulo $n$ and get $n$ copies $\vf_1,\dots,\vf_n$ of $\vf$, with pairwise disjoint supports (up to measure $0$), on the resulting closed surface $\tilde S$.
Here it is understood that each $\vf_i$ is extended by $0$ to all copies $R_j$ of $R$ with $j\ne i$.
Clearly, $\tilde S$ is a cyclic hyperbolic covering surface of $S$ of order $(k+2)n$ with $n$ non-vanishing Lipschitz functions $\vf_1,\dots,\vf_n$ with pairwise disjoint supports (up to measure $0$) and Rayleigh quotient $<\ve$.
In the case where $S$ is compact these functions immediately imply the existence of $n$ Neumann eigenvalues of $\tilde S$ in $[0,\ve)$, by the variational characterization of eigenvalues.

If $S$ is not compact, $\tilde S$ is still of finite area, and thus the ends of $\tilde S$ are cusps.
In particular, the essential spectrum of the Neumann extension $\Delta_N$ of the Laplacian on $\tilde S$ is contained in $[1/4,\infty)$,
by the characterization of the bottom of the essential spectrum in \eqref{ess}.
Now the supports of the above functions $\vf_1,\dots,\vf_n$ are not compact any more.
However, this can be remedied by cutting them off appropriately along the cusps of $\tilde S$ as in the proof of \cref{bus}.
Thus we obtain again the existence of $n$ Neumann eigenvalues of $\tilde S$ in $[0,\ve)$.

It remains to discuss the choice of $\rho$.
By \cref{coll}, the neighborhood of $c$ of width $\rho=\arsinh(1/\sinh(\ell/2))$ is of the form needed in the above argument, i.e., \eqref{ksuf} requires
\begin{align}\label{knee}
  k > \frac{2\ell}{\sinh(\ell/2)\arsinh(1/\sinh(\ell/2))^2\ve|S|}.
\end{align}
For $x>0$, we have
\begin{align*}
  \arsinh(1/\sinh x)
  &= \ln\frac{1+e^{-x}}{1-e^{-x}}
  = 2\artanh(e^{-x}) \\
  &= 2\big(e^{-x}+\frac{e^{-3x}}{3}+\frac{e^{-5x}}{5}+\cdots\big).
\end{align*}
Therefore the $\ell$-term on the right hand side of \eqref{knee} satisfies
\begin{align*}
  \frac{2\ell}{\sinh(\ell/2)\arsinh(1/\sinh(\ell/2))^2} 
  &
  = \frac{\ell}{2\sinh(\ell/2)\big(e^{-\ell/2}+e^{-3\ell/2}/3+\cdots\big)^2} \\
  &
  < \frac{\ell e^{\ell}}{2\sinh(\ell/2)}.
\end{align*}
Now the assertion follows from the first part of the proof. 
\end{proof}

\begin{rem}
A surface $S$ of finite type has two-sided non-separating simple closed curves if and only if its genus is positive.
\end{rem}

\begin{rem}
If $S$ is diffeomorphic to a closed orientable surface $S$ with at most one puncture, then a simple closed curve on $S$ is homologous to zero if and only if it is separating.
Thus the simple closed geodesics on $S$ in \cref{ran} are the homologically non-zero ones.
\end{rem}

\begin{thm}\label{rancor}
Let $\gamma,n\ge2$, $\ve>0$, and $k\ge2\ln(4\gamma-2)/\ve$.
Then, for any hyperbolic metric on $S_\gamma$, there is a cyclic hyperbolic cover $\tilde S\to S_\gamma$ of order $(k+2)n$ with at least $n$ eigenvalues in $[0,\ve)$.
\end{thm}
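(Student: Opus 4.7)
The plan is to derive \cref{rancor} as an immediate corollary of \cref{ran}, by producing on $S_\gamma$ a non-separating simple closed geodesic of length controlled by the genus. Since $S_\gamma$ is closed of area $|S_\gamma|=4\pi(\gamma-1)$ by Gauss--Bonnet, and orientable so that simple closed curves are automatically two-sided, it suffices to exhibit a non-separating simple closed geodesic of length $\ell\le 2\ln(4\gamma-2)$ and then verify that the hypothesis $k\ge 2\ln(4\gamma-2)/\varepsilon$ implies the hypothesis $k\ge \ell e^\ell/(2\sinh(\ell/2)\,\varepsilon|S_\gamma|)$ of \cref{ran}.

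The first step is the main obstacle. The analogous bound for the ordinary systole is a classical area-packing argument: at a point $p\in S_\gamma$ of minimum injectivity radius $r_0$, the embedded ball $B_{r_0}(p)$ has area $2\pi(\cosh r_0-1)\le 4\pi(\gamma-1)$, giving $r_0\le \cosh^{-1}(2\gamma-1)<\ln(4\gamma-2)$, and the shortest geodesic loop at $p$ is a simple closed geodesic of length $2r_0$. Upgrading to non-separating requires more topological input. One approach is to iterate the packing argument inside a component of the complement of the systole whenever the systole happens to be separating; alternatively, since the dual graph of any pants decomposition of $S_\gamma$ has $3\gamma-3$ edges on $2\gamma-2$ vertices and thus cannot be a tree for $\gamma\ge 2$, at least one curve in the decomposition is non-separating, and a sharper packing or lattice-counting estimate can be used to locate such a non-separating curve with the desired logarithmic length bound.

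The second step is a short verification. Rewriting $\ell e^\ell/(2\sinh(\ell/2))=\ell e^{\ell/2}/(1-e^{-\ell})$ and using $1-e^{-\ell}\ge 1/2$ (automatic since $\ell\ge 2\ln 6>\ln 2$), this quantity is at most $2\ell e^{\ell/2}\le 4(4\gamma-2)\ln(4\gamma-2)$. Dividing by $|S_\gamma|=4\pi(\gamma-1)$ and using $(4\gamma-2)/(\gamma-1)\le 6$ for $\gamma\ge 2$ together with $6/\pi<2$, the right hand side is bounded by $2\ln(4\gamma-2)$, so the hypothesis of \cref{rancor} indeed implies that of \cref{ran}. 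Since $S_\gamma$ is compact the restriction $\varepsilon\le 1/4$ in \cref{ran} is unnecessary, and Neumann eigenvalues coincide with ordinary eigenvalues; \cref{ran} then produces the cyclic hyperbolic cover of order $(k+2)n$ with at least $n$ eigenvalues in $[0,\varepsilon)$.
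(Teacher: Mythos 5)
Your overall strategy coincides with the paper's: bound the length of a shortest two-sided non-separating simple closed geodesic by $2\ln(4\gamma-2)$ and then check that the hypothesis $k\ge 2\ln(4\gamma-2)/\ve$ implies the hypothesis of \cref{ran}. Your arithmetic in the second step matches the paper's (modulo the point below). But the first step, which you correctly identify as ``the main obstacle,'' is not actually carried out, and the routes you sketch for it do not close the gap. The paper's proof hinges on Parlier's \cref{parl}, specifically the equality $\Sys^*(S)=\Sys(S)$ for closed orientable $S$: since $\Sys(S)$ is achieved by a metric whose systolic geodesic is non-separating, the ball-packing bound $2\pi(\cosh(\ell/2)-1)\le|S|$ applies at that extremal metric, and since $\sys^*(S,g)\le\Sys^*(S)=\Sys(S)$ for the given metric $g$, one gets $\cosh(\ell/2)<2\gamma-1$ directly. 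Neither of your two alternatives substitutes for this. Iterating the packing argument in a complementary component of a separating systole is not developed, and in any case the components have boundary and smaller area, so the constants do not obviously come out to $2\ln(4\gamma-2)$. Your pants-decomposition observation (the dual graph cannot be a tree) does guarantee a non-separating curve in any pants decomposition, but to get a length bound you would need a decomposition with all curves short, i.e.\ Bers' theorem, whose constant grows linearly in $\gamma$ rather than logarithmically. So this approach would yield a strictly weaker hypothesis on $k$.

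A smaller issue: in your second step you justify $1-e^{-\ell}\ge 1/2$ by asserting $\ell\ge 2\ln 6$, but you only have the \emph{upper} bound $\ell<2\ln(4\gamma-2)$; nothing prevents $\ell$ from being small. The paper avoids this by observing that $\ell e^\ell/2\sinh(\ell/2)$ is monotonically increasing in $\ell$ and substituting $\ell=2\ln(4\gamma-2)$ before estimating; if you add that monotonicity remark your chain of inequalities becomes valid at that endpoint, where $e^{-\ell}\le 1/36<1/2$ does hold.
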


\begin{proof}
By the Gauss-Bonnet formula, the area of any hyperbolic metric on $S=S_\gamma$ is $2\pi(2\gamma-2)$.
Clearly, the systole of any hyperbolic metric on $S$ is twice its injectivity radius.
On the other hand, we have $\Sys^*(S)=\Sys(S)$, by \cref{parl}.
We conclude that, for a given hyperbolic metric on $S$, a shortest two-sided non-separating simple closed geodesic has length $\ell$ satisfying
\begin{align*}
  2\pi(\cosh(\ell/2)-1) = |B(\ell/2)| < |S| = 2\pi(2\gamma-2).
\end{align*}
That is, $\cosh(\ell/2)<2\gamma-1$ and, in particular, $\ell<2\ln(4\gamma-2)$.
Now the function $\ell e^{\ell}/2\sinh(\ell/2)$ is monotonically increasing.
Therefore we get that
\begin{align*}
  \frac{\ell e^\ell}{2\sinh(\ell/2)\ve|S|}
  &< \frac{(4\gamma-2)^2\ln(4\gamma-2)}{2\sinh(\ln(4\gamma-2))\ve\pi(2\gamma-2)} \\
  &< \frac{2(4\gamma-2)^2\ln(4\gamma-2)}{(4\gamma-2)\ve\pi(2\gamma-2)} \\
  &< \frac{6\ln(4\gamma-2)}{\ve\pi} < \frac{2\ln(4\gamma-2)}{\ve},
\end{align*}
where we use that $\ln(4\gamma-2)\ge\ln6>1$ and that $\sinh x>e^x/4$ for $x>1$.
Now the assertion follows from \cref{ran}.
\end{proof}

\begin{rem}
A corresponding result holds for hyperbolic surfaces of finite area with one cusp.
In general, the estimate on the shortest possible length of two-sided non-separating simple closed geodesics of complete hyperbolic geodesics will depend on the topology of the surface; see \cite{Pa}. 
\end{rem}

\section{Topology of nodal sets and small eigenvalues}
\label{secns}
For a surface $S$ and a smooth function $\vf$ on $S$, the set
\begin{equation*}
  Z_\vf = \{x \in S \mid \varphi(x) =0\}
\end{equation*}
is called the \emph{nodal set} of $\vf$.
Furthermore, if $S$ is Riemannian and $h$ a smooth function on $S$,
then a solution of the Schr\"odinger equation $(\Delta + h)\varphi=0$ is called an \emph{$h$-harmonic function}.
In \cite{Che}, S.\,Y. Cheng proved the following structure theorem for nodal sets of $h$-harmonic functions on $S$.

\begin{thm}[Cheng]\label{Cheng}
Let $S$ be a Riemannian surface and $h$ a smooth function on $S$.
Then any $h$-harmonic function $\vf$ on $S$ satisfies:
\begin{enumerate}
\item
The critical points of $\vf$ on $Z_\varphi$ are isolated.
\item
When the nodal lines meet, they form an equiangular system.
\item
The nodal lines consist of a number of $C^2$-immersed one-dimensional submanifolds.
In particular, if $S$ is closed, then $Z_\vf$ is a finite union of $C^2$-immersed circles.
\end{enumerate}
\end{thm}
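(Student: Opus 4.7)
The plan is to localize via isothermal coordinates, reduce to a planar Schr\"odinger-type equation, and apply the Bers--Hartman--Wintner normal form for vanishing points of its solutions.

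First, around any point $p\in S$ choose isothermal coordinates $(x,y)$ centered at $p$ in which the metric takes the form $\rho(x,y)(dx^2+dy^2)$ for some smooth positive $\rho$. The equation $(\Delta+h)\vf=0$ rewrites as $\Delta_0\vf+\rho h\vf=0$, where $\Delta_0$ is the Euclidean Laplacian; elliptic regularity makes $\vf$ smooth, and strong unique continuation ensures that on the connected component of $p$ the function $\vf$ either is identically zero (a degenerate case excluded when one talks about a genuine nodal set) or vanishes to finite order at $p$.

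Second, I would invoke the theorem of Bers (equivalently, Hartman--Wintner): given such $\vf$ with $\vf(p)=0$, there exist an integer $N=N(p)\ge 1$ and a nonzero homogeneous harmonic polynomial $P_N$ of degree $N$ on $\R^2$ such that
\[
  \vf(x)=P_N(x)+o(|x|^N),\qquad \nabla\vf(x)=\nabla P_N(x)+o(|x|^{N-1}),
\]
as $x\to 0$. In two variables every such $P_N$ takes, in polar coordinates, the form $c\,r^N\cos(N(\theta-\theta_0))$, whose zero set is $2N$ rays from the origin at consecutive angles $\pi/N$ apart. This expansion is the sole nontrivial analytic input, normally obtained by expanding $\vf$ on small geodesic disks in the Euclidean-harmonic basis $\{r^n\cos(n\theta),r^n\sin(n\theta)\}$ and iteratively absorbing the inhomogeneous term $\rho h\vf$.

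Third, the three assertions follow mechanically. A point $p\in Z_\vf$ is critical precisely when $N(p)\ge 2$; the gradient expansion yields $|\nabla\vf(x)|\ge c|x|^{N-1}>0$ on a punctured neighborhood, making $p$ an isolated critical point on $Z_\vf$, proving (1). For (2), at such a $p$ the gradient $\nabla P_N$ is nonzero and transverse to each of the $2N$ nodal rays, so the implicit function theorem applied to $\vf$ (together with a standard comparison argument in an annular region using the Bers expansion) produces a $C^\infty$ arc through $p$ tangent to each ray; since the rays of $P_N$ are equiangular, so are the tangents of $Z_\vf$ at $p$. At non-critical zeros ($N=1$) the implicit function theorem applies directly, so combining these two cases gives the $C^2$-immersed (indeed $C^\infty$) structure in (3). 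For closed $S$, compactness together with the isolation statement (1) yields finitely many critical zeros; the nodal set is then a finite $1$-complex whose vertices have even valence $2N$, and tracing the arcs exhibits $Z_\vf$ as a finite disjoint union of immersed circles. The main obstacle is the Bers--Hartman--Wintner expansion itself together with its differentiated version controlling $\nabla\vf$; once these are in hand, the rest of the argument is local differential geometry plus the implicit function theorem.
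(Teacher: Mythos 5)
The paper does not actually present a proof of \cref{Cheng}: it is stated as Cheng's theorem and attributed to \cite{Che} without argument. So there is no ``paper's own proof'' to compare against, only Cheng's original one. Your sketch reconstructs that original proof faithfully. Cheng likewise reduces via isothermal coordinates to the scalar equation $\Delta_0\vf+\rho h\vf=0$ on a planar domain, invokes the Bers / Hartman--Wintner local normal form $\vf=P_N+o(|x|^N)$ (together with the differentiated version for $\nabla\vf$) at a zero of finite order $N$, and reads off the three assertions exactly as you do: isolation of critical zeros from the lower bound $|\nabla\vf|\gtrsim|x|^{N-1}$, the equiangular structure from the explicit nodal rays of the harmonic polynomial $P_N$, and the immersed-circle decomposition from the even valence $2N$ at each vertex of the resulting nodal graph on a closed surface.

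One small caution: you assert in passing that the nodal arcs are ``indeed $C^\infty$.'' Away from the critical zeros this follows from the implicit function theorem and elliptic regularity, but through a zero of order $N\ge 2$ the Hartman--Wintner/Bers machinery, as used by Cheng, delivers only $C^1$ (after a bit more work $C^{1,\alpha}$ or $C^2$) regularity of the arcs at the singular point; that is precisely why the theorem is stated with $C^2$-immersed rather than smooth curves. The rest of your mechanism is sound: the equiangularity pairs up the $2N$ local rays into $N$ arcs crossing through each critical zero, and finiteness of critical zeros on a closed $S$ together with compactness yields a finite $1$-complex which decomposes into finitely many immersed circles.
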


Each connected component of $S\setminus Z_\vf$ is called a \emph{nodal domain} of $\vf$.
Using the above structure of nodal sets and the \emph{Courant nodal domain theorem} (see \cite{Che}),
Cheng then proved bounds on the multiplicity of the $i$-th eigenvalue in terms of $i$ and the Euler characteristic of the surface.
These methods for bounding multiplicities of eigenvalues have proved to be fruitful in general. 

\subsection{S\'{e}vennec's idea}\label{sevennec}

The multiplicity of the first eigenvalue gained more interest than the others (see \cite{C}, \cite{Se} and references therein).
(Note that $\lambda_0$ is simple by Courant's nodal domain theorem.)
In \cite{Se}, B.\,S\'{e}vennec took a leap of thoughts and obtained  a significant improvement of the then best known bound on the multiplicity of the first eigenvalue of closed surfaces with negative Euler characteristic.

\begin{thm}[S\'{e}vennec]\label{seven}
If $S$ is a closed Riemannian surface with $\chi(S)<0$, then the multiplicity of the first eigenvalue $\lambda_1$ of a Schr\"odinger operator $\Delta+h$ on $S$ is at most $5-\chi(S)$.
\end{thm}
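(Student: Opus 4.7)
The plan is to bound $m := \dim E$, where $E$ is the $\lambda_1$-eigenspace of $\Delta+h$, by the estimate $m \leq 5-\chi(S)$, via a topological analysis of the evaluation map of $E$ at points of $S$. We may assume $m \geq 2$, as otherwise the bound is immediate from $\chi(S) < 0$.

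First, I would invoke the Courant nodal domain theorem to conclude that every nonzero $\varphi \in E$ has exactly two nodal domains, and combine this with \cref{Cheng} to get that $Z_\varphi$ is a finite union of $C^2$-immersed circles which splits $S$ into two connected pieces $S^\pm_\varphi$. In particular $[Z_\varphi]=0$ in $H_1(S;\Z/2)$.

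Next, I would set up the evaluation map $\Phi\colon S \to P(E^*) \cong \R P^{m-1}$ by $\Phi(x)=[\mathrm{ev}_x]$, with $\mathrm{ev}_x(\varphi)=\varphi(x)$. Unique continuation for solutions of $(\Delta+h)\varphi=0$ ensures that the common zero locus of all elements of $E$ has empty interior; the finitely many remaining bad points can be handled by a small generic perturbation of $h$ or by a careful continuous extension. For each $[\varphi]\in P(E)$ the preimage of the corresponding projective hyperplane $H_\varphi$ is exactly $Z_\varphi$. Since $[Z_\varphi]=0$ mod $2$, pulling back the generator $w_1$ of $H^1(\R P^{m-1};\Z/2)$ gives $\Phi^*(w_1)=0$, so $\Phi$ lifts to $\tilde\Phi\colon S\to S^{m-1}$. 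This lift encodes the restrictive property that for every $v\in\R^m$ the level set $\{\langle\tilde\Phi,v\rangle=0\}$ separates $S$ into two connected pieces.

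The final step, which I expect to be the main obstacle, converts this separation property into the bound $m\le 5-\chi(S)$. The plan is to pick $m+1$ eigenfunctions $\varphi_0,\dots,\varphi_m\in E$ in general position, with mutually transverse nodal sets having only double crossings. Their union determines a CW-decomposition of $S$ whose $2$-cells are the nonempty sign regions $R_\epsilon=\{x\in S:\mathrm{sign}(\varphi_i(x))=\epsilon_i\ \forall i\}$, indexed by $\epsilon\in\{\pm\}^{m+1}$. The hypothesis that each $Z_{\varphi_i}$ has only two complementary components forces most of the $2^{m+1}$ potential sign patterns to be empty and severely restricts the adjacency structure of the $R_\epsilon$. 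Computing $\chi(S)=V-E+F$ for this arrangement and combining with the separation-based combinatorial inequalities should yield the desired bound. This enumeration, which decisively uses $\chi(S)<0$, is the technical heart of S\'evennec's approach and the place where the novelty lies.
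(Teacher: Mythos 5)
Your proposal takes a genuinely different route from S\'evennec's, but the final and decisive step is missing, and the route you chose is one that historically does not produce the linear bound $5-\chi(S)$.

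The paper's argument (following \cite{Se}) works on the other side of the duality: it does not map $S$ into $P(E^*)$, but instead stratifies the unit sphere $S^d\subset E$ (equivalently $P^d=P(E)$) by the topological type of the pair of nodal domains, setting $\mathcal{S}_j=\{\vf\in S^d \mid b_1(\Omega_\vf^+)+b_1(\Omega_\vf^-)=j\}$ for $1<j\le b_1(S)$ and $\mathcal{S}_1$ the remainder. The key inputs are (i) the Borsuk--Ulam type result (\cref{borsuk-ulam}) that for any antipodally invariant decomposition $P^d=\bigcup_i\mathcal{P}_i$, one has $d+1\le\sum\ell_i$ where $\ell_i$ is the cup-length of the restricted characteristic class $\alpha|_{\mathcal{P}_i}$, and (ii) the delicate computation that $\ell_1\le 4$ and $\ell_j\le 1$ for $j>1$, via the fact that a small perturbation within one stratum cannot interchange the two nodal domains because at least one has negative Euler characteristic. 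Summing gives $d+1\le 4+(b_1(S)-1)=5-\chi(S)$. Nothing in your outline plays the role of this cup-length bookkeeping.

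Two concrete gaps. First, the evaluation map $\Phi\colon S\to P(E^*)$ is not defined on the common zero locus of $E$, which by Cheng's theorem is a (possibly nonempty) finite set; your suggestion to ``perturb $h$'' does not help, because the theorem is about the given operator and a perturbation changes $\lambda_1$ and $E$ entirely, while a ``careful continuous extension'' across such points is a real obstruction that you do not resolve. Second, and more importantly, the last paragraph is a placeholder, not an argument: choosing $m+1$ eigenfunctions in general position and computing $V-E+F$ for the induced arrangement is exactly the Besson/Cheng strategy, and that strategy is known to give bounds like $4\gamma+3$ (quadratic in general genus-counting setups), not $2\gamma+3=5-\chi(S_\gamma)$. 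You correctly identify this step as ``the technical heart,'' but you do not supply it, and the combinatorics of sign regions alone do not encode enough to recover S\'evennec's improvement; that improvement comes precisely from the cohomological ideal-valued/length argument restricted to the strata of $P(E)$, which is absent from your plan.
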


The ideas in S\'{e}vennec's approach proved to be fruitful in the work of Otal \cite{Ot}, Otal-Rosas \cite{OR}, and our work in \cite{BMM1,BMM2}.
S\'{e}vennec started by proving a Borsuk-Ulam type theorem (see Lemmata 7 and 8 in \cite{Se}) which has the following consequence.

\begin{lem} \label{borsuk-ulam}
Let $\cup_{i=1}^k \mathcal{P}_i = P^d$ be a decomposition of the $d$-dimensional real projective space into $k$ subsets.
Assume that the characteristic class $\alpha$ of the standard covering map $\pi\colon S^d \to P^d$ satisfies $(\alpha|_{\mathcal{P}_i})^{\ell_i}=0$, for all $1\le i\le k$.
Then $d+1\le\ell_1+\dots+\ell_k$.
\end{lem}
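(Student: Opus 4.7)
The plan is to run the classical cup-length argument from Lusternik--Schnirelmann category theory, using that
\[
H^*(P^d;\mathbb{Z}/2) \;=\; \mathbb{Z}/2[\alpha]/(\alpha^{d+1}),
\]
so $\alpha^d\ne 0$ while $\alpha^{d+1}=0$, and $\alpha$ is precisely the characteristic class of the double cover $\pi\colon S^d\to P^d$. The whole point is to show that the hypothesis forces $\alpha^{\ell_1+\cdots+\ell_k}=0$, after which the inequality $d+1\le\ell_1+\cdots+\ell_k$ is immediate from the cohomology ring of $P^d$.

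First I would reduce to a covering by open sets. Since the statement concerns a decomposition $\bigcup \mathcal{P}_i=P^d$ whose pieces need not be open, I would thicken each $\mathcal{P}_i$ slightly to an open $U_i\supset\mathcal{P}_i$ with $\bigcup U_i=P^d$ and with the vanishing condition $(\alpha|_{U_i})^{\ell_i}=0$ preserved. For reasonable pieces (which is the context in which the lemma is applied) this is an exercise in taking sufficiently small neighborhoods; otherwise one works with \v Cech cohomology from the outset, since $\check H^1(\mathcal P_i;\mathbb{Z}/2)$ is naturally a direct limit over open neighborhoods of $\mathcal P_i$.

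Second, from the long exact sequence of the pair $(P^d,U_i)$, the vanishing of $\alpha^{\ell_i}$ in $H^{\ell_i}(U_i;\mathbb{Z}/2)$ means that $\alpha^{\ell_i}$ admits a lift to a relative class
\[
\tilde\alpha_i\in H^{\ell_i}(P^d,U_i;\mathbb{Z}/2),
\]
mapping to $\alpha^{\ell_i}$ under the natural map $H^{\ell_i}(P^d,U_i)\to H^{\ell_i}(P^d)$. I then form the relative cup product
\[
\tilde\alpha_1\smile\cdots\smile\tilde\alpha_k \in H^{\ell_1+\cdots+\ell_k}\!\left(P^d,\,U_1\cup\cdots\cup U_k;\,\mathbb{Z}/2\right).
\]
Because $U_1\cup\cdots\cup U_k=P^d$, the target group vanishes, so the product of the relative lifts is zero. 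Applying the map to absolute cohomology and using naturality of cup product, I conclude $\alpha^{\ell_1+\cdots+\ell_k}=0$ in $H^{*}(P^d;\mathbb{Z}/2)$. Since $\alpha^m\ne 0$ for all $m\le d$, this forces $\ell_1+\cdots+\ell_k\ge d+1$.

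The main obstacle is the first step: ensuring that the hypothesis $(\alpha|_{\mathcal P_i})^{\ell_i}=0$ persists after thickening, which is what lets one pass from restriction cohomology to relative cohomology. Everything else is formal naturality of cup products and the structure of $H^*(P^d;\mathbb{Z}/2)$. In S\'evennec's setting the pieces $\mathcal{P}_i$ are built from nodal structures of eigenfunctions, so they are tame enough (e.g.\ compact CW-type subsets) for the neighborhood argument to go through without difficulty; alternatively, one can interpret $H^1(\mathcal P_i;\mathbb{Z}/2)$ via \v Cech cohomology so that the lift to an open neighborhood exists tautologically, avoiding any regularity hypothesis on the $\mathcal{P}_i$.
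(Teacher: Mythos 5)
Your proof is correct and coincides with the cup-length argument underlying S\'evennec's Lemmata 7 and 8, to which the paper appeals without reproducing a proof. The one genuine subtlety---upgrading the vanishing of $(\alpha|_{\mathcal{P}_i})^{\ell_i}$ to the vanishing of $\alpha^{\ell_i}$ on an open neighbourhood $U_i$, so that relative cup products in $H^*(P^d,U_i;\mathbb{Z}/2)$ are available---is handled correctly by appealing to tautness of \v{C}ech (Alexander--Spanier) cohomology for subsets of the compact metric space $P^d$.
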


By elliptic regularity, the eigenspace $E_1$ of $\lambda_1$ as in \cref{seven} is finite dimensional. 
Now consider some norm on $E_1$ (all are equivalent),
and, with respect to this norm, consider the unit sphere $S^d$ in $E_1$, 
where $d+1=\dim E_1$ is the multiplicity of $\lambda_1$. 
S\'{e}vennec's investigation of the Borsuk-Ulam theorem was motivated by the fact that each non-zero eigenfunction $\vf\in E_1$ has exactly two nodal domains, $\Omega_\vf^-=\{\vf<0\}$ and $ \Omega_\vf^+=\{\vf>0\}$, which gives a decomposition of $S^d$ into the strata
\begin{align*}
  \mathcal{S}_1 &= \{\vf \in S^d \mid b_1(\Omega_\vf^+) + b_1(\Omega_\vf^-) \le1 \}, \\
  \mathcal{S}_j &= \{\vf \in S^d \mid b_1(\Omega_\vf^+) + b_1(\Omega_\vf^-) = j \}, \; 1<j\le b_1(S),
\end{align*}
where $b_1$ indicates the first Betti number.
Cleary, each $\mathcal{S}_j$ is invariant under the antipodal map of $S^d$.
Discussing the properties of this decomposition of $P^d$ into the strata $\mathcal{P}_i=\pi(\mathcal{S}_i)$ covers a significant part of \cite{Se}.
The main results are $\ell_1=4$ and $\ell_j=1$ for $1<j\le b_1(S)$ (\cite[Theorem 9]{Se}).

\subsection{Otal's adaptation to small eigenvalues}\label{otal:multi}

In \cite{Ot}, Otal adapted this whole line of thoughts to bound the multiplicity of small eigenvalues on hyperbolic surfaces of finite area.
Recall that $1/4$ is bottom of the spectrum of the Laplacian on the hyperbolic plane $\mathbb{H}^2$.
If  $\Omega$ in $\mathbb{H}^2$ is a bounded domain with piecewise smooth boundary, $\lambda_0(\Omega)$ is the first Dirichlet eigenvalue of $\Omega$.
Hence, by domain monotonicity of the first Dirichlet eigenvalue, we get the strict inequality $\lambda_0(\Omega) > 1/4$.

\begin{thm}[Otal] \label{otal_mult}
For a complete hyperbolic surface $S$ of finite area, the multiplicity of an eigenvalue of $S$ in $(0,1/4]$ is at most $-\chi(S) -1$.
\end{thm}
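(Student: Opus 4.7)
The plan is to adapt S\'evennec's Borsuk--Ulam strategy, described in \cref{sevennec}, to the eigenspace of $\lambda\in(0,1/4]$. Let $m=\dim E_\lambda$ denote the multiplicity; the aim is to show $m\le-\chi(S)-1$. Consider the projective space $P^{m-1}$ of $E_\lambda$, stratify it according to the topological type of the nodal decomposition of the associated eigenfunctions, and apply \cref{borsuk-ulam} to the resulting decomposition.

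The crucial new input, compared to S\'evennec's treatment of $\lambda_1$, is a topological lemma on nodal domains that exploits the bound $\lambda\le1/4$. For any $\vf\in E_\lambda\setminus\{0\}$ and any nodal domain $\Omega$ of $\vf$, the restriction $\vf|_\Omega$ is a Dirichlet eigenfunction of $\Omega$ with eigenvalue $\lambda$, so by domain monotonicity $\lambda_0(\Omega)\le\lambda\le 1/4$. On the other hand, any relatively compact domain $D\subset\H^2$ with piecewise smooth boundary satisfies $\lambda_0(D)>1/4$ strictly. A simply connected nodal domain compactly contained in $S$ lifts isometrically to such a $D$, and an analogous argument handles annular components. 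It follows that every nodal domain $\Omega$ is incompressible in $S$ and either carries a homotopically essential loop or extends into a cusp, and in either case $\chi(\Omega)\le 0$.

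With this topological constraint in hand, one follows S\'evennec. The $C^2$-immersed nodal curves from \cref{Cheng} allow the strata
\[
  \mathcal{S}_j=\{\vf\in S^{m-1}\mid b_1(\Omega_\vf^+)+b_1(\Omega_\vf^-)=j\}
\]
to be organised into a finite antipodally invariant decomposition of $S^{m-1}\subset E_\lambda$, descending to strata $\mathcal{P}_j\subset P^{m-1}$. A transversality and semicontinuity analysis in the spirit of \cite[Theorem 9]{Se} produces exponents $\ell_j$ with $(\alpha|_{\mathcal{P}_j})^{\ell_j}=0$, where $\alpha$ is the characteristic class of the double cover $S^{m-1}\to P^{m-1}$. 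The topological lemma from the previous paragraph bounds both the number of occurring strata and the values of $\ell_j$ in terms of $-\chi(S)$. Plugging into \cref{borsuk-ulam} gives $m\le\sum_j\ell_j\le-\chi(S)-1$, as desired.

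The principal obstacle is the topological classification of nodal domains under the hypothesis $\lambda\le1/4$. S\'evennec's original $\lambda_1$ setting is simplified by Courant's theorem, which forces exactly two nodal domains; in our setting the nodal decompositions can be arbitrarily complicated, and what makes the argument work precisely at the threshold $1/4$ is the strict inequality $\lambda_0(D)>1/4$ for relatively compact hyperbolic domains. A secondary difficulty is the correct treatment of nodal domains extending into cusps; this is where the finite-area hypothesis is essential, since cusps keep the essential spectrum starting at $1/4$, whereas funnels would not and the argument would break on surfaces with funnels.
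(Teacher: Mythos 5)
Your proposal correctly identifies the analytic heart of the argument: for any nodal domain $\Omega$ of a $\lambda$-eigenfunction with $\lambda\le1/4$ one has $\lambda_0(\Omega)=\lambda$, and lifting a disk to $\H^2$ (or an annulus to a cyclic cover with bottom of spectrum $1/4$, via Brooks) contradicts $\lambda_0>1/4$ there. This is exactly \cref{incom_nodal}. Note, though, that this rules out disks, annuli, and M\"obius bands, so the correct conclusion is the strict inequality $\chi(\Omega)<0$, not $\chi(\Omega)\le0$ as you write; the strictness matters below.

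The genuine gap is the passage from this lemma to the bound $-\chi(S)-1$. You retain S\'evennec's stratification by $b_1(\Omega_\vf^+)+b_1(\Omega_\vf^-)$ and then assert, without argument, that the topological lemma bounds the number of strata and the exponents $\ell_j$ so that $\sum_j\ell_j\le-\chi(S)-1$. This is precisely where Otal departs from S\'evennec, and it is not a detail. S\'evennec's computation $\ell_1=4$ for his lowest stratum relies on Courant's theorem (a $\lambda_1$-eigenfunction has exactly two nodal domains), which is unavailable for a higher eigenvalue, and in any case leads only to the bound $5-\chi(S)$; moreover for higher eigenvalues $\{\vf>0\}$ and $\{\vf<0\}$ may each have many components, so the $b_1$-count is not easily controlled. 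The paper instead stratifies by Euler characteristic, $\mathcal{S}_i=\{\vf\in S^d\mid\chi(S\setminus Z_\vf)=-i\}$. The strict inequality $\chi(\Omega)<0$ together with the Euler--Poincar\'e formula gives $\chi(S)\le\chi(S\setminus Z_\vf)\le-2$, so only $i=2,\dots,-\chi(S)$ occur, i.e.\ at most $-\chi(S)-1$ non-empty strata. Equally essential, and missing from your plan, is the argument that $\ell_i=1$ for \emph{every} stratum: no isotopy of $S$ can interchange two disjoint subsurfaces with piecewise smooth boundary once at least one has negative Euler characteristic, and applying this to $C^+_\vf=\{\vf>0\}$ and $C^-_\vf=\{\vf<0\}$ shows a connected component of $\mathcal{S}_i$ cannot contain both $\vf$ and $-\vf$, so the covering is trivial over each $\mathcal{P}_i$. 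With these two changes, \cref{borsuk-ulam} gives $d+1\le\sum_i\ell_i\le-\chi(S)-1$; without them, the plan does not reach the stated bound.
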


\begin{rem}
In \cite{Ot}, Otal also proves a similar result on the multiplicity of \emph{cuspidal eigenvalues} of $S$ in $(0,1/4]$.
\end{rem}

Observe that the eigenvalues considered now need not be the first non-zero eigenvalue.
Hence S\'{e}vennec's ideas can not be applied directly. 
To remedy this, Otal starts with a key observation that provides a strong constraint on the topology of nodal sets and nodal domains of eigenfunctions with eigenvalue $\lambda\in(0,1/4]$.

\begin{lem}\label{incom_nodal}
Let $S$ be a closed hyperbolic surface and $\varphi$ be a non-trivial eigenfunction of $S$ with eigenvalue $\lambda\in(0,1/4]$.
Then the nodal set of $\vf$ is incompressible and any nodal domain of $\vf$ has negative Euler characteristic.
\end{lem}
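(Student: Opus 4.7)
The strategy exploits that on each nodal domain $\Omega$, the restriction $\varphi|_\Omega$ has constant sign and vanishes on $\partial\Omega$, so it is the first Dirichlet eigenfunction of $\Omega$ and $\lambda_0(\Omega)=\lambda$. The plan is to derive a contradiction with the hypothesis $\lambda\le 1/4$ whenever one of the two conclusions fails, by producing a nodal domain with $\lambda_0(\Omega)>1/4$.

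For incompressibility together with the disk case of the Euler characteristic claim, suppose some simple closed curve $c\subset Z_\varphi$ bounds a disk $D\subset S$. Since $\varphi$ is non-trivial, there is a point $p\in D\setminus Z_\varphi$, and its nodal domain $\Omega$ satisfies $\Omega\subset D$ because $D$ is a component of $S\setminus c$. As $D$ is simply connected, it lifts homeomorphically to a bounded disk $\tilde D\subset\mathbb{H}^2$, carrying $\Omega$ to a bounded domain $\tilde\Omega\subset\tilde D$ on which the lift of $\varphi$ is the first Dirichlet eigenfunction with eigenvalue $\lambda$. The strict inequality $\lambda_0>1/4$ for bounded domains in $\mathbb{H}^2$, recalled just before the lemma, then contradicts $\lambda_0(\tilde\Omega)=\lambda\le 1/4$.

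Next, suppose $\Omega$ is a nodal domain with $\chi(\Omega)=0$; by incompressibility it must be an annulus or a M\"obius band, with $\pi_1(\Omega)\cong\mathbb{Z}$ injecting into $\pi_1(S)$. The M\"obius case (which occurs only when $S$ is non-orientable) reduces to the annulus case by lifting to the orientation double cover of $S$, where the preimage of $\Omega$ is an annular nodal domain of the lifted eigenfunction. For the annulus case, let $\hat S\to S$ be the cyclic cover corresponding to $\pi_1(\Omega)$. Since the core of $\Omega$ is freely homotopic to a simple closed geodesic of $S$, the surface $\hat S$ is isometric to a hyperbolic cylinder with two funnel ends, and $\Omega$ lifts homeomorphically to a relatively compact annulus $\hat\Omega\subset\hat S$; hence $\lambda_0(\hat\Omega)=\lambda$. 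The funnel ends give $\lambda_0(\hat S)=1/4$, and domain monotonicity yields $\lambda_0(\hat\Omega)\ge 1/4$.

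The main obstacle is to upgrade this to the strict inequality $\lambda_0(\hat\Omega)>1/4$. If equality held, then extending the positive first Dirichlet eigenfunction of $\hat\Omega$ by zero to $\hat S$ would give an $H^1$ minimizer of the Rayleigh quotient on $\hat S$, hence by elliptic regularity a smooth $L^2$ eigenfunction at the value $1/4$. Such a function vanishes on the non-empty open set $\hat S\setminus\overline{\hat\Omega}$, contradicting elliptic unique continuation; equivalently, its existence violates the Lax-Phillips fact recalled in \cref{laxp} that hyperbolic surfaces carrying a funnel admit no $L^2$ eigenfunctions at eigenvalues $\ge 1/4$. Therefore $\lambda_0(\hat\Omega)>1/4$, contradicting $\lambda\le 1/4$.
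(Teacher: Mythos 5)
Your proof is correct and follows essentially the same approach as the paper's sketch: identify $\lambda_0(\Omega)=\lambda$ for each nodal domain, lift a putative offending domain to a cover where $\lambda_0=1/4$ (the universal cover $\mathbb H^2$ for the disk/compressibility case, a cyclic cover for the annulus/M\"obius case), and contradict $\lambda\le 1/4$ via the strict inequality $\lambda_0(\hat\Omega)>1/4$ for relatively compact subdomains. You spell out the strictness step explicitly, via the Rayleigh-quotient minimizer together with elliptic regularity and unique continuation (or, equivalently, the Lax--Phillips fact of \cref{laxp}), and you streamline the incompressibility argument by lifting whatever nodal domain lies inside the compressible disk rather than first exhibiting a disk-shaped nodal domain, but these are expository refinements of the same route, which the paper's sketch leaves under ``similar arguments'' and a reference to Brooks.
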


Here we say that a subset  $G\subseteq S$ is \emph{incompressible} if each loop in $G$, that is homotopically trivial in $S$, is already homotopically trivial in $G$.

\begin{proof}[Sketch of proof of \cref{incom_nodal}]
Observe that, for any nodal domain $\Omega$ of $\vf$, we have $\lambda_0(\Omega) = \lambda$. 
This follows immediately from the observations that $(i)~ \vf|_\Omega$ satisfies the eigenvalue equation on $\Omega$, $(ii)~ \vf|_{\partial\Omega} = 0$, and $(iii)~ \vf$ has constant sign on $\Omega$.

Now let $D$ be a nodal domain of $\vf$ that is a disk. Then $\lambda_0(D) =\lambda$.
On the other hand, the universal covering $\pi\colon\tilde S\to S$ is trivial over $D$ and so
we can lift $D$ to a disk $\tilde D$ in $\tilde S=\mathbb{H}^2$.
In particular, $\tilde D$ is isometric to $D$ and hence $\lambda_0(\tilde D)= \lambda_0(D) =\lambda \le 1/4$.
This is a contradiction to what we found in the first paragraph of this subsection, namely that $\lambda_0(\tilde D)> 1/4$.

To finish the proof of the first part of the lemma one observes that, if a simple loop in $Z_\vf$ would be homotopically trivial in $S$, then it would bound a disk in $S$, by the Schoenflies theorem,
and then there would be a nodal domain of $\vf$ that would be a disk, again by  the Schoenflies theorem.
The remaining assertion, i.e., that no nodal domain is an annulus or a M\"obius band, can be proved by similar arguments.
One extra ingredient one needs is that any annulus or M\"obius band in $S$ can be lifted to a cyclic subcover $\hat S$ of $\tilde S$ and that, by a result of Brooks \cite{Br2}, the bottom of the spectrum satisfies $\lambda_0(\hat S)=\lambda_0(\tilde S)=1/4$.
\end{proof}

\begin{rem} 
In the appendix of \cite{BMM3}, we give a short proof of Brooks' result in the case of normal cyclic coverings
as needed in the above application.
\end{rem}

\begin{proof}[Sketch of proof of \cref{otal_mult}]
The basic strategy is very similar to \cite{Se}.
For simplicity, we assume that $S$ is closed and let $\lambda\in(0,1/4]$ be an eigenvalue of $S$.
Let $E_\lambda$ be the eigenspace of $\lambda$
and denote the multiplicity $\dim E_\lambda$ of $\lambda$ by $d+1$.
Now the idea is again to use \cref{borsuk-ulam} and decompose, in a first step, the unit sphere $S^d$ in $E_\lambda$ (with respect to some norm) into $-\chi(S) -1$ many strata, using the topology of $S\setminus Z_\vf$. 
Otal chose the strata as
\begin{equation*}
\mathcal{S}_i = \{\varphi \in S_\lambda \mid \chi(S \setminus Z_\varphi)= -i\}.
\end{equation*}
Using \cref{incom_nodal} and the Euler-Poincar\'{e} formula, one can easily deduce that, for any $\lambda$-eigenfunction $\vf$, one has $ \chi(S) \le \chi(S \setminus Z_{\vf}) \le -2$.
In particular, $\mathcal{S}_i=\emptyset$ for $i \neq 2, \cdots, -\chi(S).$
Hence the above stratification consists of at most $-\chi(S) -1$ non-empty strata. 
From the definition it is clear that $\mathcal{S}_i$ is invariant under the antipodal map of $S_\lambda$.
Hence to conclude the theorem, one needs to prove that the restriction of the covering $\pi\colon S^d\to P^d$ to each stratum $\mathcal{S}_i$ is trivial,
where $P^d$ denotes the projective space of $E_\lambda$.

The argument for this part relies on the following fact:
If $U, V$ are two disjoint subsurfaces of $S$ with piecewise smooth boundary and at least one of $U$ or $V$ has negative Euler characteristic, then there is no isotopy of $S$ that interchanges $U$ and $V$.

For $\varphi \in S^d$, in the same line as \cite{Se}, consider the decomposition $S \setminus Z_\varphi$ according to the sign of $\varphi,$ 
i.e., $S\setminus Z_\varphi = C^+_\varphi \cup C^-_\varphi$, where $C^+_\varphi=\{\varphi>0\}$  and $C^-_\varphi=\{\vf<0\}$. 
Then $C^\pm_\varphi$ is a subsurface of $S$ with piecewise smooth boundary, 
where the possible singularities of the boundary of $C^\pm_\varphi$ are described in \cref{Cheng}.

Observe that, for any $\psi \in S^d$ sufficiently close to $\vf$, we have $\chi(C^\pm_\varphi) \ge \chi(C^\pm_\psi)$.
If we further assume that $\vf, \psi \in \mathcal{S}_i$, then the last two inequalities are actually equalities.
It then follows that there is an isotopy of $S$ that sends $\chi(C^\pm_\varphi)$ to $\chi(C^\pm_\psi)$. 
Hence, by our earlier assertion on the existence of such isotopies and \cref{incom_nodal}, the connected component of $\mathcal{S}_i$ that contains $\vf$ can not contain $-\vf$. 
This implies the triviality of the restriction of the covering $\pi\colon S^d \to P^d$ to the stratum $\mathcal{S}_i$.
\end{proof}

\subsection{Otal-Rosas proof of Buser-Schmutz conjecture}\label{subsecor}

In \cite{S2}, Schmutz showed that any hyperbolic metric on the surface $S_2$ has at most $2$ eigenvalues $<1/4$, and he, and Buser in \cite{Bu1}, conjectured that any hyperbolic metric on the closed surface $S=S_g$ has at most $2g-2$ eigenvalues below $1/4$. 
Observe that the above result of Otal already implies this conjecture if all eigenvalues of $S$ in $(0,1/4]$ coincide.
Of course, in general, this will not be the case, and so one needs to do more to prove the conjecture.
The proof of an extended version, \cref{ior}, was finally achieved by Otal and Rosas in \cite{OR}. 

\begin{proof}[Sketch of proof of \cref{ior}]
Although the line of approach is very similar to those explained in  \S \ref{sevennec} and \S \ref{otal:multi}, there are several new difficulties that appear.
Consider now the vector space $E$ spanned by the finitely many eigenspaces $E_\lambda$ of $S$ with $\lambda\le\lambda_0(\tilde S)$.

To extend the ideas in \S \ref{otal:multi}, one needs an extension of Lemma \ref{incom_nodal}.
Since the functions that we are considering now are linear combinations of eigenfunctions, \cref{Cheng} is no longer available.
However, since the underlying Riemannian metric of $S$ is real analytic, its eigenfunctions are real analytic functions and, therefore, also any (finite) linear combination of them. 
Hence (by \cite{L}, as explained in Proposition 3 of \cite{OR}), the nodal set of any such linear combination has the structure of a locally finite graph.

A next and more serious difficulty in extending the ideas from \S \ref{otal:multi} is that Lemma \ref{incom_nodal}
may no longer be true for the nodal sets of arbitrary linear combinations of the eigenfunctions in $E$. 
For example, the nodal set $Z_\varphi$ of $\varphi$ may have components that are not incompressible.
(Note also that $E$ contains the constant functions so that the nodal set of $\vf\in E$ may be empty.)
To take care of this, just delete all those components of $Z_\varphi$ that are contained in a topological disk
to obtain the modified graph, $G_\vf\subseteq Z_\vf$.
Now $G_\varphi$ may still not be incompressible in $S$;
however, the components of $S\setminus G_\vf$ are.

\begin{lem}\label{nodom}
For any $\vf\in E$, at least one component of $S\setminus G_\vf$ has negative Euler characteristic.
\end{lem}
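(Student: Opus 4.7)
The plan is to argue by contradiction, exploiting the Rayleigh-quotient bound $\int_S|\nabla\vf|^2\le\lambda_0(\tilde S)\int_S\vf^2$ forced by $\vf\in E$. Suppose every component $\Omega_1,\dots,\Omega_m$ of $S\setminus G_\vf$ satisfies $\chi(\Omega_i)\ge0$. The text preceding the lemma asserts that each $\Omega_i$ is incompressible in $S$. Any closed subsurface of $S$ must coincide with $S$, which is excluded by $\chi(S)<0$, so the classification of surfaces forces each $\Omega_i$ to be a disk, an open annulus, or a M\"obius band. In particular, $\pi_1(\Omega_i)$ is trivial or infinite cyclic, hence amenable.

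I would then place each $\Omega_i$ in a cover of $S$ with the same bottom of the spectrum as $\tilde S$. Let $H_i=\pi_1(\Omega_i)\subseteq\pi_1(S)$ and let $S_i=\tilde S/H_i$ be the associated intermediate cover. The covering $\tilde S\to S_i$ is a normal Riemannian covering with amenable deck group $H_i$, so Brooks' theorem gives
\begin{equation*}
\lambda_0(S_i)=\lambda_0(\tilde S).
\end{equation*}
Incompressibility ensures that $\Omega_i$ lifts isometrically to a relatively compact open subsurface $\tilde\Omega_i\subseteq S_i$. We may assume $G_\vf\ne\emptyset$ (otherwise $S\setminus G_\vf=S$ has $\chi(S)<0$ and the lemma is immediate), in which case $S_i$ is a non-compact cover of $S$ and $\tilde\Omega_i$ is a proper subset of it. Strict domain monotonicity then gives
\begin{equation*}
\lambda_0(\Omega_i)=\lambda_0(\tilde\Omega_i)>\lambda_0(S_i)=\lambda_0(\tilde S);
\end{equation*}
indeed, were equality to hold, the extension by zero of the principal Dirichlet eigenfunction on $\tilde\Omega_i$ would be an $L^2$-eigenfunction of $\Delta$ on $S_i$ that vanishes on a nonempty open set, contradicting unique continuation.

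The contradiction now follows from the variational principle. Since $\partial\Omega_i\subseteq G_\vf\subseteq Z_\vf$, the restriction $\vf|_{\Omega_i}$ vanishes on $\partial\Omega_i$, and so
\begin{equation*}
\int_{\Omega_i}|\nabla\vf|^2\ge\lambda_0(\Omega_i)\int_{\Omega_i}\vf^2>\lambda_0(\tilde S)\int_{\Omega_i}\vf^2
\end{equation*}
whenever $\vf\not\equiv0$ on $\Omega_i$. Summing over $i$, using that $G_\vf$ has measure zero and that $\vf\not\equiv0$ somewhere on $S$, gives $\int_S|\nabla\vf|^2>\lambda_0(\tilde S)\int_S\vf^2$. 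On the other hand, decomposing $\vf=\sum_{\lambda\le\lambda_0(\tilde S)}c_\lambda\psi_\lambda$ orthogonally in eigenfunctions spanning $E$ produces $\int_S|\nabla\vf|^2\le\lambda_0(\tilde S)\int_S\vf^2$, the desired contradiction.

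The main obstacle will be the strict monotonicity $\lambda_0(\tilde\Omega_i)>\lambda_0(S_i)$: a direct comparison with $\tilde S$ is too weak, and one really needs Brooks' equality $\lambda_0(S_i)=\lambda_0(\tilde S)$ combined with the fact that this bottom is not $L^2$-attained on the non-compact $S_i$. Once that strict inequality is secured, the rest is routine Rayleigh-quotient bookkeeping.
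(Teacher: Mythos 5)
Your proof is correct and follows essentially the same route as the paper's: argue by contradiction, note that each component of $S\setminus G_\vf$ with $\chi\ge 0$ is a disk, annulus, or M\"obius band whose Rayleigh quotient is $>\lambda_0(\tilde S)$ (via lifting to the intermediate cover $S_i=\tilde S/\pi_1(\Omega_i)$, Brooks' theorem, and strict domain monotonicity), and then sum over components to contradict $R(\vf)\le\lambda_0(\tilde S)$. The only differences are cosmetic: you spell out the unique-continuation justification of the strict inequality $\lambda_0(\tilde\Omega_i)>\lambda_0(S_i)$, which the paper leaves implicit by referring back to the arguments in \S\ref{otal:multi} and the proof of \cref{incom_nodal}.
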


\begin{proof}
Let $\vf\in E$.
Then the Rayleigh quotient $R(\varphi)$ of $\varphi$ is at most $\lambda_0(\tilde S)$, by the definition of $E$.
On the other hand, if any component of $S\setminus G_\vf$ would be a disk or an annulus,
then the Rayleigh quotient $R(\varphi|_C)$ of $\varphi$ restricted to any such component $C$ would be strictly bigger than $\lambda_0(\tilde S)$, by the argument in the first paragraph of \S \ref{otal:multi} for disks and the argument at the end of the proof of \cref{incom_nodal} for annuli.
But then the Rayleigh quotient of $\varphi$ on all of $S$ would be strictly bigger than $\lambda_0(\tilde S)$,
a contradiction. 
\end{proof}

We let $Y_\vf$ be the union of all components of $S\setminus G_\vf$ with negative Euler characteristic.
Then $Y_\vf$ is not empty, thanks to \cref{nodom} above, and $\chi(Y_\vf)<0$.
We also have $\chi(S)\le\chi(Y_\vf)$ by the Mayer-Vietoris sequence and the incompressibility of the components of $S\setminus G_\vf$.
This argument requires some thought.

By definition, each component $C$ of $S\setminus G_\vf$ is a union of a nodal domain $\Omega$ of $\vf$ with a finite number of disks in $S$ enclosed by $\Omega$.
We say that $C$ is positive or negative if $\vf$ is positive or negative on $\Omega$
and let $Y^+_\vf$ and $Y^-_\vf$ be the union of the positive and negative components of $Y_\vf$, respectively.
Then $Y_\vf$ is the disjoint union of $Y^+_\vf$ and $Y^-_\vf$.

One final modification is necessary for these $Y^\pm_\varphi$.
Namely, if a component of $S\setminus Y^+_\varphi$ or $S \setminus Y^-_\varphi$ is an annulus,
then we attach that annulus to its neighbour components in $Y^+_\varphi$ or $Y^-_\varphi$, respectively,
to obtain new subsurfaces $X^+_\varphi\supseteq Y^+_\varphi$ and $X^-_\varphi\supseteq Y^-_\varphi$. 
Note that $\chi(X^\pm_\varphi)=\chi(Y^\pm_\varphi)$ so that, in particular,
\begin{equation*}
  \chi(S) \le \chi(X^+_\varphi) + \chi(X^-_\varphi) <0,
\end{equation*}
by what we said above.

Now we are ready to follow the approaches in \S \ref{sevennec} and \S \ref{otal:multi}.
As before, we consider the unit sphere $S^d$ in $E$ and the projective space $P^d$ of $E$,
where $\dim E = d+1$.
The strata of $S^d$ as in \cref{borsuk-ulam} are now 
\begin{equation*}
\mathcal{S}_i = \{ \vf \in S^d: \chi(X^+_\varphi) + \chi(X^-_\varphi) = -i \}.
\end{equation*}
In order to show the triviality of the restriction of the covering $\pi\colon S^d\to P^d$ to $\mathcal{S}_i \to \mathcal{P}_i=\pi(\mathcal{S}_i)$,
one argues that the isotopy type of the triples $(S, X^+_\varphi, X^-_\varphi)$ does not change under a small perturbation of $\varphi$ as long as the perturbation lies in the same stratum. 
The proof of this last fact follows a similar line as the one in the last part of the (sketch of the) proof of \cref{otal_mult}.
\end{proof}

\section{Small eigenvalues and analytic systole}
\label{secas}
The proof of \cref{incom_nodal} suggests the following definition, that first appeared (without the name) in \cite[cf.\,Equation 1.6]{BMM1}.

\begin{dfn}
The \emph{analytic systole} $\Lambda(S)$ of a Riemannian surface $S$ is defined by
\begin{equation}
\Lambda(S) = \inf_\Omega \lambda_0(\Omega),
\end{equation}
where $\Omega$ runs over all compact disks, annuli, and M\"obius bands in $S$ with smooth boundary.
\end{dfn}

By domain monotonicity, one can also define the analytic systole using only annuli and M\"obius bands.
However, from the proof of \cref{ana_sys_thm} it will become clear why it is convenient to include disks in the definition.

\subsection{The number of small eigenvalues} \label{secas_small}
To put the next result into perspective, we note that
\begin{equation}\label{broo}
  \Lambda(S)\ge\lambda_0(\tilde S),
\end{equation}
by arguments as in the proof of \cref{incom_nodal}.

\begin{thm}\label{small}
A complete and connected Riemannian surface $S$ of finite type with $\chi(S)<0$ has at most $-\chi(S)$ eigenvalues in $[0,\Lambda(S)]$, counted with multiplicity.
\end{thm}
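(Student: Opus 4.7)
The plan is to transplant the Otal--Rosas argument outlined in \S\ref{subsecor} to the present smooth setting, with $\lambda_0(\tilde S)$ replaced throughout by $\Lambda(S)$.

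First, I would let $E\subseteq L^2(S)$ be the span of all eigenspaces of the Dirichlet (or Neumann, if $\partial S=\emptyset$) Laplacian of $S$ whose eigenvalues lie in $[0,\Lambda(S)]$, and aim to show $\dim E\le-\chi(S)$. A preliminary point is that $E$ is finite dimensional: eigenvalues strictly below $\lambda_{\mathrm{ess}}(S)$ are isolated of finite multiplicity, and the borderline case $\Lambda(S)=\lambda_{\mathrm{ess}}(S)$ can be handled by a short compactness argument using the very definition of $\Lambda(S)$.

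Next, to each $\varphi\in E\setminus\{0\}$ I would associate an ordered pair of disjoint subsurfaces $X^+_\varphi,X^-_\varphi\subseteq S$ with smooth boundary, contained respectively in $\{\varphi>0\}$ and $\{\varphi<0\}$, such that no connected component of $X^\pm_\varphi$ is a disk, annulus, or M\"obius band, and such that $\chi(X^+_\varphi)+\chi(X^-_\varphi)\ge\chi(S)$. The construction is the direct analogue of the one sketched for \cref{ior}: starting from the open sets $\{\varphi>\eta\}$ and $\{\varphi<-\eta\}$ for a generic small $\eta>0$ (so that Sard's theorem yields smooth boundary), one discards the components that are topological disks and attaches any remaining annular complementary components to the pieces they separate. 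The crucial role of $\Lambda(S)$ here is the same one played by $\lambda_0(\tilde S)$ in \cref{nodom}: if some retained component $C$ were a disk, annulus, or M\"obius band, then $\lambda_0(C)\ge\Lambda(S)$ by definition of the analytic systole, so restricting $\varphi$ to $C$ and summing Rayleigh quotients over components forces $R(\varphi)\ge\Lambda(S)$ with strict inequality somewhere (after a small shrinking of $C$ to exploit strict domain monotonicity), contradicting $\varphi\in E$.

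With this assignment at hand, I would stratify the unit sphere $S^d\subset E$ (where $d+1=\dim E$) by the subsets
\begin{equation*}
\mathcal{S}_i=\{\varphi\in S^d:\chi(X^+_\varphi)+\chi(X^-_\varphi)=-i\},\qquad i=1,\dots,-\chi(S),
\end{equation*}
each of which is antipodally invariant because $(X^+_{-\varphi},X^-_{-\varphi})=(X^-_\varphi,X^+_\varphi)$. Small perturbations of $\varphi$ within a fixed stratum preserve the isotopy class of the triple $(S,X^+_\varphi,X^-_\varphi)$, and because every non-empty component of $X^\pm_\varphi$ has negative Euler characteristic, no self-isotopy of $S$ can interchange $X^+_\varphi$ with $X^-_\varphi$. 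It follows that the tautological cover $S^d\to P^d$ is trivial over every $\mathcal{P}_i=\pi(\mathcal{S}_i)$, and \cref{borsuk-ulam} with $\ell_i=1$ for each of the $-\chi(S)$ strata then yields $\dim E\le-\chi(S)$.

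The main obstacle is the construction in the second step. Without real analyticity of the metric, a general $\varphi\in E$---a linear combination of eigenfunctions of a merely smooth metric---need not have a nodal set with the locally finite graph structure of \cref{Cheng}, so producing $X^\pm_\varphi$ with smooth boundary and verifying that the assignment is isotopy-stable under perturbations of $\varphi$ within a single stratum is delicate and constitutes the technical heart of \cite{BMM1,BMM2}. A secondary technical issue arises in the non-compact case: the Rayleigh-quotient comparison against $\Lambda(S)$ must be reconciled with the essential spectrum via a cut-off argument along the ends of $S$, analogous to the one used in the proof of \cref{bus}.
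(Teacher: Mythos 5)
Your overall plan is the paper's: replace exact nodal sets (whose structure requires analyticity via \cref{Cheng}) by approximate nodal sets $\{\varphi^2\le\ve\}$ for regular values $\ve$, prune disk components, pass to the pieces of negative Euler characteristic, attach complementary annuli and M\"obius bands, and then run the S\'evennec--Otal--Rosas stratification and Borsuk--Ulam argument on the unit sphere of $E$. That matches \cite[Section 2]{BMM1} and its extension in \cite{BMM2} quite closely.

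There is, however, a genuine gap at exactly the point you flag as ``the crucial role of $\Lambda(S)$.'' Your argument that no retained component $C$ is a disk, annulus, or M\"obius band is phrased by ``restricting $\varphi$ to $C$'' and comparing with $\lambda_0(C)\ge\Lambda(S)$. But $\varphi$ does not vanish on $\partial C$ when $C$ is a component of $\{\varphi>\eta\}$, so it is not a Dirichlet test function on $C$; the correct comparison uses the truncation $(\,|\varphi|-\eta\,)_+$, and this only gives $R(\varphi)\ge\Lambda(S)-o(1)$ as $\eta\to0$. Consequently the argument yields a contradiction when $R(\varphi)<\Lambda(S)$, but it breaks down in the borderline case $R(\varphi)=\Lambda(S)$, and your proposed fix (``small shrinking of $C$ to exploit strict domain monotonicity'') does not repair it: shrinking $C$ raises $\lambda_0(C)$, but the quantity you are comparing against $\lambda_0(C)$ is the Rayleigh quotient of the truncated function, whose excess over $R(\varphi)$ vanishes as $\eta\to0$, so no strict inequality survives in the limit. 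The paper resolves this differently: since $E$ is spanned by eigenspaces with eigenvalues $\le\Lambda(S)$, the equality $R(\varphi)=\Lambda(S)$ forces $\varphi$ to be an actual eigenfunction with eigenvalue $\Lambda(S)$, and then \cref{Cheng} applies to this single eigenfunction (no analyticity of the metric needed), giving exact control of the nodal set and hence of $Y_\varphi(\ve)$ for small $\ve$. This case distinction is essential and is missing from your argument.

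A secondary point: your treatment of the non-compact case identifies the wrong technical obstacle. Reconciling the Rayleigh-quotient comparison with the essential spectrum via cut-offs is needed to show $\dim E<\infty$, but the real difficulty in \cite{BMM2} is that a value $\ve$ that is regular for $\varphi^2$ need not be regular for $\psi^2$ when $\psi$ is $C^1$-close to $\varphi$, which destroys the stability of the stratification under perturbation. This is handled there by truncating $Y_\varphi(\ve)$ to a large compact $K$, replacing ``negative Euler characteristic'' by the ``$F_2$-component'' criterion, and proving stabilization of the triples $(S,S_\varphi^+(\ve,K),S_\varphi^-(\ve,K))$ jointly in $\ve$ small and $K$ large.
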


For $S$ with non-empty boundary, we assume the boundary to be smooth and the result refers to Dirichlet eigenvalues. 

This result was first proved for closed surfaces in \cite[Theorem 1.7]{BMM1} and then generalized to surfaces of finite type in \cite{BMM2}. 
The proofs of these results follow the approach from \cite{OR}, and again there are new problems one has to face. 
We start by describing the proof in the compact case and explain the additional arguments needed to handle the non-compact case briefly afterwards.

\subsubsection{Closed surfaces}
To circumvent the possibly bad regularity properties of nodal sets of non-vanishing linear combinations $\vf$ of eigenfunctions, we consider \emph{approximate nodal sets} $Z_\varphi(\ve)$ instead, i.e.
\begin{equation}
Z_\varphi(\ve)=\{\varphi^2 \leq \ve\},
\end{equation}
and the connected components of their complements, the \emph{approximate nodal domains}.
By Sard's theorem, almost every $\ve>0$ is a regular value of $\vf^2$, and we will restrict to such $\ve$ from here on.
For each such $\ve$, $Z_\varphi(\ve)$ is a subsurface of $S$.

In general, there is no need at all that the inclusion $S \setminus Z_\varphi(\ve) \to S$ is incompressible.
In order to overcome this problem, we modify $Z_\varphi(\ve)$ as follows: 
We remove any component of $Z_\varphi(\ve)$ that is contained in a closed disk $D \subset S$.
The resulting subsurface $Z'_\varphi(\ve) \subset Z_\varphi(\ve)$ is called
\emph{derived approximate nodal set}.
By construction, the complement $Y_\varphi(\ve)=S\setminus Z'_\varphi(\ve)$ is incompressible in $S$.
Moreover, any component of $Y_\vf(\ve)$ is the union of an approximate nodal domain of $\vf$ with a finite number of disks enclosed by it.
In particular, we may again assign signs to the components of $Y_\vf(\ve)$ to get $Y_\vf(\ve)=Y_\vf^+(\ve)\cup Y_\vf^-(\ve)$ as a disjoint union.
(Cf.\,Lemma 2.5 in \cite{BMM1}).
Note that it may happen that one of $Y_\vf^+(\ve)$ or $Y_\vf^-(\ve)$ is empty;
for example, if $\vf$ is a positive constant, then $Y_\vf^-(\ve)=\emptyset$.

Similar to the argument in \cref{subsecor}, we restrict our attention to those components of $Y_\varphi(\ve)$ and $Y_\varphi^\pm(\ve)$ having negative Euler characteristic and write $X_\varphi(\ve)$ and $X_\varphi^\pm (\ve)$ for the union of these components, respectively
(now following the notation in \cite{BMM2}).

In a next step, we show that $X_\varphi(\ve)$ is not empty (cf.\,Lemma 2.6 in \cite{BMM1}).
Since we are working with approximate nodal sets, the argument from the proof of \cref{nodom} only applies in the case where the Rayleight quotient $R(\varphi)<\Lambda(S)$ and shows that $X_\varphi(\ve)$ is not empty, for all sufficiently small $\ve>0$.
If instead $R(\varphi)=\Lambda(S)$, we need to analyze the situation much more carefully.
It turns out that, in this case, $\varphi$ is an eigenfunction so that we may use \cref{Cheng}, which allows us to understand the topology of $Y_\varphi(\ve)$ much better.
In fact, it is just the complement of a tubular neighbourhood around the nodal set of $\vf$.
Therefore, \cref{incom_nodal} implies that any component of $Y_\varphi(\ve)$ has negative Euler characteristic, for all sufficiently small $\ve>0$.

The last modification of the sets $X_\varphi(\ve)$ is exactly as in \cite{OR}.
That is, if components of $S\setminus X_\varphi^\pm(\ve)$ are annuli or M\"obius bands,
then attach these annuli and M\"obius bands to $X_\varphi^\pm(\ve)$ to obtain subsurfaces $S_\vf^\pm(\ve)$.
(Cf.\,Lemma 2.8 in \cite{BMM1}).
The final key observation is that these modifications in combination with incompressibility imply that the isotopy type of the triples $(S,S_\varphi^+(\ve),S_\varphi^-(\ve))$ stabilizes for $\ve$ small. (Cf.\ Lemma 2.10 in \cite{BMM1}).

From this point on, we can invoke the arguments from \cite{OR} again, although \cite{BMM1} is slightly more elementary in some parts of the proof.
This is due to the fact that we can use the implicit function theorem as a tool (cf.\,Lemma 3.2 in \cite{BMM1}), since $\ve$ is always chosen to be a regular value of $\varphi$. 

\subsubsection{Non empty boundary}
The case of non-empty smooth boundary follows along the same lines using only one extra ingredient.
This is an extension of \cref{Cheng} to the case of Dirichlet boundary values \cite[Theorem 1.7]{BMM2}.
The proof of the latter follows by standard reflection techniques.

\subsubsection{Non-compact surfaces}
The proof in the non-compact case relies on another modification procedure, which is related to the asymptotic behavior of approximate nodal sets.
Besides using approximate nodal sets instead of nodal sets, we also truncate the sets $Y_\varphi(\ve)$.
The reason why we need to introduce this truncation procedure is that, for two functions  $\varphi,\psi$, the value $\ve$ can be regular for $\varphi$, but not for $\psi$, even if $\| \varphi - \psi\|_{C^1}$ is very small.
A first important ingredient is that, during truncation, there is no loss of the relevant topology.

Replacing negative Euler characteristic by a different criterion,
we say that a component $C$ of $Y_\varphi(\ve)$ is an \emph{$F_2$-component}
if $\pi_1(C)$ contains $F_2$, the free group on two generators.

Let $K \subset S$ be a (large) compact subsurface with smooth boundary, such that $S \setminus K$ consists of a finite union of infinite cylinders.

As a consequence of incompressibility and the topology of $S \setminus K$, we have that any $F_2$-component of $Y_\varphi(\ve)$ has to intersect $K$.
Moreover, there can be only finitely many $F_2$-components.
(Cf.\ Lemma 4.14 in \cite{BMM2}).

As above, we restrict our attention to the $F_2$-components and denote by
$X_\varphi(\ve)$ the union of all the $F_2$-components of $Y_\varphi(\ve)$.
Using the Schoenflies theorem and incompressibility, it is possible to show that $K$ can be perturbed (possibly quite significantly, depending on $\varphi$),
such that $X_\varphi^{\pm}(\ve) \cap K$ is homotopy equivalent to $X_\varphi^{\pm}(\ve)$ (cf.\,the discussion on p.\,1761 in \cite{BMM2}).
We then call a pair $(\ve,K)$ \emph{$\varphi$-regular} if $\ve$ is a regular value of $\varphi^2$ and $X_\varphi^{\pm}(\ve,K)=X_\varphi^{\pm}(\ve)\cap K$ is homotopy equivalent to $X_\varphi^{\pm}(\ve)$.

After modifying the $X_\varphi^{\pm}(\ve,K)$ to subsurfaces $S_\varphi^{\pm}(\ve,K)$ as above,
the final preparatory step is to show that the topological type of the triples $(S,S_\varphi^+(\ve,K),S_\varphi^-(\ve,K))$, where $(\ve,K)$ is $\varphi$-regular, stabilizes for sufficiently small $\ve>0$ and sufficiently large $K$.
(Cf.\ Lemma 4.21 in \cite{BMM2}).
We can then use the topological types of these triples to invoke the machinery we have used above, 
but have to face several additional technical difficulties.

\begin{rem}
It might be a bit surprising at first that this only very rough description of the asymptotic behavior of approximate nodal domains is sufficient for our purposes.
This is remarkable, in particular, when compared to the proof of \cite[Th{\'e}or{\`e}me 2]{OR}, which uses the rather explicit description of eigenfunctions in cusps coming from separation of variables.
However, the main point for our topological arguments is that all topology of derived approximate nodal sets can be detected within large compact sets.
\end{rem}

\subsection{Quantitative bounds for the analytic systole} \label{secas_quant}
\cref{small} raises the natural problem of finding estimates for $\Lambda(S)$ in terms of other geometric quantities.

Our first result in this direction generalizes the main result of the third author in \cite{Mo},
which asserts that a hyperbolic metric on the closed surface $S_\gamma$ of genus $\gamma \ge2$
has at most $2\gamma-2$ eigenvalues $\le1/4+\delta$, where
\begin{equation*}
  \delta = \min\{\pi/|S|,\sys(S)^2/|S|^2\}.
\end{equation*}
Here $|S|$ denotes the area of $S$ and $\sys(S)$, the \emph{systole} of $S$,
is defined to be the minimal possible length of an essential closed curve in $S$.

\begin{thm}[Theorem 1.6 in \cite{BMM3}]\label{main2}
For a closed Riemannian surface $S$ with curvature $K\le\kappa\le0$, we have
\begin{equation*}
  \Lambda(S) \ge -\frac{\kappa}4 + \frac{\sys(S)^2}{|S|^2}.
\end{equation*}
\end{thm}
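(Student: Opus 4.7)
The plan is to prove the pointwise bound $\lambda_0(\Omega)\ge -\kappa/4 + \sys(S)^2/|S|^2$ for every compact disk, annulus, or M\"obius band $\Omega\subset S$ with smooth boundary; the bound on $\Lambda(S)$ then follows by taking the infimum. The strategy combines a curvature-based McKean-type bound, producing the $-\kappa/4$ term, with an additive Poincar\'e-type improvement coming from the global invariants $\sys(S)$ and $|S|$.

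First, I would lift $\Omega$ to a covering $\hat S\to S$ that trivializes the $\pi_1$-image of $\Omega$: the universal cover $\tilde S$ if $\Omega$ is a disk or an annulus with null-homotopic core, the infinite cyclic cover if $\Omega$ is an annulus with essential core, and the infinite dihedral cover if $\Omega$ is a M\"obius band. The lifted domain $\hat\Omega$ is isometric to $\Omega$, so $\lambda_0(\Omega)=\lambda_0(\hat\Omega)$. Since $\hat S$ has $K\le\kappa\le0$, McKean's theorem applied on $\tilde S$ together with the amenability of the deck group of $\tilde S\to\hat S$ (Brooks) gives $\lambda_0(\hat S)\ge-\kappa/4$, and hence a positive smooth function $u$ on $\hat S$ with $\Delta u=-(\kappa/4)u$.

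The key identity is the ground-state substitution: for $\vf\in C^\infty_{cc}(\hat\Omega)$, writing $\vf=uv$ and integrating by parts yields
\begin{equation*}
\int_{\hat\Omega}|\nabla\vf|^2 \,=\, -\frac{\kappa}{4}\int_{\hat\Omega}\vf^2 \,+\, \int_{\hat\Omega}u^2|\nabla v|^2.
\end{equation*}
Thus $R(\vf)$ equals $-\kappa/4$ plus the weighted Rayleigh quotient of $v$ with weight $u^2$, and the proof reduces to the weighted Poincar\'e-type estimate
\begin{equation*}
\int_{\hat\Omega} u^2|\nabla v|^2 \,\ge\, \frac{\sys(S)^2}{|S|^2} \int_{\hat\Omega} u^2 v^2.
\end{equation*}

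The main obstacle is establishing this weighted inequality uniformly across the three topological types of $\Omega$ with the correct constant. In the annulus and M\"obius cases the core of $\hat\Omega$ projects to an essential closed curve in $S$ of length $\ge\sys(S)$, while $|\hat\Omega|\le|S|$; a transverse chord across $\hat\Omega$ therefore has average length at most $|S|/\sys(S)$, and a fibrewise one-dimensional Poincar\'e inequality along such chords produces the required bound, with a constant that even exceeds the stated one. The disk case is the most delicate: here one argues that $\hat\Omega$ must be transversally narrow because no essential loop of length $<\sys(S)$ can exist in $S$, and one combines this with a Faber-Krahn or isoperimetric estimate adapted to curvature $\le\kappa$. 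Unifying the three cases and tracking the constant carefully is the crux of the argument.
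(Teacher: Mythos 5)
The paper's proof does not use a ground state substitution at all. It applies the Cheeger inequality $\lambda_0(F)\ge h(F)^2/4$ to $F$ a compact disk, annulus, or M\"obius band, and then estimates $h(F)$ from below by an isoperimetric inequality adapted to curvature $K\le\kappa\le0$ and to the constraint that essential boundary components have length at least $\sys(S)$ while areas are at most $|S|$. The $-\kappa/4$ term comes from the $-\kappa A^2$ term in the isoperimetric inequality $\ell^2\ge 4\pi A-\kappa A^2$ (and its annular variant), not from a positive $(-\kappa/4)$-harmonic function; the whole argument stays in $S$ and never passes to a cover.

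Your reduction, on the other hand, is a genuinely different route: lift $\Omega$ to a cover $\hat S$, take a positive function $u$ with $\Delta u=-(\kappa/4)u$ on $\hat S$ (which exists since $\lambda_0(\hat S)\ge-\kappa/4$ by McKean plus amenability of the relevant deck group), and use the identity
\begin{equation*}
  \int|\nabla(uv)|^2 = -\frac{\kappa}4\int(uv)^2 + \int u^2|\nabla v|^2,
\end{equation*}
which is correct and cleanly isolates the $-\kappa/4$ summand. But the step you must then supply is exactly where the proposal has a real gap. You need the weighted Poincar\'e inequality
\begin{equation*}
  \int_{\hat\Omega}u^2|\nabla v|^2 \ge \frac{\sys(S)^2}{|S|^2}\int_{\hat\Omega}u^2v^2
\end{equation*}
for all $v$ with $uv\in C^\infty_{cc}(\hat\Omega)$, and the sketch offered does not establish it. Two concrete problems: (i) the argument ``transverse chords have average length $\le|S|/\sys(S)$, then apply a fibrewise 1D Poincar\'e inequality'' requires a bound on the \emph{longest} chord in the chosen foliation, not on the average, since the fibrewise Poincar\'e constant is controlled by the worst fibre; an average bound only gives information \`a la coarea/Cheeger, which is precisely the route the paper takes instead. (ii) None of this accounts for the weight $u^2$. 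On the cover $\hat S$ the function $u$ is not constant and may vary by a large factor across $\hat\Omega$, so the 1D Poincar\'e constant along a chord depends on $u$ and need not be $(\pi/L)^2$. To handle the weight one would effectively need a weighted isoperimetric/Cheeger inequality for the measure $u^2\,dA$, which is a nontrivial additional claim not justified here. Your treatment of the disk case is even sketchier (``transversally narrow because no essential loop of length $<\sys(S)$ exists'' does not constrain a disk, whose loops are all contractible), whereas the paper handles disks with the simply connected isoperimetric inequality $\ell^2\ge 4\pi A-\kappa A^2$ together with $A\le|S|$. In short: the first half of your proposal is a correct and elegant reduction, but the crux you flag yourself --- proving the weighted Poincar\'e inequality with the stated constant --- is missing, and what is sketched would not close it.
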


The proof relies on the Cheeger inequality for subsurfaces $F$ of $S$ with non-empty boundary,
\begin{equation*}
\lambda_0(F) \geq h(F)^2/4.
\end{equation*}
Recall that the \emph{Cheeger constant} is defined by
\begin{equation*}
h(F)=\inf \frac{\ell(\partial \Omega)}{|\Omega|},
\end{equation*}
where the infimum is taken over all subdomains $\Omega \subset \mathring F$ with smooth boundary and $\ell$ indicates length.

If $F$ is a disk, an annulus, or a M\"obius band, the assumed curvature bounds allow to apply isoperimetric inequalites (cf.\ \cite[Corollary 2.2]{BMM3}) giving the corresponding lower bound for the Cheeger constant.

One may view \cref{main2} also as an upper bound on the systole
in terms of a curvature bound and $\Lambda(S).$
Together with our next result, this explains the name \emph{analytic systole}.

For a closed Riemannian surface $S$, we say that a closed geodesic $c$ of $S$
is a \emph{systolic geodesic} if it is essential with length $L(c)=\sys(S)$.
Clearly, systolic geodesics are simple.

\begin{thm}[Theorem 1.8 in \cite{BMM3}]\label{esansy}
If $S$ is a closed Riemannian surface with $\chi(S)<0$ and curvature $K\ge-1$, then
\begin{equation*}
  \Lambda(S) \le \frac{1}{4} + \frac{4\pi^2}{w^2},
\end{equation*}
where $w=w(\sys(S))=\arsinh(1/\sinh(\sys(S)))$.
\end{thm}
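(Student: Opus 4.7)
My plan is to exhibit a specific annulus or M\"obius band $\Omega \subset S$ around a systolic geodesic and bound $\lambda_0(\Omega)$ from above by a carefully chosen test function; since $\Lambda(S) \le \lambda_0(\Omega)$ by definition, this yields the desired bound.

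First, let $c \subset S$ be a closed geodesic with $L(c) = \sys(S) = \ell$, and take $\Omega$ to be the Fermi tubular neighborhood of $c$ of radius $w = \arsinh(1/\sinh \ell)$ in $S$; this is an annulus if $c$ is two-sided and a M\"obius band otherwise. The first task is to verify that $\Omega$ is smoothly embedded, i.e.\ that the normal exponential map from $c$ is injective up to radius $w$. Lifting to the universal cover $\tilde S$, one argues that if a $w$-ball around a point of $\tilde c$ met a $w$-ball around a point of $\gamma \tilde c$ for some $\gamma \in \pi_1(S) \setminus \langle c \rangle$, then concatenating short perpendicular arcs with a subarc of $\tilde c$ would produce a loop in $S$ shorter than $\sys(S)$, a contradiction. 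The precise value $\sinh w \sinh \ell = 1$ should drop out of this collar estimate, in parallel with \cref{coll}; the lower curvature bound $K \ge -1$ enters via comparison geometry to control distances in $\tilde S$.

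In Fermi coordinates $(r,s) \in (-w, w) \times \R/\ell\Z$ the metric has the form $dr^2 + g(r,s)^2\,ds^2$ with $g(0,s) = 1$ and $\partial_r g(0,s) = 0$. Rauch's second comparison theorem, applied with $K \ge -1$, yields the pointwise bound $g(r,s) \le \cosh r$. Taking the cue from the generalized ground state $y^{1/2}$ of the Laplacian on the hyperbolic plane, which in Fermi coordinates along a geodesic reads $e^{s/2}(\cosh r)^{-1/2}$ and has eigenvalue $1/4$, I would use the purely radial test function
\[
  \varphi(r) = (\cosh r)^{-1/2}\,\chi(r),
\]
where $\chi$ is a smooth Dirichlet cutoff supported on a subinterval of $(-w,w)$ of length $w/2$, placed in the outer part of the collar so that $\operatorname{sech}^2(r)$ is small on its support. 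Plugging $\varphi$ into the Rayleigh quotient $R(\varphi) = \int |\nabla \varphi|^2\,dV / \int \varphi^2\,dV$, using $g(r,s) \le \cosh r$ in the numerator, and performing one integration by parts to eliminate the cross term coming from the gauge factor $(\cosh r)^{-1/2}$, the problem reduces to a one-dimensional Schr\"odinger-type Rayleigh quotient
\[
  R(\varphi) \le \frac{\int \bigl[(\chi')^2 + V(r)\,\chi^2\bigr]\,dr}{\int \chi^2\,dr},
\]
with effective potential satisfying $V(r) \le \tfrac14 + \tfrac14 \operatorname{sech}^2(r)$. Choosing $\chi$ as the first Dirichlet eigenfunction on its interval of length $w/2$ yields the kinetic contribution $(2\pi/w)^2 = 4\pi^2/w^2$, while the potential contributes at most $1/4$ plus an exponentially small correction absorbed by the placement of the support. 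Combining everything gives $R(\varphi) \le 1/4 + 4\pi^2/w^2$, hence $\Lambda(S) \le \lambda_0(\Omega) \le R(\varphi) \le 1/4 + 4\pi^2/w^2$.

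The main hurdle is the first step: making the collar-embeddedness claim rigorous for Riemannian metrics with only a lower curvature bound, where the hyperbolic estimates must be replaced by suitable comparison arguments and one must rule out geodesic focusing inside the purported tube. Once this is in place, the Rayleigh quotient bound is a concrete computation, with the Rauch inequality $g \le \cosh r$ ensuring that the variable-curvature case is no worse than the hyperbolic one.
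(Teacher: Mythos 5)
Your route differs from the paper's in two ways: the paper takes for $\Omega$ a geodesic ball $B(x,r)\subset T$ (a disk) and invokes \emph{Cheng's eigenvalue comparison theorem}, whereas you take the whole collar annulus and try to beat it with the explicit test function $\varphi=(\cosh r)^{-1/2}\chi(r)$. The second route, as written, has a genuine gap in the reduction to a one--dimensional Rayleigh quotient. In Fermi coordinates the Rayleigh quotient of a function of $r$ alone is
\begin{equation*}
R(\varphi)=\frac{\int \varphi'(r)^2\,h(r)\,dr}{\int \varphi(r)^2\,h(r)\,dr},
\qquad h(r)=\int_0^{\ell} g(r,s)\,ds,
\end{equation*}
and after the substitution $\varphi=(\cosh r)^{-1/2}\chi$ and integration by parts one gets
\begin{equation*}
R(\varphi)=\frac{\int \mu\,\chi'^2\,dr+\int V\,\mu\,\chi^2\,dr}{\int \mu\,\chi^2\,dr},
\qquad \mu(r)=\frac{h(r)}{\cosh r},
\end{equation*}
with $V\le\tfrac14+\tfrac14\operatorname{sech}^2 r$ (this already needs the Riccati comparison $\partial_r g/g\le\tanh r$, not merely $g\le\cosh r$). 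The weight $\mu$ sits in both numerator and denominator, so ``using $g\le\cosh r$ in the numerator'' does not let you drop $\mu$ and pass to $\int\chi'^2/\int\chi^2$; you would need a lower bound on $\mu$, and $K\ge-1$ gives only $\mu\le1$. If the curvature is large and positive somewhere in the collar, $g$ (hence $\mu$) can become nearly zero on part of $\operatorname{supp}\chi$, and then $\int\mu\chi'^2/\int\mu\chi^2$ can be arbitrarily large compared with $\pi^2/L^2$ (for instance $\mu=e^{-Cr}$ with $\chi=\sin(\pi r/L)$ gives a quotient of order $C^2$). This is exactly what Cheng's theorem is built to handle: the transplanted test function there is the \emph{ground state} of the model domain, whose monotonicity in the radial variable combines with the Bishop--Gromov monotonicity of the density ratio to close the estimate without any lower bound on the density. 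Your cutoff $\chi$ is the eigenfunction of a flat interval, not the model ground state, so that mechanism is not available; you should either quote Cheng's comparison for a ball $B(x,r)\subset T$ as the paper does, or carry out the transplantation argument for the full collar with its actual model ground state.

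Two smaller points. First, with the cutoff supported on an interval of length exactly $w/2$, the kinetic term is already $4\pi^2/w^2$, leaving no room for the positive $\tfrac14\operatorname{sech}^2$ correction to the potential; a slightly longer subinterval (say length $0.6\,w$) is needed for the constants to close, and then one must check the resulting inequality uniformly in $w$. Second, the embeddedness of the Fermi collar and the absence of focal points before radius $w$ under $K\ge-1$ alone is indeed nontrivial, as you acknowledge; this is precisely what \cref{coll} (in its variable--curvature version proved in \cite{BMM3}) supplies, and it is needed for both approaches.
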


Under the assumed curvature bound, \cref{coll} gives a cylindrical neighbourhood $T$ of a systolic geodesic of width at least $w$.
Applying Cheng's eigenvalue comparison to geodesic balls $B(x,r) \subset T$ with $r<w$ gives the result.

The combination of \cref{main2} and \cref{esansy} in the case of hyperbolic metrics 
is probably a bit more enlightening than the general case.

\begin{cor}\label{syshype}
For closed hyperbolic surfaces, we have
\begin{equation*}
  \frac{1}4 + \frac{\sys(S)^2}{4\pi^2\chi(S)^2}
  \le \Lambda(S)
  \le \frac{1}4 + \frac{4\pi^2}{w^2}
\end{equation*}
with $w=w(\sys(S))$ as in \cref{esansy}.
\end{cor}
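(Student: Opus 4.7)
The plan is to derive this corollary as an immediate consequence of \cref{main2} and \cref{esansy}, the only extra input being the Gauss--Bonnet formula for the area of a closed hyperbolic surface.

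For the lower bound, I would apply \cref{main2} with $\kappa = -1$, which is admissible since a hyperbolic metric satisfies $K \equiv -1 \le -1 \le 0$. This yields
\begin{equation*}
  \Lambda(S) \ge \frac{1}{4} + \frac{\sys(S)^2}{|S|^2}.
\end{equation*}
By Gauss--Bonnet, $|S| = 2\pi |\chi(S)|$ for any closed hyperbolic surface, so $|S|^2 = 4\pi^2 \chi(S)^2$, and substituting gives exactly the left-hand inequality of the corollary.

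For the upper bound, \cref{esansy} applies directly since a hyperbolic metric trivially satisfies $K \ge -1$ and $\chi(S) < 0$ is part of the standing assumptions (a closed hyperbolic surface has $\chi(S) \le -2 < 0$). Its conclusion is verbatim the right-hand inequality, with $w = w(\sys(S)) = \arsinh(1/\sinh(\sys(S)))$ as defined in \cref{esansy}.

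I do not anticipate any real obstacle: both bounds are just specializations of the two preceding theorems, and the only minor bookkeeping is recognizing that for hyperbolic metrics the Gauss--Bonnet formula rewrites $|S|$ as $2\pi|\chi(S)|$, converting the $\sys(S)^2/|S|^2$ term from \cref{main2} into the stated form $\sys(S)^2/(4\pi^2\chi(S)^2)$.
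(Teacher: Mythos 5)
Your proposal is correct and is essentially the paper's own (implicit) proof: the corollary is stated right after the sentence ``The combination of \cref{main2} and \cref{esansy} in the case of hyperbolic metrics\ldots'' and the only arithmetic step is the Gauss--Bonnet identity $|S|=2\pi|\chi(S)|$, exactly as you use it.
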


\subsection{Qualitative bounds for the analytic systole} \label{secas_qual}
Our main qualitative result concerning the analytic systole of compact surfaces is as follows.

\begin{thm}[Theorem 1.1 in \cite{BMM3}] \label{ana_sys_thm}
If $S$ is a compact and connected Riemannian surface whose fundamental group is not cyclic, then $\Lambda(S)>\lambda_0(\tilde S)$.
\end{thm}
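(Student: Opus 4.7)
I would attempt a proof by contradiction. Suppose $\Lambda(S)=\lambda_0(\tilde S)$ and take a sequence of compact disks, annuli, or M\"obius bands $\Omega_n\subset S$ with smooth boundary such that $\lambda_0(\Omega_n)\to\lambda_0(\tilde S)$. After passing to a subsequence, all $\Omega_n$ have the same topological type, so $H_n:=\pi_1(\Omega_n)\le\pi_1(S)$ is either trivial (disks) or infinite cyclic (annuli, M\"obius bands), and in any case amenable. Let $\pi_n\colon\hat S_n\to S$ be the cover corresponding to $H_n$. Then $\Omega_n$ lifts isometrically to $\hat\Omega_n\subset\hat S_n$ with $\lambda_0(\hat\Omega_n)=\lambda_0(\Omega_n)$, and Brooks' theorem, applied to the amenable normal cover $\tilde S\to\hat S_n$, gives $\lambda_0(\hat S_n)=\lambda_0(\tilde S)$. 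Since $\pi_1(S)$ is noncyclic, $H_n$ has infinite index in $\pi_1(S)$, and hence each $\hat S_n$ is noncompact.

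Next I would extract a limiting ground state. Let $\phi_n$ be the first Dirichlet eigenfunction of $\hat\Omega_n$ with $\|\phi_n\|_{L^2}=1$, extended by zero to $\hat S_n$. The essential uniform bounds are $|\hat\Omega_n|\le|S|$, $\int|\nabla\phi_n|^2\to\lambda_0(\tilde S)$, and the bounded local geometry inherited from the compact base. After translating each $\hat\Omega_n$ by a deck transformation of $\tilde S\to S$ so that it meets a fixed compact set $K\subset\tilde S$, standard elliptic estimates yield a subsequence converging locally uniformly and weakly in $H^1$ to a nonnegative $\phi_\infty$ on a limit cover $\hat S_\infty\to S$ whose deck group from $\tilde S$ is still amenable. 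A local Harnack-type lower bound on $\phi_n|_{\hat\Omega_n\cap K}$, combined with the area bound, forces $\phi_\infty\not\equiv 0$; by Fatou $\phi_\infty\in L^2(\hat S_\infty)$, and by convergence of Rayleigh quotients $\Delta\phi_\infty=\lambda_0(\tilde S)\phi_\infty$. Because $\pi_1(S)$ is noncyclic, $\hat S_\infty$ remains a noncompact cover of $S$, and it carries a nontrivial $L^2$ ground state at its spectral bottom.

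The contradiction then comes from ruling out such ground states. By the strict maximum principle $\phi_\infty$ may be taken positive; pulling back to $\tilde S$ yields a bounded positive $\lambda_0(\tilde S)$-eigenfunction $\tilde\phi$, invariant under the cyclic (or trivial) subgroup $H_\infty\le\pi_1(S)$. Uniqueness up to scalar of positive $\lambda_0$-eigenfunctions on $\tilde S$---a Perron--Frobenius-type statement valid in the bounded-geometry setting---forces $\tilde\phi\circ g=c_g\tilde\phi$ for some $c_g>0$ and all $g\in\pi_1(S)$, and the boundedness of $\tilde\phi$ then forces $c_g\equiv1$. Hence $\tilde\phi$ is $\pi_1(S)$-invariant and descends to a positive $\lambda_0(\tilde S)$-eigenfunction on compact $S$, which is possible only if $\lambda_0(\tilde S)=0$ and $\tilde\phi$ is constant; but then $\phi_\infty$ is constant on the infinite-area cover $\hat S_\infty$, contradicting $\phi_\infty\in L^2$. (The case where $S$ has smooth boundary is handled analogously, passing to Dirichlet conditions throughout.)

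The main obstacle lies in two analytic points, both of which use the noncyclicity of $\pi_1(S)$ decisively. First, ensuring $\phi_\infty\not\equiv0$ requires preventing the $L^2$ mass of $\phi_n$ from escaping to infinity in $\hat S_n$; this demands coupling the area constraint $|\hat\Omega_n|\le|S|$ to a quantitative Harnack or Faber--Krahn lower bound on $\phi_n$ over the compact set $K$. Second, establishing the uniqueness of positive $\lambda_0$-eigenfunctions on $\tilde S$---equivalently, the absence of $L^2$ ground states on any nontrivial amenable cover of $S$---is the deepest input; the paper likely treats it via an explicit deck-group argument rather than invoking general Martin-boundary theory. Everything else is a compactness-and-bookkeeping exercise built on the lifting setup of Section \ref{secns} and the Brooks-type identity for amenable covers.
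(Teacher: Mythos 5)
Your overall strategy is in the right spirit, and your Harnack/Faber--Krahn inradius step is essentially the paper's Lemma~\ref{cheet}; for disks, where $\hat S_n=\tilde S$ and $\phi_\infty\in L^2(\tilde S)$, your Perron--Frobenius/descent contradiction is valid and gives an alternative to the paper's unique-continuation argument (which instead notes that the limit $\tilde\varphi$ must vanish on a ball around a second preimage of $x$). The gap is in the annulus case, where the paper's proof has a genuinely different, bootstrap structure that your proposal does not reproduce. \emph{First}, passing to a subsequence so that ``all $\Omega_n$ have the same topological type'' only fixes the homeomorphism type. Annuli in $S$ realize infinitely many distinct conjugacy classes of cyclic subgroups (one per free homotopy class of core), so the covers $\hat S_n$ may all be different and there is no well-defined ``limit cover $\hat S_\infty$''; moreover the covers $\hat S_n\to S$ are not normal, so deck transformations of $\tilde S\to S$ do not act on them and the renormalization ``translating each $\hat\Omega_n$ so that it meets a fixed compact set'' is not available. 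The paper resolves this by establishing $\Lambda_D(S)>\lambda_0(\tilde S)$ for disks \emph{first} and then using the quantitative Lemma~\ref{lambdad} to show that any annulus with $\lambda_0$ near $\lambda_0(\tilde S)$ has bounded core length, hence only finitely many isotopy types compete; only then is there a fixed cyclic cover to lift into.

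\emph{Second}, the final contradiction is not justified for annuli. Uniqueness up to scalar of positive $\lambda_0$-eigenfunctions on a complete manifold holds when $\lambda_0$ is attained by an $L^2$ ground state on that manifold (the $\lambda_0$-recurrent case); otherwise such eigenfunctions typically form a cone, as already happens for $\tilde S=\mathbb H^2$ at $\lambda_0=1/4$. Your compactness argument produces an $L^2$ ground state on the intermediate cover $\hat S_\infty$, not on $\tilde S$, and its pullback $\tilde\phi$ is $H_\infty$-periodic, hence not in $L^2(\tilde S)$. Without $L^2$-achievement on $\tilde S$ itself, you cannot invoke uniqueness on $\tilde S$, and the chain $\tilde\phi\circ g=c_g\tilde\phi\Rightarrow$ descent $\Rightarrow\lambda_0(\tilde S)=0$ collapses. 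The paper's annulus argument instead runs a two-case analysis in the now-fixed cyclic cover $\hat S$: either the lifted balls $B(\hat x_i,\rho/2)$ stay in a compact part of $\hat S$ (then the disk-type argument applies), or they escape, in which case a cutoff estimate produces $\Lambda_D(S)\le\lambda_0(A_i)+o(1)$, contradicting $\lambda_0(A_i)\to\lambda_0(\tilde S)<\Lambda_D(S)$. In short, both missing ingredients hinge on having the disk inequality as a quantitative input before touching annuli; treating all three topological types in a single compactness scheme loses exactly that leverage.
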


In \cite[Theorem 1.2]{BMM3}, we characterize the inequality $\Lambda(S)>\lambda_0(\tilde S)$
also for complete Riemannian surfaces $S$ of finite type.
The proof is quite involved.
To keep the arguments a bit easier, we will only discuss the inequality for compact surfaces
as stated in \cref{ana_sys_thm}.
This reduces the technical difficulties significantly but still contains most of the main ideas.

\subsubsection{The case $\chi(S)=0$.} (Cf.\,\cite[Section 4]{BMM3}.)
This case is much easier than the general case, since it follows immediately from \cref{main2}, when combined with a result of Brooks.
\begin{proof}
If $\chi(S)=0$, then $S$ is a torus or a Klein bottle.
Therefore, $\pi_1(S)$ is amenable and \cite{Br2} (or \cite{BMP}) implies that $\lambda_0(\tilde S)=0$.

Since $S$ is a torus or a Klein bottle, there is a flat metric $h$ conformal to the initial metric $g$.
Clearly, $g$ and $h$ are $\alpha$-quasiisometric for some $\alpha>1$, thus
\begin{equation*}
\Lambda(S,g)\geq \alpha^{-1} \Lambda(S,h).
\end{equation*}
Furthermore, we have
\begin{equation*}
\Lambda(S,h)\geq \frac{\sys(S,h)^2}{|S|_h^2}>0,
\end{equation*}
by \cref{main2}.
Hence $\Lambda(S,g)>0=\lambda_0(\tilde S)$ as asserted.
\end{proof}

\subsubsection{Inradius estimate}
For a domain $\Omega\subset S$ with piecewise smooth boundary, we call the unique positive, $L^2$-normalized Dirichlet eigenfunction of $\Omega$ the \emph{ground state}.
Recall also that the \emph{inradius} of $\Omega$ is defined to be
\begin{equation*}
\inrad(\Omega)=\sup \{r>0 \ | \ B(x,r) \subset \Omega \ \text{for some}\ x \in \Omega \}.
\end{equation*}
The first preliminary result we need in the remaining discussion is an inradius estimate for superlevel sets of ground states.

\begin{lem}[cf.\,Lemma 6.4 in \cite{BMM3}]\label{cheet}
There are constants $\rho, \ve_0>0$, such that for the ground state $\varphi$ of any compact disk, annulus, or M\"obius band $F$ in $S$ with smooth boundary and $\lambda_0(F)\leq \Lambda(S)+1,$ we have
\begin{equation*}
\inrad(\{\varphi^2 \geq \ve_0\}) \geq \rho.
\end{equation*}
\end{lem}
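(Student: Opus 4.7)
The plan is to locate a maximum point $x_0$ of $\vf$ on $F$, show that it lies at a definite distance from $\partial F$, and then use the Harnack inequality for positive eigenfunctions on a ball around $x_0$. Write $M:=\|\vf\|_{L^\infty(F)}$ and $\psi:=\vf/M$, so $\psi(x_0)=1$ is an interior maximum and $(\Delta+\lambda_0(F))\psi=0$ on $F$ with $\psi|_{\partial F}=0$.

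First I would establish the two-sided bound $m_0 \le M \le M_0$ with constants depending only on $S$ and $\Lambda(S)$. The lower bound follows from $\int_F\vf^2=1$ and $|F|\le|S|$, giving $M \ge |S|^{-1/2}$. The upper bound follows from Moser iteration applied to $\Delta\vf+\lambda_0(F)\vf=0$: since any $\vf\in H_0^1(F)$ extends by zero to an element of $H_0^1(S)$ of the same $H^1$-norm, the Sobolev estimates of the compact ambient surface $S$ apply with $F$-independent constants, and, together with $\lambda_0(F)\le\Lambda(S)+1$, they yield $M\le M_0$.

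The main step is to show a uniform lower bound $d(x_0,\partial F)\ge r_0$ by a blow-up contradiction. Along a hypothetical sequence $(F_n,\vf_n)$ with $d_n:=d(x_0^{(n)},\partial F_n)\to 0$, pull back by $\exp_{x_0^{(n)}}$ and rescale the metric by $d_n^{-2}$; the rescaled function $\tilde\psi_n(y):=\psi_n(\exp_{x_0^{(n)}}(d_ny))$ then satisfies $\tilde\Delta_n\tilde\psi_n + d_n^2\lambda_0(F_n)\tilde\psi_n = 0$ on a rescaled domain $\tilde F_n \supset B(0,1)$, with $\tilde\psi_n(0)=1=\sup\tilde\psi_n$, $\tilde\psi_n|_{\partial \tilde F_n}=0$, and a point $\tilde p_n\in\partial\tilde F_n$ with $|\tilde p_n|=1$. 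The rescaled metrics $\tilde g_n$ converge to the Euclidean metric on compacts of $\R^2$ and $d_n^2\lambda_0(F_n)\to 0$, so interior elliptic regularity furnishes a subsequential $C^{2,\alpha}_{\mathrm{loc}}$-limit $\tilde\psi_\infty$ which is a non-negative harmonic function on a limiting open set $U\supset B(0,1)$ with $\tilde\psi_\infty(0)=1=\sup\tilde\psi_\infty$. The strong maximum principle forces $\tilde\psi_\infty\equiv 1$ on the connected component of $U$ containing $0$; passing to the limit in $\tilde\psi_n(\tilde p_n)=0$ via a uniform boundary H\"older estimate for Dirichlet solutions (for instance, Krylov--Safonov-type estimates depending only on the ellipticity constants and not on the smoothness of $\partial\tilde F_n$) produces $\tilde\psi_\infty(\tilde p_\infty)=0$ at the limit boundary point $\tilde p_\infty\in\partial U$ with $|\tilde p_\infty|=1$, contradicting the constant value $1$ on the component of $0$.

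Once $d(x_0,\partial F)\ge r_0$, the ball $B(x_0,r_0)\subset F$, and the scale-invariant Harnack inequality for positive solutions of $(\Delta+h)u=0$ with $\|h\|_\infty\le\Lambda(S)+1$ on $B(x_0,r_0)$ gives $\inf_{B(x_0,r_0/2)}\psi\ge c_H\psi(x_0) = c_H$, with $c_H$ depending only on $r_0$, $\Lambda(S)+1$, and the geometry of $S$. Hence $\vf\ge c_H M\ge c_H m_0$ on $B(x_0,r_0/2)$, and one may take $\rho:=r_0/2$ and $\ve_0:=(c_Hm_0)^2$. The main obstacle is the blow-up argument: since the geodesic curvature of $\partial F_n$ is not a priori controlled, the rescaled boundaries can develop irregularities, and the challenge is to pin down the value $0$ of the harmonic limit at $\tilde p_\infty$ in a way that does not rely on boundary smoothness beyond the ellipticity constants of the equation.
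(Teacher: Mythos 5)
Your approach is genuinely different from the paper's. The paper's proof of \cref{cheet} proceeds via the coarea formula and isoperimetric inequalities for disks, annuli, and M\"obius bands, after first noting that the superlevel sets $F_t=\{\vf^2\ge t\}$ inherit the topological type of $F$: by the maximum principle, no contractible loop in $F_t$ can bound a disk in $F\setminus F_t$. This runs the standard Cheeger-type coarea computation, but replaces the Cheeger constant by the sharper isoperimetric information available because the topology of each $F_t$ is controlled, and it directly delivers a lower bound on the inradius of a superlevel set. Your route via the maximum point $x_0$, the two-sided $L^\infty$ bounds $m_0\le M\le M_0$ (lower from $\int_F\vf^2=1$ and $|F|\le|S|$, upper from Moser iteration), and a Harnack estimate on a fixed ball around $x_0$ is a reasonable and quite different strategy; the $L^\infty$ steps and the Harnack step are fine.

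The gap is where you suspect it: the blow-up argument for $d(x_0,\partial F)\ge r_0$ does not close. After rescaling by $d_n^{-1}$, the piece of $\partial F_n$ nearest $x_0^{(n)}$ may collapse to a set of zero logarithmic capacity; for instance, if $F_n$ is an annulus with one boundary circle of length $o(d_n)$ at distance $d_n$ from $x_0^{(n)}$, that circle shrinks to a point in the rescaled limit. In that case the limiting domain is $\R^2$ (or $\R^2$ minus a polar set), the bounded harmonic limit $\tilde\psi_\infty$ extends across the vanishing boundary, and no contradiction with ``$\tilde\psi_\infty(\tilde p_\infty)=0$'' is obtained. Krylov--Safonov-type boundary estimates require a uniform thickness of the complement (a corkscrew or capacity-density condition), which is exactly what fails here; interior H\"older regularity alone cannot pin the boundary value.

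The missing step can, however, be proved directly, without blow-up, using the bounds you already have. Let $d_0=d(x_0,\partial F)$ and assume $d_0\le1$. On $B(x_0,d_0)\subset F$ the interior gradient estimate for $\Delta\vf=\lambda_0(F)\vf$, with constant uniform in $\lambda_0(F)\le\Lambda(S)+1$ and in the (fixed, compact) geometry of $S$, gives $|\nabla\vf|\le C_1M/d_0$ on $B(x_0,d_0/2)$. Hence $\vf\ge M/2$ on $B(x_0,c\,d_0)$ for a universal $c>0$, and
\begin{equation*}
1=\int_F\vf^2\ \ge\ \Bigl(\frac{M}{2}\Bigr)^2\bigl|B(x_0,c\,d_0)\bigr|\ \ge\ c' M^2 d_0^2,
\end{equation*}
so $d_0\ge c''/M\ge c''/M_0=:r_0$. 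With this replacement, your Harnack argument completes the proof, and the overall strategy stands as a valid alternative to the isoperimetric approach in \cite{BMM3}.
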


\begin{proof}[Sketch of proof]
The proof is based on isoperimetric inequalities for such domains (cf.\,\cite[Corollary 2.2]{BMM3}) and the monotonicity of the topology of superlevel sets (cf.\,\cite[Proposition 5.2]{BMM3}.
The used isoperimetric inequalities are suitable reformulations of classical ones.
The monotonicity of the topology of superlevel sets is a direct consequence of the maximum principle:
Any simple contractible loop $\gamma \subset F$ that is contained in $F_t=\{\varphi^2 \geq t\}$ has to be contractible in $F_t$.
Otherwise $\varphi$ would have a local minimum in the disk filling $\gamma$, which
contradicts the maximum principle.

Given this observation, one can invoke the coarea formula exactly as in the proof of the Cheeger inequality.
Instead of estimating from below by the Cheeger constant, the monotonicity of the topology allows us
 to use the isoperimetric inequalities, which give much more precise information.
\end{proof}
 
Let us write $\Lambda_D(S)=\inf_\Omega \lambda_0(\Omega),$ where the infimum runs through
all closed disks with piecewise smooth boundary in $S.$
Similarly, we write $\Lambda_A(S)$ and $\Lambda_{M}(S)$ in the case of annuli and M\"obius bands.
Then
\begin{equation*}
  \Lambda(S) = \inf\{\Lambda_D(S),\Lambda_A(S)\Lambda_M(S)\},
\end{equation*}
and we treat the three terms on the right hand side consecutively.

\subsubsection{Disks: $\Lambda_D(S)>\lambda_0(\tilde S)$}
(Cf.\,\cite[Theorem 7.2]{BMM3}.)
The proof relies on \cref{cheet}.
We argue by contradiction and assume that we have a sequence $D_i$ of disks in $S$ with $\lambda_0(D_i) \to \lambda_0(\tilde S).$

By the compactness of $S$ and \cref{cheet}, we can choose a (not relabeled) subsequence such that 
\begin{equation*}
B(x,\rho/2) \subset \{\varphi_i^2 \geq \ve_0\}
\end{equation*}
for some fixed ball $B(x,\rho/2).$
We lift the disks $D_i$ to disks $\tilde D_i\subset\tilde S$ such that 
\begin{equation*}
B(\tilde x,\rho/2) \subset \{\tilde \varphi_i^2 \geq \ve_0\}
\end{equation*}
for some fixed point $\tilde x$ over $x$.
We lift the ground states $\vf_i$ of $D_i$ to the ground states $\tilde \vf_i$ on $\tilde D_i$
and extend $\tilde\vf_i$ by zero to $S\setminus\tilde D_i$.

By extracting further subsequences if necessary, we have weak convergence $\tilde \varphi_i \to \tilde \varphi$ in $W^{1,2}(\tilde S)$
and, by standard elliptic estimates, $\tilde \varphi_i \to \tilde \varphi$ in $C^\infty(B(\tilde x, \rho/2)).$
In particular, 
\begin{equation} \label{low_bd}
\tilde\varphi^2 \geq \ve_0 \ \ \text{on}\ \ B(\tilde x, \rho/2).
\end{equation}
This rules out the worst case scenario $\tilde \varphi = 0.$
Moreover, by estimating the Rayleigh quotients of $\tilde \varphi_i$ on carefully chosen balls $B(\tilde x, r)$ with $r \to \infty$,
\eqref{low_bd} allows us to show that $R(\tilde \varphi)\leq \lambda_0(\tilde S).$

The idea behind this step is as follows: If $\supp \tilde \varphi_i \subset K$ for
some compact subset $K \subset \tilde S$,
the compact Sobolev embedding applies and we are done.
So assume that the supports of the $\tilde \varphi_i$ leave every compact set eventually.
Now remember that $B(\tilde x,\rho/2) \subset \supp \tilde \varphi_i$ and $|\supp \tilde \varphi_i| \leq |S|$.
Therefore, the intersection of $\partial B(\tilde x,r) \cap \supp \tilde \varphi_i$
has to be very short for many large $r$ and uniformly for a subsequence of $i$; compare with Figure \ref{figu}.
The precise argument is actually slightly different, but more difficult to picture.
The bound on $|\supp \tilde \varphi_i|$ is replaced by the bound $\|\tilde\varphi_i\|_{W^{1,2}}\leq 1+2\Lambda(S)$ and a counting argument gives the existence of the radii $r$.
By construction, the boundary term 
$\int_{\partial B(\tilde x,r)}  \tilde \varphi_i \langle \nabla \tilde \varphi_i, \nu \rangle$
coming from integrating by parts on $B(\tilde x, r)$ will be small, which allows to get good control on the Rayleigh quotient.

\begin{figure}
\includegraphics[width=11cm]{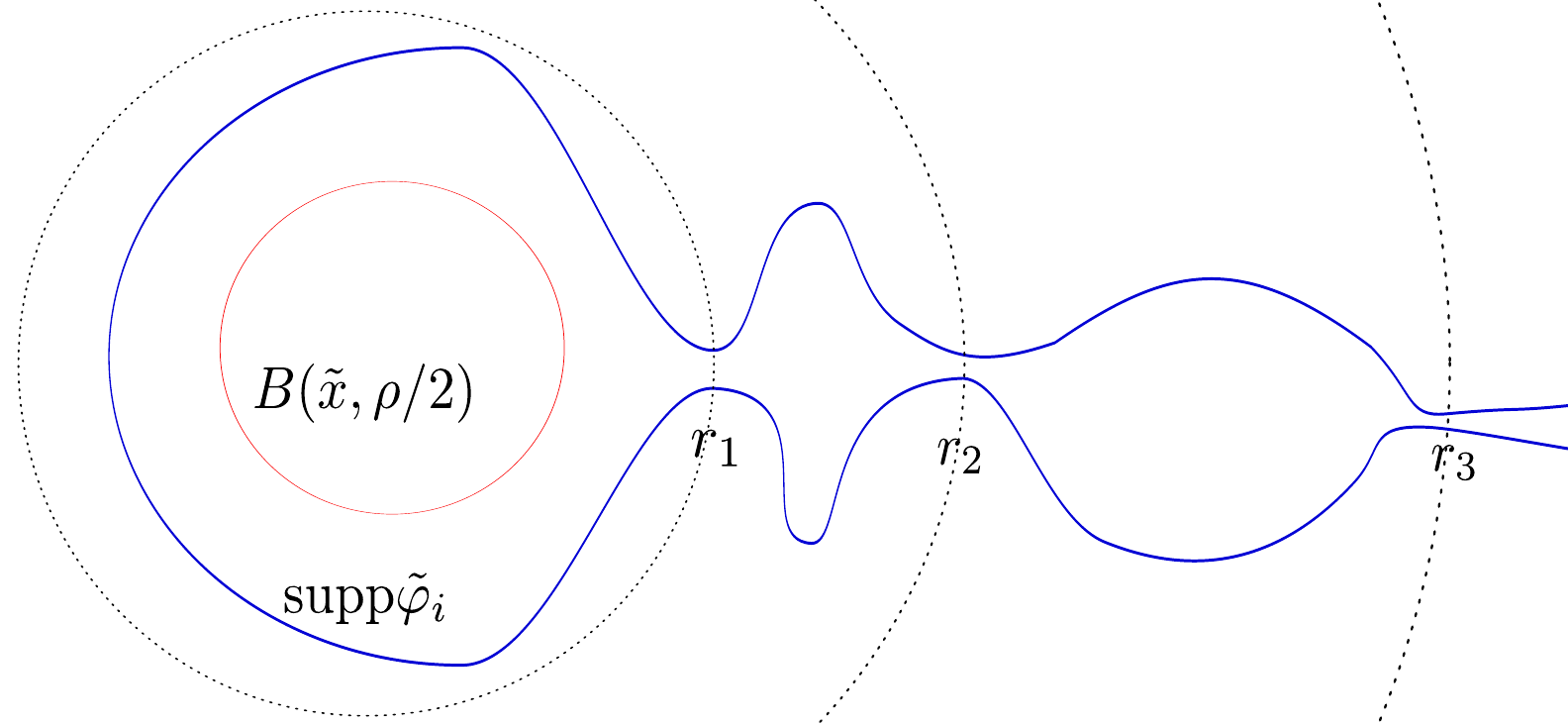}
\caption{The cutting radii $r_n$}\label{figu}
\end{figure}

The bound $R(\tilde \varphi)\leq \lambda_0(\tilde S)$ implies that $\tilde\vf$ is an $L^2$-eigenfunction of the Laplacian on $\tilde S$ with eigenvalue $\lambda_0(\tilde S)$.
Now $S$ is not a sphere, hence there is a point different from $\tilde x$ in the fiber over $x$.
By construction, $\tilde \varphi$ vanishes in the $\rho/2$ ball centered at this point, contradicting the unique continuation principle.
 
\subsubsection{Annuli 1: isotopy types}
For annuli we want to invoke a similar strategy.
Instead of lifting to the universal covering, we use cyclic subcovers.
A problem that we have to face is that this might be not the same subcover for different annuli in competition.

This is taken care of by the following comparison result.

\begin{lem}[cf.\ Lemma 5.3 in \cite{BMM3}]\label{lambdad}
If $F \subset S$ is a compact annulus, and $l$ denotes the length of a shortest non-contractible closed curve in $F$, then
\begin{equation*}
  \lambda_0(F)\ge \big\{1-\delta
  + 2\big(1-\frac1\delta\big)\frac{|F|}{\ell}\sqrt{\lambda_0(F)}\big\}\Lambda_D(S)
\end{equation*}
for all $0<\delta<1/2$.
\end{lem}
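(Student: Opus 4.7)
The plan is to multiply the Dirichlet ground state of $F$ by a cutoff vanishing along an arc across $F$, observe that the result is supported in a disk of $S$, and compare Rayleigh quotients. Let $\varphi$ be the positive, $L^2$-normalized Dirichlet ground state of $F$, so that $\Delta\varphi = \lambda\varphi$ with $\lambda := \lambda_0(F)$ and $\varphi|_{\partial F} = 0$. One may assume $\lambda < \Lambda := \Lambda_D(S)$, since otherwise the bracket in the statement is at most $1-\delta<1$ (its second term being non-positive for $\delta \in (0,1)$) and the inequality is immediate.

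First, I would fix an arc $\alpha \subset F$ joining the two boundary circles so that, for a small regular $\epsilon>0$, the set $F \setminus N_\epsilon(\alpha)$ is a closed disk in $S$ with piecewise smooth boundary. Take a Lipschitz cutoff $\chi$ that vanishes on $\alpha$, equals $1$ off $N_\epsilon(\alpha)$, and satisfies $|\nabla\chi| \le 1/\epsilon$. Then $\chi\varphi \in H^1_0$ of that disk, so by definition of $\Lambda_D(S)$,
\[
\Lambda\int_F \chi^2\varphi^2 \le \int_F |\nabla(\chi\varphi)|^2.
\]
Using $\Delta\varphi=\lambda\varphi$ and $\varphi|_{\partial F}=0$, integration by parts gives the exact identity $\int_F |\nabla(\chi\varphi)|^2 = \lambda\int_F \chi^2\varphi^2 + \int_F |\nabla\chi|^2\varphi^2$, which on subtracting yields the core inequality
\[
(\Lambda-\lambda)\int_F\chi^2\varphi^2 \le \int_F|\nabla\chi|^2\varphi^2 \le \frac{1}{\epsilon^2}\int_{N_\epsilon(\alpha)}\varphi^2.
\]

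Next, I would derive a tube estimate for $\mu:=\int_{N_\epsilon(\alpha)}\varphi^2$. In Fermi coordinates $(s,t)$ transverse to $\alpha$, the fundamental theorem of calculus gives $\varphi(s,t)^2-\varphi(s,0)^2=2\int_0^t\varphi\,\partial_{t'}\varphi\,dt'$; combined with the Cauchy--Schwarz bound $\int_{N_\epsilon}\varphi\,|\partial_t\varphi|\le\mu^{1/2}\sqrt{\lambda}$ this produces an implicit inequality of the shape
\[
\mu\le 2\epsilon\int_\alpha\varphi^2\,d\mathcal{H}^1+C\,\epsilon\sqrt{\lambda\mu}.
\]
The systolic hypothesis enters when choosing $\alpha$ itself: picking a degree-one $\tau\colon F\to\mathbb{R}/\ell\mathbb{Z}$ adapted to a shortest non-contractible curve (for instance via Fermi coordinates around it), the coarea formulas $\int_0^\ell |\tau^{-1}(s)|\,ds=\int_F|\nabla\tau|\,dA$ and $\int_0^\ell\int_{\tau^{-1}(s)}\varphi^2\,ds=\int_F\varphi^2|\nabla\tau|\,dA$ let me select a regular value $s_0$ for which $\alpha:=\tau^{-1}(s_0)$ simultaneously satisfies $\int_\alpha\varphi^2\lesssim 1/\ell$ and $|\alpha|\lesssim |F|/\ell$.

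Finally, I would balance parameters. Inserting the tube bound into the core inequality and using a Young-type inequality with parameter $\delta\in(0,1/2)$ to split $\sqrt{\lambda\mu}$ into a $\delta\Lambda$-piece and a $(|F|/\ell)\sqrt{\lambda}\,\Lambda$-piece, I expect to rearrange to exactly the bracketed form $\{1-\delta+2(1-1/\delta)(|F|/\ell)\sqrt{\lambda}\}\Lambda$. The principal obstacle is the geometric step: simultaneously controlling $\int_\alpha\varphi^2$ and the width $|\alpha|\lesssim|F|/\ell$ in terms of the single quantity $|F|/\ell$ forces the use of a $\tau$ built from the systolic curve rather than an arbitrary foliation, so that the systole $\ell$ truly translates into an average-width bound for the annulus $F$.
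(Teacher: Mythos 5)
Your plan replaces the paper's mechanism by a quite different one, and as written it has a gap that I do not think can be closed along the lines you indicate. The paper's proof (sketched after the statement) relies on \emph{critical point theory for the ground state}: since $\chi(F)=0$ and the ground state $\varphi$ has an interior maximum and no interior minimum, it must have a saddle. Let $t_0$ be the largest level above which the superlevel sets $\{\varphi>t\}$ consist only of components that are contractible in $F$ (hence closed disks in $S$). Then $(\varphi-t)_+$ for $t\geq t_0$ is an admissible test function for $\Lambda_D(S)$ with \emph{no artificial cutoff at all}: its gradient is $\nabla\varphi\,\mathbf{1}_{\{\varphi>t\}}$, and $\int|\nabla(\varphi-t)_+|^2\le\int_F|\nabla\varphi|^2=\lambda_0(F)$. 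The systole enters through the coarea formula applied to level sets rather than to a transverse foliation: for regular $t<t_0$ the level $\{\varphi=t\}$ contains a circle that is non-contractible in $F$ and hence of length $\ge\ell$, so $\ell\, t_0\le\int_F|\nabla\varphi|\le\sqrt{|F|\,\lambda_0(F)}$, i.e.\ $t_0\le|F|^{1/2}\sqrt{\lambda_0(F)}/\ell$; combined with $\int_F(\varphi-t_0)_+^2\ge 1-2t_0\int_F\varphi\ge 1-2t_0|F|^{1/2}$ this is exactly where the factor $(|F|/\ell)\sqrt{\lambda_0(F)}$ comes from.

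The gap in your version is in the cutoff step. You set $\chi=\min(1,d(\cdot,\alpha)/\epsilon)$ along a transverse arc $\alpha$ and obtain, correctly,
\begin{equation*}
(\Lambda_D-\lambda)\int_F\chi^2\varphi^2\le\int_F|\nabla\chi|^2\varphi^2\le\frac{1}{\epsilon^2}\int_{N_\epsilon(\alpha)}\varphi^2.
\end{equation*}
But $\alpha$ runs through the \emph{interior} of $F$, where the ground state is strictly positive, so $\int_{N_\epsilon(\alpha)}\varphi^2\sim 2\epsilon\int_\alpha\varphi^2$ as $\epsilon\to 0$ and the right side blows up like $2\epsilon^{-1}\int_\alpha\varphi^2$. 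The only way to make it decay is to force $\varphi$ to be small on $\alpha$, which your transverse arc does not do; your own tube estimate confirms this, since the leading term $2\epsilon\int_\alpha\varphi^2$ dominates for small $\epsilon$. Keeping $\epsilon$ bounded away from $0$ does not help either: the Fermi-coordinate parametrization and the Jacobian normalization behind the tube estimate require $\epsilon$ to be smaller than the focal radius of $\alpha$, a quantity that is not controlled by $|F|$, $\ell$, or $\lambda_0(F)$. In addition, your mysterious appearance of $\Lambda_D$ inside the proposed Young splitting of $\sqrt{\lambda\mu}$ has no source: $\Lambda_D$ enters your computation only on the left of the inequality, so it cannot reappear as a coefficient on the right. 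A secondary issue is the map $\tau$: Fermi coordinates about a shortest non-contractible curve only give a $1$-Lipschitz circle-valued map on a collar, not on all of $F$; to run the coarea argument over all of $F$ you need a $1$-Lipschitz degree-one map $\tau\colon F\to\mathbb{R}/\ell\mathbb{Z}$, which must be constructed (it is not immediate from the hypothesis that the systole of $F$ is $\ell$). In short, what is missing is precisely the device that critical point theory supplies in the paper: a natural truncation of $\varphi$ whose support already sits inside disks, so that the gradient of the test function is bounded by $\nabla\varphi$ itself rather than by $\epsilon^{-1}$.
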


The proof of \cref{lambdad} relies on critical point theory for the ground state of $F$.

Since $S$ is compact, $|F|\leq |S|\leq C$.
For annuli with $\lambda_0(F) \leq \Lambda(S)+1$, the lemma above then implies that 
\begin{equation}
\lambda_0(F)\geq (1-2\delta)\Lambda_D(S)
\end{equation}
for $\ell$ large enough.
Since we already have $\Lambda_D(S)>\lambda_0(\tilde S),$ we can choose $\delta$ such that
$(1-2\delta)\Lambda_D(S)>(1+\delta)\lambda_0(\tilde S)$.
This then implies that only annuli with $\ell$ not too large can have $\lambda_0$ close to $\lambda_0(\tilde S).$
Finally, observe that there are only finitely many isotopy types with bounded $\ell$.
Thus, by extracting a subsequence if necessary, we may assume that all the annuli $A_i$ involved in a sequence with $\lambda_0(A_i)\to\lambda_0(\tilde S)$ belong to the same isotopy type.

\subsubsection{Annuli 2: $\Lambda_A(S)>\lambda_0(\tilde S)$} (Cf. \cite[Theorem 7.3]{BMM3}.)
Suppose that there is a sequence of annuli $A_i$ with $\lambda_0(A_i) \to \lambda_0(\tilde S)$.
By \cref{cheet}, we may assume that we have a fixed ball 
\begin{equation*}
B(x,\rho/2) \subset \{ \varphi_i^2 \geq \ve_0\}.
\end{equation*}
As explained above, \cref{lambdad} allows us to assume that all $A_i$ are isotopic.
Therefore, there is a cyclic subcover $\hat S$ of $\tilde S$ such that we can find lifts
$\hat A_i \subset \hat S$ of the $A_i.$
In contrast to the case of disks we are not free to choose these lifts, since the covering
$\hat S \to S$ is not normal. 
As before, we also lift the corresponding ground states $\varphi_i$ to functions $\hat \varphi_i$.
Then there is a sequence of balls $B(\hat x_i,\rho/2) \subset \{\hat\varphi_i^2 \geq \ve_0\}$.

We distinguish two cases.

\textsc{Case 1:} 
\textit{The sequence of $\hat x_i$ remains in a bounded set.}
\\
This allows us to repeat the argument, that we used for the case of disks.

\textsc{Case 2:} 
\textit{Up to passing to a subsequence, we have $\hat x_i \to \infty$.}
\\
This case is indicated in Figure \ref{fig}.
The metric on the cylinder is the pullback of the hyperbolic metric conformal to the
original metric on $S$.

\begin{figure}
\includegraphics[width=11cm]{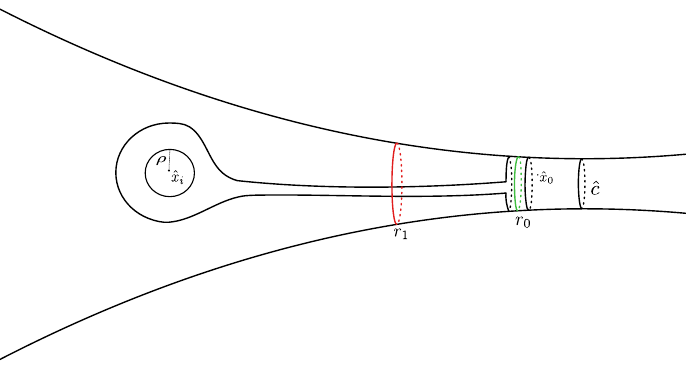}
\caption{The case $\hat x_i \to \infty$}\label{fig}
\end{figure}
 
Since the systole of the annuli $A_i$ is bounded, at least one boundary component of $\hat A_i$ has to lie within bounded distance to the closed geodesic $\hat c$ (indicated by the curve at distance $r_0$).
Since the area of $A_i$ is bounded, the entire second boundary component can not be too far away from $\hat c$ (indicated by the radius $r_1$).
Therefore, outside a fixed compact set $\hat A_i$ consists only of contractible components.

Let $\chi$ be a cut off that is $1$ on $B(\hat x_i,\rho)$ and vanishes in the compact set bounded by $r_1$.
By an integration by parts argument, we get that
\begin{equation*}
\int_{\hat S} |\nabla (\chi \hat \varphi_i)|^2 \leq \lambda_0(A_i) \int_{\hat S} | \chi \hat \varphi_i|^2 + C \delta,
\end{equation*}
where $\delta$ depends on $\chi$ and can be made small for $i$ large (since $\hat x_i \to \infty$).
Note that the inradius estimate implies in particular that
\begin{equation*}
\int_{B(\hat x_i,\rho)} \hat \varphi_i^2 \geq \ve_0 |B(x,\rho)| \geq C \ve_0 \rho^2.
\end{equation*}
Therefore,
\begin{equation*}
\Lambda_D(S) \leq R(\chi \hat \varphi) \leq \lambda_0(A_i) + C  \ve_0^{-1} \rho^{-2} \delta,
\end{equation*}
which contradicts $\lambda_0(A_i) \to \lambda_0(\tilde S) < \Lambda_D(S)$ for $i$ large (and thus $\delta$ small).

\begin{rem}
It is very interesting to observe that the proof crucially relies in two different ways 
on showing $\Lambda_D(S)>\lambda_0(\tilde S)$ first.
On one hand, this allows us to control the number of isotopy classes of annuli in competition.
On the other hand, it is used to rule out the second case from above for the lifted annuli.
\end{rem}

\subsubsection{M\"obius bands}

Showing $\Lambda_M(S)>\lambda_0(\tilde S)$ is now trivial thanks to the hard work we have done up to this point.
Let $S_o\to S$ be the orientation covering of $S$.
Any M\"obius band $M\subset S$ lifts to an annulus $A \subset S_o$.
Then
\begin{equation*}
\lambda_0(M) \geq \lambda_0(A) \geq \Lambda_A(S_o) 
\end{equation*}
which implies
\begin{equation*}
\Lambda_M(S) \geq \Lambda_A(S_o) > \lambda_0(\tilde S).
\end{equation*}
This finishes the proof of \cref{ana_sys_thm}.



\end{document}